\providecommand{\texorpdfstring}[2]{#1}
\newcommand{\tensor}{\otimes}
\newcommand{\isom}{\cong}
\newcommand{\C}{\mathbb{C}}
\newcommand{\Q}{\mathbb{Q}}
\newcommand{\Z}{\mathbb{Z}}
\renewcommand{\P}{\mathbb{P}}
\newcommand{\M}{\mathcal{M}}
\newcommand{\D}{\mathcal{D}}
\newcommand{\F}{\mathcal{F}}
\newcommand{\G}{\mathcal{G}}
\renewcommand{\H}{\mathcal{H}}
\renewcommand{\O}{\mathcal{O}}
\newcommand{\DR}{\mathrm{DR}}
\renewcommand{\L}{\mathcal{L}}
\newcommand{\IC}{\mathcal{IC}}
\newcommand{\w}{\omega}
\newcommand{\supp}{\mathrm{supp\,}}
\newcommand{\gr}{\mathrm{gr}}
\numberwithin{equation}{section}
\theoremstyle{plain}
\newtheorem{thm}[equation]{Theorem}
\newtheorem{cor}[equation]{Corollary}
\newtheorem{lem}[equation]{Lemma}
\newtheorem{prop}[equation]{Proposition}
\newtheorem{conj}[equation]{Conjecture}
\theoremstyle{definition}
\newtheorem{defn}[equation]{Definition}
\theoremstyle{remark}
\newtheorem{rmk}[equation]{Remark}
\newtheorem{ex}[equation]{Example}
\begin{document}

\title[Viehweg hyperbolicity for Whitney equisingular families]{Viehweg hyperbolicity for Whitney equisingular families with Gorenstein rational singularities}
\author{Sung Gi Park}
\address{Department of Mathematics, Harvard University, 1 Oxford Street, Cambridge, MA 02138, USA}
\email{sgpark@math.harvard.edu}

\date{\today}

\begin{abstract}
We prove the analogue of Viehweg's hyperbolicity conjecture for Whitney equisingular families of projective varieties with Gorenstein rational singularities whose geometric generic fiber has a good minimal model. Namely, for such families with maximal variation, the base spaces are of log general type. The main new ingredient is the use of intersection complexes as Hodge modules in the construction of logarithmic Higgs sheaves by Viehweg-Zuo and Popa-Schnell. This construction suggests an equisingular stratification of the moduli space of varieties of general type, with each stratum being hyperbolic, and our result is a first step in this direction.
\end{abstract}

\maketitle

\tableofcontents

\section{Introduction}
\label{sec:intro}

Throughout the paper, a variety is a reduced connected separated scheme of finite type over \(\C\).

The goal of this paper is to establish Viehweg's hyperbolicity conjecture for families of singular varieties satisfying certain equisingularity conditions. Viehweg originally conjectured that the base of a family of canonically polarized complex manifolds of maximal variation is of log general type; this vastly generalizes Shafarevich's hyperbolicity conjecture on a non-isotrivial family of curves over a one dimensional base. Viehweg's statement was proven by an exhaustive list of cumulative works, of which we name a few: Viehweg-Zuo \cite{VZ01} when the base is a curve, Kebekus-Kov\' acs \cites{KK08, KK10} when the base is a surface or a threefold, and Campana-P\u aun \cite{CP19} in full generality. Popa-Schnell \cite{PS17}*{Theorem A} later generalized it to families of varieties admitting good minimal models:

\begin{thm}[Viehweg's Hyperbolicity Conjecture]
\label{thm: Viehweg's hyperbolicity}
Let \(V\) be a smooth quasi-projective variety and \(f:U\to V\) be a smooth family of projective varieties. If the general fiber of \(f\) has a good minimal model and \(f\) has maximal variation, then \(V\) is of log general type.
\end{thm}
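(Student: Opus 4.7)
The plan is to follow the Viehweg--Zuo--Popa--Schnell strategy, whose three pillars are (i) a bigness statement for a direct-image sheaf that encodes maximal variation, (ii) the construction of a logarithmic Higgs bundle on a suitable compactification of $V$ whose first piece contains a big line bundle, and (iii) Campana--P\u aun's criterion that any pseudo-effective saturated subsheaf of $\Omega^1_Y(\log D)$ with big determinant forces $(Y,D)$ to be of log general type. First I would choose good compactifications $X \supset U$ and $Y \supset V$ such that $D := Y \setminus V$ is a reduced simple normal crossings divisor and $f$ extends to a projective morphism $\bar f \colon X \to Y$; after further birational modifications over $Y$ (standard semistable reduction after a generically finite base change) one may assume the geometric structure needed for the subsequent Hodge-theoretic input.

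Next I would invoke the positivity theorem: because the geometric generic fiber admits a good minimal model, for sufficiently divisible $m$ the relative pluricanonical sheaf $\bar f_* \w_{X/Y}^{\otimes m}$ (and its determinant) is weakly positive, and maximal variation promotes this to bigness. Concretely, on the smooth locus one produces a big line bundle $A$ sitting inside $\det \bar f_*\w_{X/Y}^{\otimes m}$; this step replaces the Kawamata/Viehweg argument used in the canonically polarized case and is where the good minimal model hypothesis enters most essentially.

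The central construction is then that of a Viehweg--Zuo--Popa--Schnell logarithmic Higgs bundle $(E,\theta) = \bigoplus_{p+q = n} (E^{p,q}, \theta^{p,q})$ on $(Y,D)$ with $\theta \colon E \to E \tensor \Omega^1_Y(\log D)$ of square zero, together with an inclusion $A \hookrightarrow E^{n,0}$ coming from the big direct image above. The Higgs bundle is extracted from the variation of Hodge structure on a branched cover of $X$ designed so that the top-degree Hodge piece contains the positive direct image; the iterated Kodaira--Spencer maps $\theta^{\circ k}$ then produce, for some $k \leq n$, a nonzero $\O_Y$-linear map $A \to \mathrm{Sym}^{k} \Omega^1_Y(\log D)$, because $\theta$ must eventually land in a zero Higgs field factor (the Higgs bundle being generically generated by $A$ via iterated brackets, one obtains a nonzero image in some symmetric power of the log cotangent sheaf). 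Saturating the image of $A$ in $\mathrm{Sym}^{k} \Omega^1_Y(\log D)$ yields a big rank-one subsheaf.

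Finally, I would apply Campana--P\u aun's theorem: the existence of a big line bundle inside some symmetric power of $\Omega^1_Y(\log D)$ implies that $K_Y + D$ is big, i.e.\ $V$ is of log general type. The main obstacle I anticipate is step two together with the nonvanishing in step three: establishing that the iterated Kodaira--Spencer composition does not die before producing the desired map into $\mathrm{Sym}^{k} \Omega^1_Y(\log D)$ requires a careful analysis of where the kernel of $\theta$ lies relative to $A$, and this is precisely where one uses that $A$ is \emph{big} (not merely nonzero) and that the Higgs bundle was built from a geometrically meaningful VHS carrying the variation of $f$. The bigness of the direct image under maximal variation, in the singular/good-minimal-model setting, is itself the outcome of a substantial Hodge-module argument that I would quote from Popa--Schnell rather than reprove.
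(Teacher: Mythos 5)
Your sketch is a faithful outline of the Popa--Schnell strategy, and the paper does not actually prove Theorem \ref{thm: Viehweg's hyperbolicity} itself --- it cites it directly from \cite{PS17}; the paper's own construction in Sections~\ref{sec: construction of Viehweg-Zuo sheaf}--\ref{sec: proof of the theorem} for the Whitney-equisingular generalization specializes to precisely this argument when the family is smooth (so that \(\IC_U^H = \Q_U^H[n]\)). The one spot where you are slightly imprecise is the nonvanishing of the iterated Kodaira--Spencer composition: the mechanism, quoted as Theorem~\ref{thm: Viehweg-Zuo sheaf from Higgs sheaf}, is that the \emph{dual of} \(\ker\theta\) is weakly positive (Zuo, Brunebarbe, Popa--Wu), which precludes the big line bundle \(A\) from being absorbed into the kernel before reaching some tensor power \(\Omega_Y(\log D)^{\tensor k}\) (the paper works with tensor, not symmetric, powers, though either suffices for Campana--P\u aun).
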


Allowing singularities that appear in the minimal model program, it is natural to ask for an analogue of Viehweg's conjecture for families of singular varieties. For instance, we start by asking whether the statement in Theorem \ref{thm: Viehweg's hyperbolicity} holds for a flat projective family \(f\) with canonical singularities. However, it is immediately clear that this fails; indeed, a Lefschetz pencil of hypersurfaces of sufficiently large degree is a non-isotrivial canonically polarized family of varieties with at most one ordinary double point - the mildest of higher dimensional singularities. To remedy this, we add an equisingularity condition for families of varieties, namely Whitney equisingularity, inspired by Teissier \cite{Teissier75}*{Section 1} who defined an equivalent notion in the case of isolated hypersurface singularities.

\begin{defn}
\label{defn: Whitney equisingularity}
Let \(f:U\to V\) be a morphism of varieties, with \(V\) smooth. We say \(f\)  is \textit{Whitney equisingular} if there exists a Whitney stratification \(U=\coprod_{\alpha\in S}U_\alpha\) such that \(f|_{U_\alpha}:U_\alpha \to V\) is smooth for all \(\alpha\in S\).
\end{defn}

Historically, the Whitney equisingularity condition appears in the context of differential topology: Thom's first isotopy lemma \cite{Thom69} proves that a proper Whitney equisingular morphism is a topological locally trivial fibration. See Mather \cite{Mather12}*{Proposition 11.1} for a modern treatment.

Furthermore, for families of isolated hypersurface singularities, Whitney equisingular morphisms were studied in depth by Teissier \cite{Teissier73} and Brian\c con-Speder \cite{BS76} using a numerical invariant called the Milnor sequence \(\mu^{(*)}\). For a germ \((X_0,x_0)\subset (\C^{n+1},0)\) of an isolated hypersurface singularity, Teissier introduced the Milnor sequence
\[
\mu^{(*)}_{x_0}(X_0)=\left(\mu^{(n+1)}_{x_0}(X_0),\dots, \mu^{(i)}_{x_0}(X_0),\dots, \mu^{(0)}_{x_0}(X_0)\right)
\]
where \(\mu^{(i)}_{x_0}(X_0)\) is the Milnor number of the intersection \((X_0\cap H, x_0)\) with a general plane \(H\) of dimension \(i\) through \(x_0\). In particular, we have \(\mu_{x_0}^{(n+1)}(X_0)=\mu_{x_0}(X_0)\) the ordinary Milnor number, \(\mu_{x_0}^{(1)}(X_0)=m_{x_0}(X_0)-1\) where \(m_{x_0}(X_0)\) is the multiplicity of \(X_0\) at \(x_0\), and \(\mu_{x_0}^{(0)}(X_0)=1\) by convention. Teissier proved that the germ of a flat deformation of isolated hypersurface singularities is Whitney equisingular if the Milnor sequence is constant, and Brian\c con-Speder proved the converse statement.

In our main result here, we verify the analogue of Viehweg's hyperbolicity conjecture for Whitney equisingular families of projective varieties with Gorenstein rational (or equivalently, Gorenstein canonical) singularities.

\begin{thm}
\label{thm: Viehweg hyperbolicity for Gorenstein rational singularities}
Let \(V\) be a smooth quasi-projective variety and \(f:U\to V\) be a flat Whitney equisingular family of projective varieties with Gorenstein rational singularities. If the general fiber of \(f\) has a good minimal model and \(f\) has maximal variation, then \(V\) is of log general type.
\end{thm}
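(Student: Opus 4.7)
The plan is to adapt the Viehweg-Zuo \cite{VZ01} / Popa-Schnell \cite{PS17} strategy for Theorem \ref{thm: Viehweg's hyperbolicity}: produce on a good compactification of $V$ a non-trivial logarithmic Higgs subsheaf of some symmetric power of the logarithmic cotangent bundle, and then invoke the Campana-P\u aun positivity criterion \cite{CP19} to deduce that $K_{\bar V} + D$ is big. The essential modification, signposted in the abstract, is that the singular fibers of $f$ obstruct direct use of the variation of Hodge structure on $R^n f_* \mathbb{Q}_U$; instead one takes as Hodge-theoretic input the direct image of the intersection complex $\IC_{\bar U}$ viewed as a pure Hodge module in the sense of Saito.

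For the setup, I would choose a smooth projective compactification $\bar V \supset V$ with simple normal crossings boundary $D = \bar V \setminus V$, and extend $f$ to a projective morphism $\bar f \colon \bar U \to \bar V$ (modifications of $\bar U$ over $D$ are acceptable; the Whitney stratification over $V$ is preserved). Over $V$, Whitney equisingularity and Thom's first isotopy lemma make $f$ a topological fiber bundle, so the intersection cohomology groups $IH^\bullet(U_y, \mathbb{Q})$ of the fibers assemble into a polarizable variation of Hodge structure on $V$, which is the restriction to $V$ of the generic summand of $\bar f_* \IC_{\bar U}$ supplied by Saito's decomposition theorem. The good minimal model hypothesis on the general fiber ensures that the period map of this VHS captures enough of the variation of the family to play the role of the moduli-of-canonical-models map used in \cite{PS17}.

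The heart of the argument is to extract from $\bar f_* \IC_{\bar U}$ a logarithmic Higgs sheaf on $(\bar V, D)$ whose lowest graded piece controls $\omega_{U_y}$. For a projective variety $X$ with Gorenstein rational singularities, the lowest nonzero piece of the Hodge filtration on $\IC_X$ is canonically isomorphic to $\omega_X$: this uses that rational plus Gorenstein is equivalent to Du Bois plus Gorenstein (Kov\'{a}cs-Schwede) together with the identification of the bottom of the Du Bois filtration with the bottom of the Hodge filtration on $\IC_X$ coming from Saito's theory. Propagating this fiberwise identification and invoking strictness of the direct image Hodge filtration, the lowest piece of $F_\bullet \bar f_* \IC_{\bar U}$ identifies with $\bar f_* \omega_{\bar U/\bar V}$ up to a twist. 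Taking $\gr^F$ with the residue of the Gauss-Manin connection along $D$, extracted from Saito's $V$-filtration, then yields a logarithmic Higgs sheaf $(\mathcal{E}, \theta)$ of the exact shape used in \cite{PS17}; maximal variation translates into generic non-degeneracy of the iterated Kodaira-Spencer maps $\theta^k \colon \mathcal{E} \to \mathcal{E} \otimes \mathrm{Sym}^k \Omega^1_{\bar V}(\log D)$, and \cite{CP19} then forces $K_{\bar V} + D$ to be big.

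The main obstacle I anticipate is Hodge-theoretic: verifying that the lowest Hodge filtration piece of $\bar f_* \IC_{\bar U}$ really is $\bar f_* \omega_{\bar U/\bar V}$ and that this identification is compatible with the logarithmic structure along $D$. This is precisely where the Gorenstein rational hypothesis becomes indispensable and where intersection complexes enter non-trivially, since $R^n \bar f_* \mathbb{Q}$ alone is too coarse to recover the canonical bundle once the fibers are singular. A secondary difficulty is the translation from maximal geometric variation to non-triviality of the period map on intersection cohomology, for which a local Torelli-type statement on the Whitney equisingular stratum, combined with deformation invariance of intersection Hodge numbers across such families, appears to be the correct ingredient.
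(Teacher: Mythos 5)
Your proposal correctly identifies the high-level tools---the intersection complex as a Hodge module, the identification $S(\IC_X^H)\cong\omega_X$ for Gorenstein rational singularities, Saito's Decomposition Theorem, logarithmic Higgs sheaves, and Campana--P\u aun---but it is missing the engine that makes the construction produce a Viehweg--Zuo sheaf, and it misattributes the role of several hypotheses.

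The most serious gap is the absence of the fiber-product trick and the cyclic cover. In the actual argument, one first fixes a line bundle $A$ on the compactification $Y$ with $A(-D)$ big, and then uses semistable reduction together with Viehweg's fiber-product trick (this is where the good-minimal-model and maximal-variation hypotheses enter, via weak positivity and bigness of $f_*\omega_{U/V}^{\otimes m}$, \emph{not} via a period map or Torelli statement) to produce an inclusion $A^{\otimes q}\hookrightarrow \tilde g_*\omega^{\otimes q}_{\widetilde X/Y}$ for a suitable fiber-power family $g\colon X\to Y$. This multi-pluricanonical statement is then converted to Hodge-theoretic data by taking a $q$-cyclic cover $\phi\colon Z\to\widetilde X$ along the corresponding section, giving maps $\L^{-1}\otimes\gr^F\DR(\Q^H_{\widetilde X}[n])\to\gr^F\DR(\phi_+\Q^H_Z[n])$; pushing forward to $Y$ and passing to the image yields a graded $\gr^F\D_Y$-submodule $\G_\bullet\subset\gr^F M$ whose lowest piece is $\omega_Y\otimes A$. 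Your plan---take $\gr^F$ of $\bar f_*\IC_{\bar U}$ directly and look at Kodaira--Spencer maps---produces a Higgs sheaf whose lowest piece is only $\bar f_*\omega_{\bar U/\bar V}$ and carries no a priori positivity; moreover, Campana--P\u aun as used here requires a \emph{big} coherent subsheaf of $\Omega_Y(\log D)^{\otimes N}$, so non-degeneracy of $\theta^k$ is not, by itself, enough to conclude.

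A second issue is that you have not articulated where Whitney equisingularity does its work. It is not merely a bookkeeping condition that ``the stratification over $V$ is preserved''; rather, it controls the characteristic variety of $\IC_U$ via Corollary \ref{cor: characteristic variety of intersection complex}, and hence forces the support of $\gr^F\DR_{U/V}(\IC_U^H)$ to lie on the zero section of $f^*T^*_V$ (Proposition \ref{prop: support of graded complex of IC}). This is exactly what guarantees, when one intersects the sub-Higgs-sheaf coming from $\IC_U^H$ over $V$ with the one coming from $\Q^H_{\widetilde X}[n]$ over $Y$, that the resulting sheaf is a \emph{logarithmic} Higgs sheaf with poles only along $D$ and not along the singular fibers inside $V$. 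One also needs that Whitney equisingularity is stable under fiber products (Proposition \ref{prop: Whitney equisingularity: fiber product}), since the fiber-product trick replaces $U$ by $U\times_V\cdots\times_V U$. Finally, the Gorenstein rational hypothesis is used twice: once to ensure $S(\IC_U^H)\cong\omega_U$ (via Saito's Proposition \ref{prop: lowest Hodge filtration of pushforward}, not via Du Bois theory), and once to ensure that the exceptional divisor over $U$ in a resolution of $X$ has coefficients divisible by $q$, so that the cyclic-cover line bundle restricts over $U$ to $\mu^*(\omega_{U/V}\otimes g^*A^{-1})$, matching the lowest pieces of the two Higgs sheaves being intersected.
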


Recently, Kov\'acs-Taji \cite{KT21}*{Theorem 1.3} obtained a version of Viehweg hyperbolicity for Gorenstein rational families under some positivity assumption on the pushforwards of pluri-canonical bundles. At the moment, we don't know how to apply this in the present context; this is an interesting problem.

For isolated hypersurface singularities, we can be more precise due to the numerical characterization of Whitney equisingularity by Teissier, Brian\c con and Speder, described in Theorem \ref{thm: Whitney equisingular, constant Milnor sequence}. Hence, we have the following:

\begin{cor}
\label{cor: Viehweg hyperbolicity for isolated hypersurface singularities}
Let \(f:U\to V\) be a flat family of projective varieties with at worst isolated rational hypersurface singularities. Assume that each nonzero Milnor sequence appears a fixed number of times on every fiber. If the general fiber of \(f\) has a good minimal model and \(f\) has maximal variation, then \(V\) is of log general type.
\end{cor}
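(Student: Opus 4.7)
The plan is to deduce the corollary from Theorem \ref{thm: Viehweg hyperbolicity for Gorenstein rational singularities} by upgrading the numerical hypothesis to the geometric one. Two things must be verified: that the fibers have Gorenstein rational singularities, and that $f$ is Whitney equisingular. The first comes for free, since an isolated hypersurface singularity is a local complete intersection, hence Gorenstein, and rationality is assumed.

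For Whitney equisingularity, the natural stratification of $U$ is indexed by the Milnor sequence. Let $\Sigma\subset U$ be the locus of points at which the ambient fiber is singular; since the fibers have only isolated singularities, $\Sigma\to V$ is a finite map. For each tuple $\mu$ of non-negative integers set
\[
\Sigma_\mu=\{u\in\Sigma\mid \mu^{(*)}_u(f^{-1}(f(u)))=\mu\}.
\]
By the upper semicontinuity of each $\mu^{(i)}$ in flat families of isolated hypersurface singularities (Teissier), $\Sigma_\mu$ is locally closed in $\Sigma$. The hypothesis that each nonzero Milnor sequence occurs a fixed number of times on every fiber says that the fiberwise cardinality of $\Sigma_\mu$ over $V$ is constant; combined with upper semicontinuity and the finiteness of $\Sigma\to V$, this forces each $\Sigma_\mu$ to be clopen in $\Sigma$, hence a disjoint union of connected components of $\Sigma$, étale over $V$. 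In particular each $\Sigma_\mu$ is smooth and $f|_{\Sigma_\mu}$ is smooth. I would then propose the stratification
\[
U=(U\setminus\Sigma)\sqcup\coprod_{\mu\neq 0}\Sigma_\mu,
\]
whose open stratum is smooth over $V$ because $f$ is flat with smooth fibers off $\Sigma$.

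The Whitney regularity of the pair $(U\setminus\Sigma,\Sigma_\mu)$ along $\Sigma_\mu$ is exactly the content of the Teissier--Brian\c con--Speder numerical criterion recalled in Theorem \ref{thm: Whitney equisingular, constant Milnor sequence}: the Milnor sequence is constant, equal to $\mu$, along the smooth section $\Sigma_\mu\to V$, and this constancy is equivalent to Whitney regularity of the germ. Since distinct $\Sigma_\mu$ and $\Sigma_{\mu'}$ are clopen components of the finite $V$-scheme $\Sigma$, their closures in $U$ are disjoint, so no regularity conditions between different singular strata need to be verified. The resulting stratification is therefore a Whitney stratification in the sense of Definition \ref{defn: Whitney equisingularity}, and Theorem \ref{thm: Viehweg hyperbolicity for Gorenstein rational singularities} then yields that $V$ is of log general type. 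The main obstacle is the bookkeeping in the second step: Teissier--Brian\c con--Speder is a statement about germs along a smooth section, and I need to ensure, using upper semicontinuity together with the constant-count hypothesis, that such smooth sections exist globally and cover $\Sigma$ — after which the Whitney property passes from germ to stratification.
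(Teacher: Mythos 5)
Your overall strategy is the same as the paper's: reduce Corollary~\ref{cor: Viehweg hyperbolicity for isolated hypersurface singularities} to Theorem~\ref{thm: Viehweg hyperbolicity for Gorenstein rational singularities} by showing that the constant-count hypothesis on Milnor sequences implies Whitney equisingularity, via the Teissier--Brian\c con--Speder criterion (this is exactly the content of the paper's Theorem~\ref{thm: Whitney equisingular, constant Milnor sequence}). The observation that isolated hypersurface singularities are automatically Gorenstein is correct.

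However, there is a genuine gap in the step where you claim that upper semicontinuity of $\mu^{(*)}$, finiteness of $\Sigma\to V$, and constant fiberwise cardinality ``force each $\Sigma_\mu$ to be clopen in $\Sigma$, hence \ldots\ \'etale over $V$.'' Neither conclusion follows from those three ingredients alone. Upper semicontinuity only yields that $\Sigma_\mu$ is locally closed, and a finite surjection of reduced varieties onto a smooth base with constant set-theoretic fiber cardinality need not be \'etale (it need not even be flat; and a connected component of $\Sigma$ could be singular or ramified). What is actually needed --- and what the paper invokes --- is Teissier's non-splitting principle (equivalently, conservation of the Milnor number): since each $\mu^{(*)}$ appears a fixed number of times, the \emph{sum} of all Milnor numbers over each fiber is constant, and Teissier's theorem then says precisely that the reduced critical locus $C_{f,\mathrm{red}}=\Sigma$ is locally a union of analytic sections of $f$, i.e.\ \'etale over $V$. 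Only after this is established does upper semicontinuity (now applied along each of these sections) give local constancy of the Milnor sequence, so that Theorem~\ref{thm: deformation milnor sequence Whitney} applies. So your argument is morally correct, but the crucial \'etaleness step cannot be waved through as a consequence of finiteness and semicontinuity; it is a nontrivial input of the theory of isolated hypersurface singularities, cited in the paper as \cite{Teissier75}*{Theorem 5, p.~615}.
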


Notice that the existence of a simultaneous resolution of \(f\) would imply Theorem \ref{thm: Viehweg hyperbolicity for Gorenstein rational singularities} and Corollary \ref{cor: Viehweg hyperbolicity for isolated hypersurface singularities}, due to Theorem \ref{thm: Viehweg's hyperbolicity}. In fact, this is the case for families of surfaces. For example, let \(f:U\to V\) be a flat family of projective surfaces with Du Val singularities and non-negative Kodaira dimension. Then, \(f\) is Whitney equisingular if and only if each type of \(ADE\)-singularities appears a fixed number of times on every fiber, in which case \(f\) admits a simultaneous resolution of singularities. Therefore, if additionally \(f\) has maximal variation, then \(V\) is of log general type. See Example \ref{ex: surface DV} for details.

As soon as we move to families of higher dimensional varieties (threefolds or higher), we are very far from knowing the existence of a simultaneous resolution of singularities. Teissier \cite{Teissier80}*{Questions 4.10.1} only predicted its existence for the analytic germ of a flat deformation of an isolated hypersurface singularity with constant Milnor sequence. Laufer \cites{Laufer83, Laufer87} answered this affirmatively in the case of surface singularities, but for threefolds or higher dimensional varieties, Teissier's question remains wide open. See Teissier's and Laufer's papers for precise statements on simultaneous resolutions.

It is already interesting to look at a flat family \(f:U\to V\) of projective threefolds with isolated compound Du Val (i.e. cDV) singularities and non-negative Kodaira dimension. Recall that the cDV singularities are rational hypersurface singularities classified by the types (\(cA,cD,cE\)). It is explained in Example \ref{ex: threefold cDV} that \(f\) is Whitney equisingular if each type of \(cA,cD,cE\)-singularities with a fixed embedded topological type appears a fixed number of times on every fiber.

For instance, let \(\bar f:X\to \P^1\) be a birationally non-isotrivial flat family of projective threefolds whose general fiber has non-negative Kodaira dimension and has exactly one \(cA_5\)-singularity of a fixed embedded topological type (i.e. a fixed Milnor number \cite{LR76}). In this case, we say a fiber is equisingular to the general fiber if it has one \(cA_5\)-singularity with the same embedded topological type. Then there exist at least three fibers, that are not equisingular to the general fiber. In Proposition \ref{prop: boundary example}, we give an example with exactly three non-equisingular fibers, which suggests that Theorem \ref{thm: Viehweg hyperbolicity for Gorenstein rational singularities} and Corollary \ref{cor: Viehweg hyperbolicity for isolated hypersurface singularities} are in some sense optimal.

\subsubsection*{\textnormal{\textbf{Interpretation via moduli theory.}}}

Let \(\M_h\) be the Deligne-Mumford moduli stack of canonically polarized manifolds with Hilbert polynomial \(h\) and \(M_h\) be its coarse moduli space. Viehweg's original conjecture is equivalent to the hyperbolicity of \(\M_h\) in a birational geometric sense: for every generically finite map \(V\to M_h\) induced by a family \(f:U\to V\) of canonically polarized complex manifolds, the base \(V\) is of log general type. Imposing Whitney equisingularity, we explain how this hyperbolicity property generalizes to the moduli space of varieties of general type (described for instance in Koll\'ar's book \cite{Kollar21}), or more precisely, the moduli space of KSB-stable varieties.

Due to the well-known hyperbolicity of \(\M_{g,n}\), it is easy to check that the moduli space \(\overline \M_g\) of stable curves of genus \(g\ge 2\) admits a stratification into hyperbolic strata defined by the equisingularity:
\[
\overline \M_g=\coprod_{\delta}\M_g^\delta
\]
where \(\M_g^\delta\) is the moduli space of stable curves of genus \(g\) with \(\delta\) nodes. We expect the same to hold for the moduli space of KSB-stable varieties. Corollary \ref{cor: Viehweg hyperbolicity for isolated hypersurface singularities} makes a first step in this direction, in the case of KSB-stable varieties with isolated rational hypersurface singularities as follows. We fix a multiset (i.e. a set of elements counted with multiplicities) of Milnor sequences \(\{\mu^{(*)}_s\}_{s\in S}\) with \(r:=|S|\) elements. Let \(\M_{n,v}^{\{\mu^{(*)}_s\}}\) be the moduli stack of canonically polarized varieties of fixed dimension \(n\) and volume \(v\) with \(r\) isolated rational hypersurface singularities, whose multiset of Milnor sequences is \(\{\mu^{(*)}_s\}\). By the upper semicontinuity of the Milnor numbers, \(\M_{n,v}^{\{\mu^{(*)}_s\}}\) is a locally closed substack of the moduli stack \(\M_{n,v}\) of KSB-stable varieties of dimension \(n\) and volume \(v\). Then Corollary \ref{cor: Viehweg hyperbolicity for isolated hypersurface singularities} implies that \(\M_{n,v}^{\{\mu^{(*)}_s\}}\) is hyperbolic (if nonempty) in a birational geometric sense. See Section \ref{subsec: numerical criteria} for details. 

In the hope that the above hyperbolicity property extends to every Whitney equisingular stratum of \(\M_{n,v}\), we propose the following conjecture for KSB-stable families, analogous to Theorem \ref{thm: Viehweg hyperbolicity for Gorenstein rational singularities}.

\begin{conj}
Let \(f:U\to V\) be a KSB-stable Whitney equisingular family. If \(f\) has maximal variation, then \(V\) is of log general type.
\end{conj}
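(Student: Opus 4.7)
The plan is to reduce the KSB-stable case to a logarithmic extension of Theorem \ref{thm: Viehweg hyperbolicity for Gorenstein rational singularities} via fiberwise normalization. Let \(\nu : U^\nu \to U\) be the normalization and \(D\subset U^\nu\) the reduced conductor divisor. For a KSB-stable family, normalization is compatible with base change (this is essentially the content of Koll\'ar's gluing theory for stable varieties), so the induced morphism \(f^\nu : U^\nu \to V\) has log canonical fibers \((U^\nu_t, D_t)\) with \(K_{U^\nu_t}+D_t\) being \(\Q\)-Cartier. Since Whitney stratifications pull back nicely along finite morphisms, \(f^\nu\) remains Whitney equisingular and the conductor \(D\) is a union of Whitney strata over \(V\).

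The next step is to construct a logarithmic Higgs sheaf on \(V\) attached to the log pair \((U^\nu, D)\to V\), in the spirit of the IC Hodge module construction that is used in the Gorenstein rational setting. Since semi-log canonical singularities are Du Bois (Koll\'ar-Kov\'acs) and since the log canonical sheaf \(\omega_{U^\nu_t}(D_t)\) takes over the role played by \(\omega_{U_t}\) in the normal Gorenstein case, one expects a variant of the Viehweg-Zuo-Popa-Schnell construction to go through. Concretely, pass to an index-one cyclic cover to make \(K_{U^\nu/V}+D\) Cartier, apply a simultaneous log resolution (whose existence for Whitney equisingular families is expected, generalizing Laufer's result for surfaces), and extract from the resulting variation of mixed Hodge structure a system of graded Higgs bundles on \(V\) whose Kodaira-Spencer-type maps carry the required positivity.

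To finish, one verifies that maximal variation of \(f\) implies maximal variation of \((f^\nu, D)\) together with the gluing data on \(D\). This uses Koll\'ar's reconstruction: a KSB-stable variety is determined by its normalization, the conductor, and an involution on the conductor compatible with the different, so any nontrivial infinitesimal deformation of \(f\) either deforms the pair \((U^\nu_t, D_t)\) or the involution, and in either case the logarithmic Higgs system detects the variation. The Campana-P\u aun style positivity argument of \cite{PS17}, applied to this Higgs system, then forces \(K_V+\partial V\) to be big, that is, \(V\) is of log general type.

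The chief obstacle lies in the middle step: constructing the logarithmic Higgs sheaf for genuinely slc (possibly non-normal, possibly non-Gorenstein) fibers. Log canonical singularities need not be rational, so the identification of \(\omega_{U^\nu_t}(D_t)\) with a graded piece of the Hodge filtration on the intersection cohomology Hodge module of \((U^\nu_t, D_t)\) is strictly more delicate than the Gorenstein rational identification used in the present paper. Overcoming this requires a careful Saito-Hodge-theoretic analysis of IC Hodge modules on log canonical pairs, together with a Whitney equisingular version of simultaneous log resolution for slc families; both of these appear to be substantive open problems in their own right.
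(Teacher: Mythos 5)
This statement appears in the paper only as a conjecture; there is no proof to compare against. The paper proves the corresponding result under the strictly stronger hypotheses that the fibers are normal with Gorenstein rational singularities (Theorem \ref{thm: Viehweg hyperbolicity for Gorenstein rational singularities}). Your proposal is therefore a research outline rather than a proof, and you are candid about the fact: the obstacles you flag in the middle step are essentially the very difficulties the introduction names as the reason the Gorenstein rational hypothesis is imposed in the first place.

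Beyond the gap you acknowledge, a few of the preliminary claims also need scrutiny. The assertion that ``Whitney stratifications pull back nicely along finite morphisms'' is not a general principle: the stability result used in the paper (Proposition \ref{prop: transversal pullback}) requires transversality, and the normalization \(\nu:U^\nu\to U\) is birational but not transversal to the stratification along the non-normal locus, so Whitney equisingularity of \(f\) does not automatically yield Whitney equisingularity of \(f^\nu\); this would need a separate argument. The appeal to ``a simultaneous log resolution (whose existence for Whitney equisingular families is expected, generalizing Laufer's result for surfaces)'' is precisely the Teissier question that the introduction stresses is wide open in dimension three and higher; Laufer's theorem is special to surfaces and no higher-dimensional analogue is available, so you cannot lean on it. Finally, the core difficulty you isolate is genuine and is the crux: Proposition \ref{prop: lowest Hodge filtration of intersection complex} gives \(S(\IC^H_U)\cong\omega_U\) precisely because rational singularities force \(\mu_*\omega_{\widetilde U}\cong\omega_U\), and for log canonical or slc fibers there is currently no identification of \(\omega_{U^\nu_t}(D_t)\) with the lowest graded piece of any natural Hodge module; the paper explicitly calls this ``a difficulty that we have not been able to overcome.'' Your sketch is a reasonable framing of the problem, but the steps you list as ``expected'' are themselves open problems, not a route around them, so the argument as written does not establish the conjecture.
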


\begin{rmk}
On a related note, it is natural to consider an Arakelov-type inequality for \(f\). For instance, Kov\'acs \cite{Kovacs02}*{Section 7} established this inequality for families of canonically polarized varieties with Gorenstein rational singularities over curves, which admit a simultaneous resolution of singularities. As a byproduct, Kov\'acs obtained \textit{weak boundedness} for such families. We expect a similar boundedness property to hold for KSB-stable Whitney equisingular families.
\end{rmk}

\subsubsection*{\textnormal{\textbf{What is new.}}}

Given a flat family \(f:U\to V\), let \(Y\) be a smooth compactification of the base \(V\) with boundary a simple normal crossing divisor \(D\). In the literature, the proof of Viehweg's hyperbolicity conjecture for families of smooth varieties appeals to the construction of a Viehweg-Zuo sheaf, i.e. a big line bundle contained in some large tensor power of the logarithmic cotangent bundle:
\[
A\subset \Omega_Y(\log D)^{\tensor N},
\]
obtained from a logarithmic Higgs sheaf on \(Y\) with poles along \(D\). Popa-Schnell \cite{PS17} refined the construction, originally due to Viehweg-Zuo \cites{VZ01, VZ02}, using the theory of polarizable Hodge modules.

The main new input of this paper is the use of \(\IC_U^H\), the polarizable Hodge module associated to the intersection complex of \(U\), in the construction of a Viehweg-Zuo sheaf. The major difference, hence difficulty, stems from \(U\) having singularities. For this to work through, we need two additional conditions.

One is Whitney equisingularity of \(f\). From the pushforward of a natural object associated to \(\IC_U^H\), we aim to construct a logarithmic Higgs sheaf on \(Y\) with poles along \(D\). In other words, its support restricted to the cotangent bundle of \(V\) should be the zero section. Hence, we require the characteristic variety of the intersection complex to be, in some sense, \textit{transversal} to \(f\). Therefore, we ask for \(f\) to be Whitney equisingular for this purpose, as explained in Section \ref{sec: intersection complex and Whitney equisingular morphism}.

The other is requiring the fibers to have Gorenstein rational singularities. In the construction of a Viehweg-Zuo sheaf from a logarithmic Higgs sheaf, the positivity of its first nonzero graded piece is crucial. Therefore, we need some control over the first nonzero term of the Hodge filtration of \(\IC_U^H\); when \(U\) has Gorenstein rational singularities, this is equal to the dualizing line bundle \(\w_U\). In a general setting, the absence of this property is a difficulty that we have not been able to overcome. See Section \ref{sec: Hodge module} for the details, along with the basics on Hodge modules.

Carefully putting these pieces together in the construction of a Viehweg-Zuo sheaf by Popa and Schnell, we obtain our main theorem \ref{thm: Viehweg hyperbolicity for Gorenstein rational singularities}.

\subsubsection*{\textnormal{\textbf{Acknowledgements.}}}

I would like to thank my advisor, Mihnea Popa, for suggesting this problem and for the helpful conversations. I also thank Joe Harris for the helpful discussions. I am supported by the 17th Kwanjeong Study Abroad Scholarship.

\section{Technicalities on Hodge modules}
\label{sec: Hodge module}

In this section, we mostly review the background on filtered D-modules and Saito's theory of Hodge modules, in order to integrate the intersection complex into the construction of a Viehweg-Zuo sheaf.

We follow notation and terminology in Saito \cites{Saito88, Saito90}. Let \(X\) be a smooth algebraic variety, and \(\D_X\) be the sheaf of differential operators on \(X\). Recall that \(\D_X\) is endowed with a canonical filtration \(F\), via the order of differential operators. Let \(MF(\D_X)\) be the category of filtered (right) \(\D_X\)-modules and let \(D^bF(\D_X)\) be the associated bounded derived category of filtered (right) \(\D_X\)-modules. Unless otherwise stated, we consider right D-modules in this section. In fact, the left-right transformation of filtered D-modules, i.e.
\[
\w_X\tensor (\bullet): MF(\D_X)_{\textrm{left}}\to MF(\D_X)_{\textrm{right}}, \quad
(N,F_\bullet)\mapsto (\omega_X\tensor N,\omega_X\tensor F_{\bullet+\dim X}),\]
recovers the theory in terms of left D-modules.

In Section \ref{sec: derived pushforward and associated graded}, we explain the basic functors of filtered D-modules. In Section \ref{sec: cyclic cover and hodge module}, the morphism of complexes induced by a cyclic cover (Viehweg-Zuo \cites{VZ01, VZ02} and Popa-Schnell \cite{PS17}) is stated in terms of Hodge modules. In Section \ref{sec: intersection complex}, we explain Saito's Decomposition Theorem and use it to study some important properties of the intersection complex as a Hodge module.

\subsection{Derived pushforward and associated graded functor}
\label{sec: derived pushforward and associated graded}

In order to achieve the construction of Higgs sheaves by Viehweg-Zuo in the generalized setting, not necessarily for families of smooth varieties, we discuss the derived pushforward and the associated graded functor for a complex of filtered D-modules. These standard functors appear in Saito's theory \cites{Saito88, Saito90}, but we make the statements more precise and formal, especially regarding the commutativity of the proper pushforward functor and the associated graded functor in Proposition \ref{prop: graded pushforward commuting diagram}. This is crucial when we are to incorporate the intersection complex later in the paper, whose associated graded complex generalizes relative differentials in the smooth case.

A priori, for a proper morphism \(f:X\to Y\) of smooth varieties, Saito constructed the derived pushforward \(f_+:D^bF(\D_X)\to D^bF(\D_Y)\) of derived categories of filtered D-modules as:
\[
f_+M^\cdot:=Rf_*(M^\cdot\tensor^L_{\D_X}\D_{X\to Y}), \quad \D_{X\to Y}:=\O_X\tensor_{f^{-1}\O_Y}f^{-1}\D_Y,
\]
where the \textit{transfer module} \(\D_{X\to Y}\) is a \((\D_X,f^{-1}\D_Y)\)-bimodule. More precisely, for a filtered right \(\D_X\)-module \((M,F)\) with the associated left \(\D_X\)-module \((N,F)\), we have the filtered Spencer resolution of filtered right \(\D_X\)-modules (c.f. \cite{HTT}*{Lemma 1.5.27} and \cite{Saito88}*{Lemme 2.1.6}):
\[
0\to (N\tensor_{\O_X}\Omega_X^0)\tensor_{\O_X}\D_X\to\cdots\to (N\tensor_{\O_X}\Omega_X^n)\tensor_{\O_X}\D_X\to M\to 0
\]
where
\[
F_k\left((N\tensor_{\O_X}\Omega_X^i)\tensor_{\O_X}\D_X\right):=\sum_{j} (F_{k+i-j}N\tensor_{\O_X}\Omega^i_X)\tensor_{\O_X} F_j\D_X.
\]
Hence, the derived tensor product \(M\tensor^L_{\D_X}\D_{X\to Y}\) is naturally constructed as the following complex:
\begin{equation}
\label{eqn: relative Spencer resolution}
0\to (N\tensor_{\O_X}\Omega_X^0)\tensor_{f^{-1}\O_Y}f^{-1}\D_Y\to\cdots\to (N\tensor_{\O_X}\Omega_X^n)\tensor_{f^{-1}\O_Y}f^{-1}\D_Y\to 0
\end{equation}
endowed with the filtration given by
\[
F_k\left((N\tensor_{\O_X}\Omega_X^i)\tensor_{f^{-1}\O_Y}f^{-1}\D_Y\right):=\sum_{j} (F_{k+i-j}N\tensor_{\O_X}\Omega^i_X)\tensor_{f^{-1}\O_Y} f^{-1}F_j\D_Y.
\]
Taking the filtered derived pushforward of \eqref{eqn: relative Spencer resolution} via the filtered version of the Godement resolution, we obtain \(f_+M\in D^bF(\D_Y)\). See \cite{Saito88}*{Section 2.3} for details.

\begin{rmk}
\label{rmk: boundary morphism}
In local coordinates \(\left\{y_j,\partial_j\right\}\) on \(Y\), the boundary morphisms of \eqref{eqn: relative Spencer resolution},
\[
d:(N\tensor_{\O_X}\Omega_X^i)\tensor_{f^{-1}\O_Y}f^{-1}\D_Y\to(N\tensor_{\O_X}\Omega_X^{i+1})\tensor_{f^{-1}\O_Y}f^{-1}\D_Y,
\]
are given by
\[
d(\omega\tensor P)=d\omega\tensor P+\sum_j f^*(dy_j)\wedge\omega\tensor \partial_j P,
\]
where \(\omega\) (resp. \(P\)) is a local section of \(N\tensor_{\O_X}\Omega_X^i\) (resp. \(\D_Y\)) and \(d\omega\) is its differential. This is easily obtained from the description of the Spencer resolution (c.f. \cite{HTT}*{Lemma 1.5.27}).
\end{rmk}

On a different note, for an arbitrary filtered sheaf \((M,F)\) on \(X\), not necessarily a filtered \(\D_X\)-module, there exists an associated graded sheaf
\[
\gr^FM:=\bigoplus_k \gr^F_kM.
\]
In particular, \(\gr^F\D_X=Sym^\bullet T_X\). Therefore, the associated graded complex of \eqref{eqn: relative Spencer resolution} is a complex of graded \(f^*\gr^F\D_Y:=\O_X\tensor_{f^{-1}\O_Y}f^{-1}\gr^F\D_Y\)-modules, and there exists a functor
\[
\gr^F\DR_{X/Y}:D^bF(\D_X)\to D^bG(f^*\gr^F\D_Y), \quad M^\cdot\mapsto \gr^F(M^\cdot\tensor^L_{\D_X}\D_{X\to Y}),
\]
where \(D^bG(f^*\gr^F\D_Y)\) is the bounded derived category of graded \(f^*\gr^F\D_Y\)-modules. This functor commutes with the pushforward functors, described next.

\begin{prop}
\label{prop: graded pushforward commuting diagram}
For a proper morphism \(\phi:Z\to X\) of smooth varieties over \(Y\), we have the following diagram:
\begin{equation*}
\label{diagram: graded pushforward commuting diagram}
\xymatrix@C=10ex{
{D^bF(\D_Z)} \ar[r]^-{\gr^F\DR_{Z/Y}} \ar[d]_{\phi_+} & {D^bG((f\circ\phi)^*(\gr^F\D_Y))} \ar[d]_{\phi_*} \\
{D^bF(\D_X)} \ar[r]^-{\gr^F\DR_{X/Y}} & {D^bG(f^*\gr^F\D_Y)}
}
\end{equation*}
which commutes up to natural isomorphism. Here, \(\phi_*\) is the derived pushforward functor of graded modules.
\end{prop}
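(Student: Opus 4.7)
The plan is to reduce the diagram's commutativity to three ingredients, all compatible with the filtration and strict enough that \(\gr^F\) remains exact.

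First, I would establish the filtered transitivity of transfer modules: as filtered \((\D_Z,(f\circ\phi)^{-1}\D_Y)\)-bimodules,
\[
\D_{Z\to Y}\cong \D_{Z\to X}\otimes^L_{\phi^{-1}\D_X}\phi^{-1}\D_{X\to Y}.
\]
This is verified locally using the relative Spencer resolution; on each side the filtration is the convolution of the order filtrations described before \eqref{eqn: relative Spencer resolution}, and a coordinate check (using the explicit boundary morphism of Remark \ref{rmk: boundary morphism}) shows that the two filtered bimodules agree.

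Second, combined with the filtered projection formula, realized at the chain level via the filtered Godement resolution used in Saito's definition of \(\phi_+\), this transitivity yields
\[
\phi_+(M^\cdot)\otimes^L_{\D_X}\D_{X\to Y}\cong R\phi_*\bigl(M^\cdot\otimes^L_{\D_Z}\D_{Z\to Y}\bigr),
\]
as filtered complexes, functorially in \(M^\cdot\in D^bF(\D_Z)\). At the chain level one writes \(M^\cdot\otimes^L_{\D_Z}\D_{Z\to X}\) via the relative Spencer resolution \eqref{eqn: relative Spencer resolution} for \(\phi\), pushes forward termwise by the filtered Godement resolution, and then concatenates with the Spencer resolution for \(f\); the filtered transitivity above identifies the result with the relative Spencer resolution for \(f\circ\phi\).

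Third, I would apply \(\gr^F\) to both sides. On the left, one obtains \(\gr^F\DR_{X/Y}(\phi_+M^\cdot)\) by definition. On the right, since each term of the filtered Godement resolution is flasque with strict filtration, the associated graded functor commutes with \(R\phi_*\), producing \(\phi_*\gr^F\DR_{Z/Y}(M^\cdot)\). The resulting isomorphism is natural in \(M^\cdot\), so the diagram commutes up to natural isomorphism.

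The main obstacle is the first step: tracking the shifts in the order filtration through the iterated tensor product and checking strictness, so that \(\gr^F\) stays exact across the chain of quasi-isomorphisms, requires a careful local computation with the Spencer resolution. Once the filtered transitivity and the strictness properties of the filtered Godement resolution are in place, the remainder of the proof is a formal chase of natural transformations.
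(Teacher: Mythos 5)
Your proposal is correct and takes essentially the same approach as the paper's. The paper's own proof is a one-line citation to Saito (1988), 2.3.7, declaring the result immediate from the construction of each functor; your three steps—filtered transitivity of transfer modules via the Spencer resolution, the filtered projection formula realized through the filtered Godement resolution, and compatibility of \(\gr^F\) with proper pushforward—are precisely the content of that reference and what the paper means by ``immediate from the construction.''
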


In other words, we have a natural isomorphism of functors
\[
\phi_*\circ \gr^F\DR_{Z/Y}\isom\gr^F\DR_{X/Y}\circ \phi_+.
\]

\begin{proof}
This is an analogue of \cite{Saito88}*{2.3.7}, which is immediate from the construction of each functor.
\end{proof}

\subsection{Cyclic covers and Hodge modules}
\label{sec: cyclic cover and hodge module}

Cyclic covers appear as a crucial ingredient for the construction of a Viehweg-Zuo sheaf. In particular, given a line bundle \(\L\) on \(X\) and a cyclic cover \(\phi:Z\to X\) induced by a section of \(\L^q\), there exists a map from a graded complex associated to the filtered D-module \(\w_X\) twisted by some power of \(\L\), to the pushforward of a graded complex associated to \(\w_Z\). We provide a better understanding of this map at the level of Hodge modules.

In order to involve the intersection complex in the construction of a Viehweg-Zuo sheaf, we refine the map described above in terms of the functors in Section \ref{sec: derived pushforward and associated graded}, especially when the cyclic cover \(\phi\) is non-characteristic for a mixed Hodge module. See \cite{Saito88}*{3.5.1} and \cite{Saito90}*{Lemma 2.25} for the details on the pullback of a mixed Hodge module with respect to a non-characteristic morphism.

\begin{thm}
\label{thm: noncharacteristic cycle cover}
Let \(f:X\to Y\) be a morphism of smooth varieties and \(M\in MHM(X)\) be a mixed Hodge module on \(X\). Let \(\phi:Z\to X\) be a proper desingularization of the \(q\)-cyclic cover associated to a section \(\O_X\to \L^q\). Assume that \(\phi\) is non-characteristic for \(M\). Then we have the following morphism in \(D^bG(f^*\gr^F\D_Y)\):
\[
\L^{-j}\tensor_{\O_X}\gr^F\DR_{X/Y}(M)\to \gr^F\DR_{X/Y}(\phi_+\phi^*M), \quad \forall j\ge0.
\]
\end{thm}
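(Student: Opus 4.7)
The plan is to produce the map locally from the cyclic cover structure and then globalize via adjunction and the commutativity between proper pushforward and associated graded de Rham from Proposition \ref{prop: graded pushforward commuting diagram}. The key geometric input is that the tautological section $t$ on the (possibly singular) $q$-cyclic cover $\pi:X'=\mathrm{Spec}_X(\mathrm{Sym}^\bullet\L^{-1}/(t^q-s))\to X$ generates $\pi^*\L$, with $t^q=\pi^*s$, and pulls back to a global section of $\phi^*\L$ on $Z$. For each $j\ge 0$, multiplication by $t^j$ defines an $\O_Z$-linear morphism
\[
\tau_j:\phi^*\L^{-j}\longrightarrow \O_Z,
\]
and this is the only place where the hypothesis $j\ge 0$ enters.

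Next, I would construct a natural pullback morphism of graded relative de Rham complexes on $Z$,
\[
\alpha:\phi^*\gr^F\DR_{X/Y}(M)\longrightarrow \gr^F\DR_{Z/Y}(\phi^*M).
\]
Since $\phi$ is non-characteristic for $M$, the filtered pullback $\phi^*M$ is a well-defined mixed Hodge module on $Z$ by \cite{Saito88}*{3.5.1} and \cite{Saito90}*{Lemma 2.25}, and taking $\gr^F$ commutes with this pullback. Combining the canonical map $\phi^*\Omega_X^i\to\Omega_Z^i$ with the induced pullback on the underlying filtered D-modules yields $\alpha$ term by term in the Spencer-type description \eqref{eqn: relative Spencer resolution}. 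The Spencer differentials are respected because, by Remark \ref{rmk: boundary morphism}, the extra contribution $f^*(dy_j)\wedge\omega\tensor\partial_jP$ is pulled back from $Y$, and $\phi$ lies over $Y$.

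Composing $\alpha$ with the map obtained by tensoring $\tau_j$ (pulled back to $Z$) with the identity gives
\[
\phi^*\L^{-j}\tensor\phi^*\gr^F\DR_{X/Y}(M)=\phi^*\bigl(\L^{-j}\tensor\gr^F\DR_{X/Y}(M)\bigr)\longrightarrow\gr^F\DR_{Z/Y}(\phi^*M),
\]
to which $(\phi^*,R\phi_*)$-adjunction produces
\[
\L^{-j}\tensor\gr^F\DR_{X/Y}(M)\longrightarrow R\phi_*\,\gr^F\DR_{Z/Y}(\phi^*M).
\]
Finally, Proposition \ref{prop: graded pushforward commuting diagram} applied to the proper morphism $\phi$ identifies the right-hand side with $\gr^F\DR_{X/Y}(\phi_+\phi^*M)$, yielding the desired morphism in $D^bG(f^*\gr^F\D_Y)$.

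The main obstacle will be the second step, namely constructing $\alpha$ and verifying that it respects both the grading over $f^{-1}\gr^F\D_Y$ and the Spencer-type differentials in the generality where $M$ is not locally free over $\O_X$. This is precisely where non-characteristicity of $\phi$ for $M$ does its work: it forces $\gr^F(\phi^*M)\isom\phi^*\gr^F M$ and ensures that the filtered Spencer resolution of $\phi^*M$ is the $\phi$-pullback of that of $M$, so that the naive pullback of forms genuinely assembles into a morphism of complexes of graded modules.
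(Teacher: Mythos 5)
Your proposal is correct and follows essentially the same route as the paper: reduce by Proposition \ref{prop: graded pushforward commuting diagram} and adjunction to a map on \(Z\), use non-characteristicity to identify \(\phi^*M\) as a filtered pullback, and build the map term-by-term on the Spencer-type complex using the tautological section of \(\phi^*\L\). Your splitting into a pullback map \(\alpha\) followed by multiplication by \(t^j\) is just a factorization of the paper's single map \(\Phi_i(\omega\tensor P)=g^j\phi^*(\omega)\tensor P\), and the justification you defer to the ``main obstacle'' is exactly the paper's one-line observation that the boundary maps become \(\O\)-linear after passing to \(\gr^F\), so multiplication by \(g^j\) commutes with them.
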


\begin{proof}
From Proposition \ref{prop: graded pushforward commuting diagram}, we have \(\gr^F\DR_{X/Y}(\phi_+\phi^*M)=\phi_*\gr^F\DR_{Z/Y}(\phi^*M)\).
By adjunction, it suffices to construct a map
\[
\Phi: \phi^*(\L^{-j}\tensor_{\O_X}\gr^F\DR_{X/Y}(M))\to \gr^F\DR_{Z/Y}(\phi^*M).
\]
Let \(N:=M\tensor \omega_X^{-1}\) be the filtered left D-module associated to \(M\). Since \(\phi\) is non-characteristic for \(M\), \(\phi^*M\) is a mixed Hodge module on \(Z\) whose underlying filtered left D-module is \(\phi^*(N,F)=(\phi^*N,\phi^*F)\). By \eqref{eqn: relative Spencer resolution}, we have
\begin{gather*}
\gr^F\DR_{X/Y}(M)=[\cdots\to (N\tensor\Omega_X^i)\tensor f^*\gr^F\D_Y\to\cdots],\\
\gr^F\DR_{Z/Y}(\phi^*M)=[\cdots\to (\phi^*N\tensor\Omega_Z^i)\tensor (f\circ \phi)^*\gr^F\D_Y\to\cdots].
\end{gather*}
Let \(g:\phi^*\L^{-1}\to \O_Z\) be the induced section. Under a local trivialization of \(\L\), we consider \(g\) as a function, locally on \(Z\). Then, there exists a natural morphism of complexes
\[\Phi_i: \phi^*(\L^{-j}\tensor(N\tensor\Omega_X^i)\tensor f^*\gr^F\D_Y)\to(\phi^*N\tensor\Omega_Z^i)\tensor (f\circ \phi)^*\gr^F\D_Y\]
defined locally as
\[
\Phi_i(\omega\tensor P)=g^j\phi^*(\omega)\tensor P,
\]
where \(\omega\) (resp. \(P\)) is a local section of \(N\tensor \Omega_X^i\) (resp. \((f\circ \phi)^*gr^F\D_Y\)). More precisely, we have the commutativity:
\begin{equation*}
\xymatrix{
{\w\tensor P} \ar@{|->}[r]^{d} \ar@{|->}[d]_{\Phi_i}
& {d(\w\tensor P)} \ar@{|->}[d]_{\Phi_{i+1}}\\
{g^j\phi^*(\w)\tensor P} \ar@{|->}[r]^d
& {d(g^j\phi^*(\w)\tensor P)}
}
\end{equation*}
where \(d\) is the boundary morphism of the complex. Indeed, we have
\[
d(g^j\phi^*(\w)\tensor P)=g^jd(\phi^*(\w)\tensor P),
\]
since \(d\) is \(\O\)-linear after taking the associated graded functor \(\gr^F\). Consequently, the equality
\[
\Phi_{i+1}\left(d(\w\tensor P)\right)=g^jd(\phi^*(\w)\tensor P)
\]
is immediate from the description of the boundary morphisms in Remark \ref{rmk: boundary morphism}. Therefore, \(\Phi_i\) induces a well-defined morphism \(\Phi\) of complexes, which completes the proof.
\end{proof}

For a smooth variety \(X\) of dimension \(n\), the shifted constant sheaf \(\Q_X[n]\) underlies the polarizable Hodge module \(\Q_X^{H}[n]\) of weight \(n\), whose filtered right \(\D_X\)-module is the canonical bundle \(\omega_X\) with the filtration \[F_{\ge-n}(\omega_X)=\omega_X,\quad F_{<-n}(\omega_X)=0.\] We use the same notation \(\Q_X^{H}[n]\) for the underlying filtered \(\D_X\)-module. As an immediate consequence, we obtain the following extension of Popa-Schnell \cite{PS17}*{Proposition 2.8} using the functors in Section \ref{sec: derived pushforward and associated graded}; it is this form that we will need later.

\begin{cor}
\label{cor: cyclic cover and Hodge module}
Let \(f:X\to Y\) be a morphism of smooth varieties and \(\mathcal{L}\) be a line bundle on \(X\). Let \(\phi:Z\to X\) be a proper desingularization of \(q\)-cyclic cover associated to a section \(\O_X\to \L^q\). Then, we have the following morphism in \(D^bG(f^*\gr^F\D_Y)\):
\[
\L^{-j}\tensor_{\O_X}\gr^F\DR_{X/Y}(\Q^{H}_X[n])\to \gr^F\DR_{X/Y}(\phi_+\Q^{H}_Z[n]), \quad \forall j\ge0.
\]
\end{cor}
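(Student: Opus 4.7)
The plan is to obtain Corollary 2.4 as a direct specialization of Theorem 2.3 to the mixed Hodge module \(M=\Q_X^H[n]\). Once the hypotheses of Theorem 2.3 are verified in this case and the pullback \(\phi^*M\) is identified, the statement of the corollary is exactly the output of the theorem.

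First, I would check that \(\phi:Z\to X\) is non-characteristic for \(\Q_X^H[n]\). By the conventions fixed just before the statement, the underlying filtered right \(\D_X\)-module of \(\Q_X^H[n]\) is \(\w_X\); equivalently, its associated left \(\D_X\)-module is the structure sheaf \(\O_X\) with the trivial filtration \(F_{\ge 0}\O_X=\O_X\), \(F_{<0}\O_X=0\). In particular, the characteristic variety of \(\Q_X^H[n]\) is the zero section of \(T^*X\), so every morphism into \(X\) is automatically non-characteristic; there is no transversality condition to verify.

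Next, I would identify \(\phi^*\Q_X^H[n]\cong \Q_Z^H[n]\) in \(MHM(Z)\). Since \(\phi\) is a proper desingularization of a \(q\)-cyclic cover, it is generically finite, so \(\dim Z=\dim X=n\) and the relative dimension is zero. Combined with the non-characteristic condition and Saito's construction of the pullback \cite{Saito88}*{3.5.1}, \cite{Saito90}*{Lemma 2.25}, the underlying filtered left \(\D_Z\)-module of \(\phi^*\Q_X^H[n]\) is \(\phi^*\O_X=\O_Z\) with the trivial filtration, which after applying the left-right transformation \(N\mapsto \w_Z\tensor N\) recovers precisely \(\w_Z\) with the trivial filtration, i.e. \(\Q_Z^H[n]\) in right D-module form.

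Plugging \(M=\Q_X^H[n]\) into Theorem 2.3 and substituting \(\phi^*M=\Q_Z^H[n]\) on the right-hand side yields the morphism claimed in the corollary. There is no substantive obstacle beyond the bookkeeping just described; the only care needed is to track the shift \([n]\) and the left-right conversion consistently, so that the trivial filtration on \(\Q_X^H[n]\) matches the trivial filtration on \(\Q_Z^H[n]\) under the pullback before feeding everything into the functor \(\gr^F\DR_{X/Y}\).
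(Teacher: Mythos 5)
Your proposal is correct and matches the paper's approach: the paper's proof is the single observation that $\Q_X^H[n]$ is non-characteristic for any morphism of smooth varieties (since its characteristic variety is the zero section), after which the corollary follows directly from Theorem~\ref{thm: noncharacteristic cycle cover}. You simply spell out the implicit steps — computing the characteristic variety and identifying $\phi^*\Q_X^H[n]\cong\Q_Z^H[n]$ — in more detail than the paper does.
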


This follows from the fact that the Hodge module \(\Q_X^{H}[n]\) is non-characteristic for any morphism of smooth varieties.

\subsection{Intersection complexes as Hodge modules on singular varieties}
\label{sec: intersection complex}

On a singular variety \(X\), we highlight the properties of our main ingredient \(\IC_X^H\), the polarizable Hodge module associated to the intersection complex of \(X\). The category of polarizable Hodge modules \(MH(X)\) is defined as follows: given a closed embedding \(X\subset W\) into a smooth variety \(W\), \(MH(X)\) is a subcategory of \(MH(W)\) consisting of polarizable Hodge modules on \(W\) supported on \(X\). It can be shown that the category \(MH(X)\) is defined independent of the choice of an embedding. For example, the minimal perverse extension of the trivial Hodge module \(\Q_{X^{sm}}^{H}[n]\) on the smooth locus \(X^{sm}\) of \(X\) induces the polarizable Hodge module \(\IC_X^{H}\in MH(X)\). See Saito \cite{Saito90} for details.

For a polarizable Hodge module \(M\in MH(X)\), the underlying filtered (right) D-module \(M\) depends on the smooth embedding \(X\subset W\). If we define
\[
p(M):=\min\left\{k\in \Z\mid F_kM\neq0\right\}, \quad S(M):=F_{p(M)}M,
\]
then \(S(M)\) is a coherent \(\O_X\)-module, namely \textit{the lowest nonzero term of the Hodge filtration} of \(M\). Somewhat surprisingly, \(S(M)\) is independent of the choice of an embedding \(X\subset W\) by the first assertion of \cite{Saito90}*{Proposition 2.33}. We aim to understand \(S(\IC_X^H)\), especially when \(X\) has rational singularities.

For that purpose, we highlight Saito's Decomposition Theorem for proper pushforwards of Hodge modules which generalizes Beilinson-Bernstein-Deligne-Gabber's Decomposition Theorem \cite{BBD}.

\begin{thm}[Saito's Decomposition Theorem \cite{Saito91}*{Theorem 2.4}]
\label{thm: Saito's decomposition theorem}
Let \(f:X\to Y\) be a proper morphism of varieties and \(M\in MH(X)\). Then \(\mathscr H^if_+M\in MH(Y)\) and we have a (non-canonical) isomorphism
\[
f_+M\isom \bigoplus_i \mathscr H^if_+M[-i]
\]
in the bounded derived category of mixed Hodge modules \(D^bMHM(Y)\) on \(Y\).
\end{thm}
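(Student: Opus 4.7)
The plan is to follow the standard Deligne--Saito strategy: reduce to a polarized Hodge module on a smooth ambient variety, establish that the cohomology sheaves of the proper pushforward remain polarizable Hodge modules, and then derive the decomposition from a relative hard Lefschetz theorem.

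First I would reduce to a manageable setup. Since $MH(X)$ for singular $X$ is defined via a closed embedding $i: X \hookrightarrow W$ into a smooth variety, I may replace $M$ by $i_+ M \in MH(W)$ and $f$ by $f \circ i$, reducing to the case where the source is smooth. A parallel embedding on the target (or a local argument) reduces to $Y$ smooth as well. Using Chow's lemma, I may further assume $f$ is projective, so that a relatively ample line bundle $L$ is available on $X$.

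Next, I would establish the stability statement $\mathscr H^i f_+ M \in MH(Y)$; this is one of the core inductive statements in \cite{Saito88}. The argument proceeds by induction on $\dim \supp M$. The essential case is when $M$ is the minimal extension of a polarized variation of Hodge structure on an open dense smooth subset of its support, so that the underlying perverse sheaf behaves well under proper pushforward by \cite{BBD}; the delicate point is to show that the induced filtration and polarization on $\mathscr H^i f_+ M$ satisfy Saito's axioms, which is checked by testing against nearby and vanishing cycles along arbitrary local equations on $Y$.

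With stability in hand, I would invoke the Relative Hard Lefschetz Theorem for polarizable Hodge modules: the iterated cup product
\[
L^i: \mathscr H^{-i} f_+ M \xrightarrow{\ \sim\ } \mathscr H^{i} f_+ M(i)
\]
is an isomorphism in $MH(Y)$ for every $i \ge 0$. Deligne's classical splitting argument, reformulated in the derived category of mixed Hodge modules, then extracts a non-canonical splitting $f_+ M \cong \bigoplus_i \mathscr H^i f_+ M[-i]$ in $D^b MHM(Y)$: the Lefschetz isomorphism provides a primitive decomposition of the cohomology sheaves, and truncating the Postnikov tower of $f_+ M$ against this primitive decomposition exhibits each cohomology sheaf as a direct summand.

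The main obstacle is the Relative Hard Lefschetz Theorem itself. Simultaneously controlling the perverse structure, the Hodge filtration, and the polarization under proper direct image requires the full force of Saito's formalism, in particular the inductive machinery of strict support decompositions and specialization along nearby and vanishing cycles. Once this is available, the remaining passage to a splitting in $D^b MHM(Y)$ is purely formal and applies verbatim in any polarized Hodge-theoretic setting.
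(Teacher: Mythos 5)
The paper does not prove this statement: it is cited verbatim from Saito \cite{Saito91}*{Theorem 2.4}, which in turn rests on the main theorems of \cite{Saito88} (stability of polarizable Hodge modules under projective direct image, relative Hard Lefschetz) and \cite{Saito90} (the mixed and singular-variety formalism in which the derived category $D^bMHM(Y)$ lives). So there is no in-paper proof to compare against; you are being measured against Saito's original argument, and as a high-level summary your sketch is accurate. The two load-bearing inputs you identify --- that $\mathscr H^i f_+M$ is again a polarizable Hodge module, proved by induction on $\dim\supp M$ using strict support decomposition and nearby/vanishing cycles along arbitrary local functions on $Y$, and the relative Hard Lefschetz isomorphism $L^i:\mathscr H^{-i}f_+M \xrightarrow{\sim} \mathscr H^i f_+M(i)$ for a relatively ample class --- are exactly the two pillars, and Deligne's formal splitting lemma then produces the non-canonical isomorphism in the derived category.

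One step you pass over quickly is the reduction from proper to projective. Chow's lemma gives a projective birational $\pi:\tilde X\to X$ with $f\circ\pi$ projective, but you then need to descend from a statement about $(f\circ\pi)_+$ to one about $f_+$; the standard device is to express $M$ as a direct summand of $\mathscr H^0\pi_+\pi^{!}M$ (or of $\pi_+$ of a Hodge module on $\tilde X$) via a trace map, which already uses the decomposition theorem for the projective morphism $\pi$. This is not circular --- one proves the projective case first and then bootstraps --- but it is a genuine devissage rather than a one-line reduction, and Saito's treatment is correspondingly careful. Aside from that, your outline matches the literature proof.
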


For the description of the bounded derived category of mixed Hodge modules, see \cite{Saito90}*{Section 4}. Alternatively, we may consider the above isomorphism in the bounded derived category of filtered D-modules, \(D^bF(\D_Y)\), when \(Y\) is smooth. Saito further explains the behavior of the lowest nonzero term of the Hodge filtration for the proper pushforward.

\begin{prop}[\cite{Saito91}*{Proposition 2.6}]
\label{prop: lowest Hodge filtration of pushforward}
In the setting of Theorem \ref{thm: Saito's decomposition theorem}, suppose \(X\) is irreducible and \(M\) is an irreducible polarizable Hodge module with strict support on \(X\). Let \(\mathscr H^if_+M=\oplus_{Z\subset X} M^i_Z\) be the decomposition of the Hodge module by strict support. Then
\[
S(M^i_{f(X)})\isom R^if_*S(M)
\]
and for all \(Z\neq f(X)\), we have \(p(M_Z^i)>p(M)\).
\end{prop}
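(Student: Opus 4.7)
The plan is to derive both statements from Saito's strictness theorem for polarizable Hodge modules under proper direct image, combined with the explicit description of the filtered pushforward via the filtered relative Spencer resolution \eqref{eqn: relative Spencer resolution}. Strictness states that the filtered complex $f_+(M,F_\bullet)$ is strict, so passing to associated graded commutes with taking cohomology:
\[
\gr^F_p \mathscr{H}^i f_+ M \cong \mathscr{H}^i \gr^F_p (f_+ M).
\]
This reduces the determination of the Hodge filtration on each summand $M^i_Z$ appearing in Saito's Decomposition Theorem \ref{thm: Saito's decomposition theorem} to a computation of graded pieces of the filtered pushforward complex.

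The key calculation is at the critical level $p = p(M)$. From the filtration formula on \eqref{eqn: relative Spencer resolution}, I would argue that only the terms involving $S(M) = F_{p(M)} M$ paired with the zeroth-order piece of $f^{-1}\D_Y$ contribute to the associated graded in the lowest wedge degree, while the higher-order contributions assemble into an acyclic complex. A direct verification then shows that $\gr^F_{p(M)}(f_+ M)$ is quasi-isomorphic in $D^b(\O_Y)$ to $Rf_* S(M)$, up to an appropriate shift, and an analogous analysis at $p < p(M)$ gives vanishing of the graded piece there. Strictness then yields
\[
\gr^F_{p(M)} \mathscr{H}^i f_+ M \cong R^i f_* S(M), \qquad F_{p(M)-1} \mathscr{H}^i f_+ M = 0,
\]
so in particular $p(M^i_Z) \geq p(M)$ for every strict-support summand.

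The remaining and technically most delicate task is to identify $R^i f_* S(M)$ with $S(M^i_{f(X)})$ alone, equivalently to rule out a summand $M^i_Z$ with $Z \subsetneq f(X)$ and $p(M^i_Z)=p(M)$. For such a summand, $S(M^i_Z)$ would be torsion-free on the proper subvariety $Z \subset f(X)$, and would therefore inject a torsion contribution into $R^i f_* S(M)$. The main obstacle I anticipate is thus a rigidity argument: since $M$ has strict support $X$, the sheaf $S(M)$ is torsion-free on $X$, and I would attempt, via a generic-point analysis over a dense open $U \subset f(X)$ where $f$ is generically well-behaved (e.g.\ using generic smoothness or a generic stratification of $f$), to show that $R^i f_* S(M)|_U$ is captured entirely by the generic data of the dominant summand $M^i_{f(X)}$. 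Upgrading this generic identification to a global statement, using torsion-freeness of $S(M^i_{f(X)})$ on $f(X)$ together with uniqueness of the strict-support decomposition, would force the remaining summands to sit at strictly higher Hodge level, giving $p(M^i_Z) > p(M)$ for $Z \neq f(X)$.
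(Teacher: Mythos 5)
Your reduction is correct and, up to that point, essentially matches the standard approach: strictness of $f_+(M,F)$ gives $\gr^F_{p(M)}\mathscr H^if_+M\isom\mathscr H^i\gr^F_{p(M)}f_+M$, and a direct inspection of the filtration on the relative Spencer complex \eqref{eqn: relative Spencer resolution} shows that $\gr^F_{p(M)}$ of that complex is $S(M)$ concentrated in a single degree while $\gr^F_{<p(M)}$ vanishes, so $\gr^F_{p(M)}\mathscr H^if_+M\isom R^if_*S(M)$ and $p(M^i_Z)\ge p(M)$ for every $Z$. So far, this is the same skeleton that Saito uses.

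The genuine gap is in the last paragraph. Decomposing
\[
R^if_*S(M)\;=\;\gr^F_{p(M)}M^i_{f(X)}\;\oplus\;\bigoplus_{Z\neq f(X)}\gr^F_{p(M)}M^i_Z,
\]
the extra summands are coherent sheaves supported on proper subvarieties $Z\subsetneq f(X)$, i.e.\ torsion on $f(X)$, and you need them to vanish. Your proposed fix --- a generic identification over a dense open $U\subset f(X)$, ``upgraded'' using torsion-freeness of $S(M^i_{f(X)})$ and uniqueness of the strict-support decomposition --- does not close the gap. Over $U$ the identification is automatic because the torsion summands vanish there; the problem is precisely what happens off $U$. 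Torsion-freeness of $S(M^i_{f(X)})$ only gives you an injection $S(M^i_{f(X)})\hookrightarrow R^if_*S(M)$ which is an isomorphism on $U$; it says nothing about the cokernel, which is exactly the torsion sheaf you need to kill. In other words, what you actually need is that $R^if_*S(M)$ has no torsion subsheaf supported on $Z\subsetneq f(X)$ --- and that assertion is essentially the content of Saito's Hodge-module generalization of Koll\'ar's torsion-freeness theorem, not a formal consequence of the decomposition. Assuming it here would be circular, since the proposition is a main step toward proving it.

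For comparison, the paper does not reprove the statement; it cites Saito's Proposition~2.6 in \cite{Saito91}. Saito's argument for the vanishing of $\gr^F_{p(M)}M^i_Z$ when $Z\subsetneq f(X)$ is not a rigidity/generic-extension argument: it proceeds by induction on $\dim f(X)$, cutting by generic hyperplane sections through a point of $Z$ and using the compatibility of polarizable Hodge modules (and of the decomposition by strict support) with non-characteristic restriction. The generic hyperplane $H$ reduces $\dim Z\cap H<\dim f(X)\cap H$ while preserving $p$, so the inductive hypothesis forces $\gr^F_{p(M)}M^i_Z$ to vanish generically along $Z$, and iterating gives the result. That non-characteristic restriction machinery is the missing input in your sketch; without something of that strength, the step from ``true over a dense open'' to ``true everywhere'' does not follow.
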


Notice that every polarizable Hodge module is a direct sum of irreducible polarizable Hodge modules strictly supported on irreducible subvarieties \cite{Saito88}*{5.1.5}.  As an immediate consequence, we have a description of the lowest nonzero term of the Hodge filtration of \(\IC_X^H\) from the desingularization of \(X\).

\begin{prop}
\label{prop: lowest Hodge filtration of intersection complex}
Let \(X\) be an irreducible variety and \(\mu:\widetilde X\to X\) be a proper resolution of singularities. Then 
\[
S(\IC_X^{H})\isom \mu_*\omega_{\widetilde X}.
\]
In particular, if \(X\) has rational singularities, then \(S(\IC_X^{H})\isom \omega_X\).
\end{prop}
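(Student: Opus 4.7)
The plan is to deduce this from Saito's Decomposition Theorem applied to the resolution $\mu : \widetilde X \to X$, combined with Proposition \ref{prop: lowest Hodge filtration of pushforward}. Set $n = \dim X = \dim \widetilde X$ and consider the polarizable Hodge module $\Q_{\widetilde X}^H[n]$ on the smooth variety $\widetilde X$. By the definition recalled just before Corollary \ref{cor: cyclic cover and Hodge module}, its underlying filtered right D-module has $p(\Q_{\widetilde X}^H[n]) = -n$ and $S(\Q_{\widetilde X}^H[n]) = \omega_{\widetilde X}$.

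Next, I would apply Saito's Decomposition Theorem \ref{thm: Saito's decomposition theorem} to the proper map $\mu$ and the irreducible polarizable Hodge module $M := \Q_{\widetilde X}^H[n]$ (which has strict support $\widetilde X$), and decompose $\mathscr H^0\mu_+ M$ into its strict-support summands $\bigoplus_{Z \subset X} M^0_Z$. On the smooth locus $X^{sm}$, the map $\mu$ is an isomorphism (for a suitable resolution) and hence $\mu_+ M|_{X^{sm}} \cong \Q_{X^{sm}}^H[n]$; by the minimal-extension characterization of $\IC_X^H$ recalled at the beginning of Section \ref{sec: intersection complex}, this forces the summand $M^0_X$ supported on all of $X$ to be exactly $\IC_X^H$. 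Proposition \ref{prop: lowest Hodge filtration of pushforward} then gives
\[
S(\IC_X^H) = S(M^0_{\mu(\widetilde X)}) \cong R^0\mu_* S(M) = \mu_*\omega_{\widetilde X},
\]
which is the first statement.

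For the particular case, I would invoke the standard characterization: if $X$ has rational singularities, then $X$ is Cohen--Macaulay and $\mu_*\omega_{\widetilde X} \cong \omega_X$ (equivalently, by Grothendieck duality, this follows from $R\mu_*\O_{\widetilde X} = \O_X$ together with Grauert--Riemenschneider vanishing $R^i\mu_*\omega_{\widetilde X} = 0$ for $i > 0$). Substituting this identification into the isomorphism above yields $S(\IC_X^H) \cong \omega_X$.

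The only real subtlety is the identification of $\IC_X^H$ with the summand $M^0_X$; once that is justified via the minimal-extension property on $X^{sm}$, everything else is a direct application of Propositions \ref{prop: lowest Hodge filtration of pushforward} and standard facts about rational singularities. I do not expect any serious computational obstacle, since the filtered Spencer resolution of Section \ref{sec: derived pushforward and associated graded} is not needed here — the result is purely a consequence of Saito's decomposition machinery and the well-known behavior of $\mu_*\omega_{\widetilde X}$ for rational singularities.
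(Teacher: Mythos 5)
Your proof is correct and follows essentially the same route as the paper: both identify $\IC_X^H$ as the strict-support-$X$ summand of $\mathscr H^0\mu_+(\Q_{\widetilde X}^H[n])$ via Saito's Decomposition Theorem and then invoke Proposition \ref{prop: lowest Hodge filtration of pushforward}, with the rational-singularities case handled by the standard fact $\mu_*\omega_{\widetilde X}\isom\omega_X$. The paper simply states the identification in one line where you spell out the minimal-extension argument; there is no substantive difference.
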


\begin{proof}
Since \(\IC_X^{H}\) is an irreducible factor of \(\mathscr H^0\mu_+\left(\Q_{\widetilde X}^{H}[n]\right)\) with strict support \(X\), we obtain the result from Proposition \ref{prop: lowest Hodge filtration of pushforward}.
\end{proof}

Finally, we state an analogue of Corollary \ref{cor: cyclic cover and Hodge module} for the cyclic cover of an irreducible singular variety \(X\) with a line bundle \(\L\). Let \(f:X\to Y\) be a morphism to a smooth variety \(Y\), admitting a smooth embedding \(X\subset W\) over \(Y\):
\begin{equation*}
\xymatrix{
{X} \ar@{^{(}->}[r] \ar[rd]_f
& {W} \ar[d]^p\\
{} & {Y}
}
\end{equation*}
Then, the functor \(\gr^F\DR_{X/Y}\) is defined as:
\[
\gr^F\DR_{X/Y}:=\gr^F\DR_{W/Y}:D^bF(\D_X)\to D^bG(p^*\gr^F\D_Y)
\]
where \(D^bF(\D_X)\) is the bounded derived category of filtered (right) \(\D_W\)-modules supported on \(X\). In particular, for a (mixed) Hodge module \(M\in MHM(X)\), we have
\[
\gr^F\DR_{X/Y}(M)\in D^bG(f^*\gr^F\D_Y)\hookrightarrow D^bG(p^*\gr^F\D_Y),
\]
due to \cite{Saito88}*{Lemme 3.2.6}, and thus,
\[
\gr^F\DR_{X/Y}:D^bMHM(X)\to D^bG(f^*\gr^F\D_Y).
\]
Then we have the following, which plays a crucial role in the construction of a logarithmic Higgs sheaf:

\begin{cor}
\label{cor: cyclic cover and intersection complex}
Under the above notation, let \(\phi:Z\to X\) be a proper desingularization of \(q\)-cyclic cover associated to a section \(\O_X\to \L^q\). Then, we have the following morphism in \(D^bG(f^*\gr^F\D_Y)\):
\[
\L^{-j}\tensor_{\O_X}\gr^F\DR_{X/Y}(\IC^{H}_X)\to \gr^F\DR_{X/Y}(\phi_+\Q^{H}_Z[n]), \quad \forall j\ge0.
\]
\end{cor}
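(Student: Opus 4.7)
The strategy is to reduce to the smooth case of Corollary \ref{cor: cyclic cover and Hodge module} via a resolution of singularities of \(X\), then invoke Saito's Decomposition Theorem twice: once on the base to extract \(\IC_X^H\) as a direct summand, and once on the cover to extract \(\phi_+ \Q_Z^H[n]\) as a direct summand.

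\textbf{Setup and main construction.} Choose a proper resolution of singularities \(\mu:\widetilde X\to X\), and let \(\widetilde\phi:\widetilde Z\to \widetilde X\) be a proper desingularization of the \(q\)-cyclic cover of \(\widetilde X\) associated to the pulled-back section \(\O_{\widetilde X}\to (\mu^*\L)^q\). By the universal property of cyclic covers, this construction can be chosen so that there is a proper morphism \(\nu:\widetilde Z\to Z\) fitting into a commutative diagram with \(\mu\circ\widetilde\phi=\phi\circ\nu\). Since \(\widetilde X\) is smooth, Corollary \ref{cor: cyclic cover and Hodge module} applied to the composition \(f\circ\mu:\widetilde X\to Y\), the line bundle \(\mu^*\L\), and the cyclic cover \(\widetilde\phi\) produces, for every \(j\ge 0\), a morphism
\[
\mu^*\L^{-j}\tensor_{\O_{\widetilde X}}\gr^F\DR_{\widetilde X/Y}(\Q^{H}_{\widetilde X}[n])\to \gr^F\DR_{\widetilde X/Y}(\widetilde\phi_+\Q^{H}_{\widetilde Z}[n])
\]
in \(D^bG((f\circ\mu)^*\gr^F\D_Y)\).

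\textbf{Pushing forward and decomposing.} Push this morphism forward along \(\mu\) using Proposition \ref{prop: graded pushforward commuting diagram}; the line bundle \(\L^{-j}\) pulls out of \(\mu_*\) by the projection formula (it is locally free, so no derived issue arises), yielding in \(D^bG(f^*\gr^F\D_Y)\):
\[
\L^{-j}\tensor_{\O_X}\gr^F\DR_{X/Y}(\mu_+\Q^{H}_{\widetilde X}[n])\to \gr^F\DR_{X/Y}(\phi_+\nu_+\Q^{H}_{\widetilde Z}[n]),
\]
where on the right we have also used \(\mu_+\widetilde\phi_+=\phi_+\nu_+\). Now apply Saito's Decomposition Theorem \ref{thm: Saito's decomposition theorem} on both sides. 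Since \(\IC_X^{H}\) is an irreducible factor with strict support \(X\) of \(\mathscr H^0\mu_+\Q^{H}_{\widetilde X}[n]\), it splits off \(\mu_+\Q^{H}_{\widetilde X}[n]\) as a direct summand in \(D^bMHM(X)\); similarly, since \(Z\) is smooth and \(\nu\) is proper birational, \(\Q^{H}_Z[n]=\IC_Z^H\) is a direct summand of \(\mathscr H^0\nu_+\Q^{H}_{\widetilde Z}[n]\), hence \(\phi_+\Q^{H}_Z[n]\) is a direct summand of \(\phi_+\nu_+\Q^{H}_{\widetilde Z}[n]\).

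\textbf{Conclusion and main obstacle.} Applying the functor \(\L^{-j}\tensor_{\O_X}\gr^F\DR_{X/Y}(-)\) (respectively \(\gr^F\DR_{X/Y}(-)\)) preserves direct summands, so we may pre-compose with the inclusion of the summand \(\IC_X^H\) and post-compose with the projection onto the summand \(\phi_+\Q_Z^H[n]\); this produces the desired morphism. The main technical point that requires care is the verification that \(\mu_+\widetilde\phi_+=\phi_+\nu_+\) holds on the nose at the level of the functor \(\gr^F\DR_{X/Y}\), which follows from the iterated application of Proposition \ref{prop: graded pushforward commuting diagram} to the tower \(\widetilde Z\to \widetilde X\to X\to Y\) versus \(\widetilde Z\to Z\to X\to Y\); everything else is a formal consequence of the decomposition theorem and the additivity of the functors involved.
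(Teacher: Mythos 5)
Your proposal is correct and follows essentially the same route as the paper's proof: both construct the commutative square relating \(\widetilde Z\to\widetilde X\to X\) and \(\widetilde Z\to Z\to X\), apply Corollary \ref{cor: cyclic cover and Hodge module} on the smooth resolution \(\widetilde X\), push forward via Proposition \ref{prop: graded pushforward commuting diagram}, and then extract \(\IC_X^H\) (resp.\ \(\Q_Z^H[n]\)) as a direct summand of \(\mu_+\Q_{\widetilde X}^H[n]\) (resp.\ \(\nu_+\Q_{\widetilde Z}^H[n]\)) using Saito's Decomposition Theorem. The only differences from the paper are cosmetic: you spell out the projection formula for the locally free twist and the fact that additive functors preserve summands, which the paper leaves implicit.
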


\begin{proof}
Consider the commutative diagram
\begin{equation*}
\xymatrix{
{\widetilde Z} \ar[r]^{\tilde\phi} \ar[d]_{\mu_Z} &{\widetilde X} \ar[d]^{\mu}\\
{Z}\ar[r]^\phi & {X}
}
\end{equation*}
where \(\mu\) and \(\mu_Z\) are resolutions of singularities. Then, \(\tilde \phi\) is a proper desingularization of the cyclic cover associated to \(\O_{\widetilde X}\to \mu^*\L^q\). Consequently, we have
\[
\mu^*\L^{-j}\tensor_{\O_{\widetilde X}}\gr^F\DR_{{\widetilde X}/Y}(\Q^H_{\widetilde X}[n])\to \gr^F\DR_{\widetilde X/Y}(\tilde \phi_+\Q^{H}_{\widetilde Z}[n])
\]
in \(D^b(\mu^*f^*\gr^F\D_Y)\), by Corollary \ref{cor: cyclic cover and Hodge module}. Notice that by Saito's Decomposition Theorem \ref{thm: Saito's decomposition theorem}, we have \(\IC^H_X\) as a direct summand of \(\mu_+\Q^{H}_{\widetilde X}[n]\) and \(\Q^{H}_{Z}[n]\) as a direct summand of \({\mu_Z}_+\Q^{H}_{\widetilde Z}[n]\). Therefore, from Proposition \ref{prop: graded pushforward commuting diagram}, we obtain the conclusion after taking the pushforward \({\mu}_*\).
\end{proof}

\begin{rmk}
\label{rmk: lowest filtration of intersection complex}
From the construction of the functor \(\gr^F\DR_{X/Y}\), we have 
\[
S(M)=\gr^F_{p(M)}\DR_{X/Y}(M).
\]
In particular, we have 
\[
\gr^F_{p(\IC_X^H)}\DR_{X/Y}(\IC_X^H)\isom \mu_*\w_{\widetilde X}
\]
by Proposition \ref{prop: lowest Hodge filtration of intersection complex}.
\end{rmk}

\section{Intersection complexes and Whitney equisingular morphisms}
\label{sec: intersection complex and Whitney equisingular morphism}

As mentioned in the introduction, our ultimate goal is to incorporate the intersection complex in the construction of a Viehweg-Zuo sheaf obtained from a logarithmic Higgs sheaf. In fact, the crucial object appearing in the construction is the graded complex associated to the intersection complex,
\[
\gr^F\DR_{X/Y}(\IC_X^H)\in D^bG(f^*\gr^F\D_Y),
\]
in Corollary \ref{cor: cyclic cover and intersection complex}. It is important to first understand the characteristic variety of \(\IC_X\) via Whitney stratification and determine the support in \(X\times_Y T^*_Y\) of the complex above. In the subsequent section, we study some properties of Whitney equisingular morphisms used later in the construction.

\subsection{Whitney stratifications and the characteristic variety of \texorpdfstring{\(\IC_X\)}{ICX}}
\label{sec: characteristic variety of intersection complex}

For a singular variety \(X\), the intersection complex \(\IC_X\) has two interpretations - one as a regular holonomic D-module and the other as a perverse sheaf. The connection is made via the Riemann-Hilbert correspondence
\[
\DR_X:D^b_{rh}(\D_X)\xrightarrow{\sim}D^b_c(X)
\]
where \(D^b_{rh}(\D_X)\) is the bounded derived category of regular holonomic D-modules and \(D^b_c(X)\) is the bounded derived category of \(\mathbb C\)-constructible sheaves. Here, the category of D-modules on \(X\) is equivalent to the category of \(\D_W\)-modules supported on \(X\) for a smooth embedding \(X\subset W\). In particular, we have:

\begin{thm}[\cite{KS90}*{Theorem 11.3.3.}]
\label{thm: singular support and characteristic variety}
Let \(X\) be a complex manifold and \(M\) be a regular holonomic \(\D_X\)-module. Then 
\[
SS(\DR_X(M))=Ch(M)
\]
where \(SS(\DR_X(M))\) is the singular support of the perverse sheaf \(\DR_X(M)\) and \(Ch(M)\) is the characteristic variety of \(M\).
\end{thm}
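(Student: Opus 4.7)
The plan is to prove the set-theoretic equality of two closed conic subsets of $T^*X$ by establishing both inclusions. The problem is local on $X$, so I work on a small polydisk. By holonomicity, $Ch(M)$ is a closed conic Lagrangian in $T^*X$; by a classical theorem of Kashiwara, $SS(\DR_X(M))$ is also closed conic Lagrangian. Using the structure theory of regular holonomic $\D_X$-modules together with the $\C$-constructibility of $\DR_X(M)$, I choose a common Whitney stratification $X=\coprod_\alpha X_\alpha$ such that both sets are contained in the finite union $\bigcup_\alpha T^*_{X_\alpha}X$. The problem then reduces to verifying, for each stratum $X_\alpha$, that $T^*_{X_\alpha}X$ appears on one side exactly when it appears on the other.

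For the inclusion $SS(\DR_X(M))\subset Ch(M)$, which does not require regularity, I would resolve $M$ by the Spencer complex to compute $\DR_X(M)$ explicitly, and filter it using a good filtration $F_\bullet M$. The associated graded complex is a Koszul-type complex on $T^*X$ whose support is precisely $Ch(M)$. Standard microlocal propagation estimates (in the spirit of the Kashiwara--Schapira inequality $SS(\DR_X(M))\subset \supp \gr^F M$) then yield the desired containment.

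The reverse inclusion $Ch(M)\subset SS(\DR_X(M))$ is the main obstacle and is exactly where regularity enters. Given a smooth point $(x_0,\xi_0)$ of a component $T^*_{X_\alpha}X$ lying in $Ch(M)$, I would pick a holomorphic function $g$ near $x_0$ vanishing on $X_\alpha$ with $dg(x_0)=\xi_0$, and study the vanishing cycle complex $\phi_g\DR_X(M)$. The essential input is that specialization of $\D$-modules along $g$ commutes with the de Rham functor for regular holonomic modules; this identifies $\phi_g\DR_X(M)$ near $x_0$ with the de Rham complex of the specialization of $M$ to the normal bundle of $X_\alpha$, whose generic rank equals the multiplicity of $T^*_{X_\alpha}X$ in $Ch(M)$ and is positive by assumption. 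Nonvanishing of $\phi_g\DR_X(M)$ then forces $(x_0,\xi_0)\in SS(\DR_X(M))$ by the microlocal Morse-theoretic description of singular support. The technical heart of the argument is the compatibility of specialization with the de Rham functor in the regular setting, which ultimately rests on the good behavior of the $V$-filtration for regular holonomic $\D_X$-modules; this is the step I would invoke as a black box from Kashiwara's foundational work rather than reprove.
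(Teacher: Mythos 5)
The paper gives no proof of this statement: it is quoted verbatim from Kashiwara--Schapira \cite{KS90}*{Theorem 11.3.3} and used as an external input, so there is no ``paper's own proof'' to compare against. Your sketch is a reasonable blueprint for a proof that one might find in the literature, but a few points are worth flagging.

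First, the reduction to a common Whitney stratification and the easy inclusion $SS(\DR_X(M))\subset Ch(M)$ are both fine; the Spencer/Koszul argument you describe is one standard route, and the inclusion indeed needs neither holonomicity nor regularity. However, your claim that regularity ``enters'' essentially in the reverse inclusion is imprecise as a mathematical fact: Kashiwara's theorem that $\DR_X(M)$ is $\C$-constructible with $SS(\DR_X(M))=Ch(M)$ holds for \emph{all} holonomic $\D_X$-modules, regular or not. Regularity is needed for the Riemann--Hilbert equivalence itself, and for the particular tool you invoke (compatibility of the de Rham functor with specialization/$V$-filtration), but it is not intrinsic to the equality of singular support and characteristic variety. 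So your argument, if filled in, would establish the theorem under a hypothesis that is stronger than necessary---which is perfectly adequate for the paper's purposes, since only the regular case is used.

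Second, the technical heart of your argument---that the generic rank of the specialization equals the multiplicity of $T^*_{X_\alpha}X$ in $Ch(M)$, hence is positive, and that nonvanishing of $\phi_g\DR_X(M)$ forces $(x_0,\xi_0)\in SS(\DR_X(M))$---is genuinely the crux and is stated in black-box form rather than argued. The identification of $\phi_g\DR_X(M)$ with the de Rham complex of a specialization of $M$, and the multiplicity statement via the local index theorem for holonomic modules, are theorems of Kashiwara, Malgrange, and Mebkhout of comparable depth to the one you are trying to prove. For what it's worth, the proof in \cite{KS90} proceeds through a different microlocal route (propagation of solutions via the Cauchy--Kowalevsky--Kashiwara theorem for the easy inclusion, and a finer analysis of the solution complex for the converse), not through vanishing cycles and $V$-filtrations; your sketch is closer in spirit to the $\D$-module-theoretic proofs one finds in the Riemann--Hilbert literature than to \cite{KS90} itself. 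None of this is a gap per se---you are honest about invoking a black box---but you should be aware that the black box is essentially coextensive with the theorem in the regular case.
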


We thereby explain a general theory of the singular support of an object in \(D^b_c(X)\) in terms of a Whitney stratification \cite{Whitney65} of the analytic variety \(X\). Then we describe the characteristic variety of \(\IC_X\).

\begin{defn}[\cite{HTT}*{Definition E.3.7.}]
For a complex analytic space (complex algebraic variety) \(X\) embedded in a complex manifold \(W\), a stratification \(X=\coprod_{\alpha\in S}X_\alpha\) by locally closed smooth analytic (algebraic) subsets \(X_\alpha\) is called a \textit{Whitney Stratification} if it satisfies the following Whitney conditions (a) and (b):
\begin{enumerate}[(a)]
    \item For every sequence \(p_i\in X_\alpha\) of points converging to a point \(q\in X_\beta\) (\(\alpha\neq\beta\)) such that the limit \(T\) of the tangent spaces \(T_{p_i}X_\alpha\) exists, we have \(T_qX_\beta\subset T\).
    \item For all sequences of points \(p_i\in X_\alpha\) and \(q_i\in X_\beta\) (\(\alpha\neq\beta\)) with limit \(q\in X_\beta\) such that, in a local chart of \(y\), the limit \(T\) of \(T_{p_i}X_\alpha\) and the limit \(l\) of \(\overline{p_iq_i}\) exist, we have \(l\subset T\).
\end{enumerate}
\end{defn}

It is easy to see that the Whitney conditions (a) and (b) are independent of the choice of an embedding in a complex manifold. In addition, given an embedding into a complex manifold \(X\subset W\) and a Whitney stratification \(X=\coprod_{\alpha\in S}X_\alpha\), we have an induced Whitney stratification \(W=(W\setminus X)\cup\coprod_{\alpha\in S}X_\alpha\).

\begin{thm}[\cite{KS90}*{Proposition 8.4.1.}]
\label{thm: Kashiwara Shapira singular support}
Let \(X=\coprod_{\alpha\in S}X_\alpha\) be a Whitney stratification embedded in a complex manifold \(W\) and let \(\F^\cdot \in D_c^b(X)\) be an object in the bounded derived category of constructible sheaves on \(X\). Then the following conditions are equivalent:
\begin{itemize}
    \item For all \(j\in \Z\), all \(\alpha\in S\), the sheaves \(\mathscr H^j(F^\cdot)|_{X_\alpha}\) are locally constant.
    \item \(SS(F^\cdot)\subset \coprod_{\alpha\in S}T_{X_\alpha}^*W\).
\end{itemize}
Here, \(SS(F^\cdot)\) is the singular support (i.e.  micro-support) of \(F^\cdot\) and \(T_{X_\alpha}^*W\) is a conormal bundle of \(X_\alpha\) in \(W\).
\end{thm}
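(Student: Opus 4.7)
The plan is to deduce both implications from the microlocal non-characteristic deformation (propagation) lemma of Kashiwara-Schapira, with the two Whitney conditions playing distinct roles: condition (a) controls the infinitesimal transversality of covectors to nearby strata, while condition (b) provides the topological triviality needed to glue vanishing statements across strata.

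For the easier direction $(\Leftarrow)$: assume $SS(\F^\cdot)\subset \coprod_\alpha T_{X_\alpha}^*W$ and fix $x_0\in X_\alpha$. Choose local analytic coordinates on $W$ centered at $x_0$ in which $X_\alpha$ is a linear subspace, and let $\phi:W\to \R$ be a linear coordinate along $X_\alpha$. Then $d\phi(x)$ is never conormal to $X_\alpha$, so $d\phi(x)\notin SS(\F^\cdot)$ in a neighborhood of $x_0$. The non-characteristic deformation lemma yields $(R\Gamma_{\{\phi\ge t\}}\F^\cdot)_x\isom 0$ for $x$ near $x_0$ and $t$ in a small interval, which is equivalent to local constancy of $\mathscr H^j(\F^\cdot)$ in the $\phi$-direction. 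Running this over a basis of tangent directions in $X_\alpha$ gives local constancy of each $\mathscr H^j(\F^\cdot)|_{X_\alpha}$.

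For the substantive direction $(\Rightarrow)$: suppose each $\mathscr H^j(\F^\cdot)|_{X_\alpha}$ is locally constant and take $(x_0,\xi_0)\in T^*W$ with $x_0\in X_\alpha$ and $\xi_0$ not annihilating $T_{x_0}X_\alpha$. Extend $\xi_0$ to a $C^1$ function $\phi$ near $x_0$ with $d\phi(x_0)=\xi_0$ and $\phi(x_0)=0$. Whitney (a) ensures that after shrinking the neighborhood, $d\phi$ is non-conormal also to every nearby stratum $X_\beta$ with $x_0\in\overline{X_\beta}$: indeed, if $p_i\in X_\alpha$ converges to $q\in X_\beta$ with $T_{p_i}X_\alpha\to T$, then $T_qX_\beta\subset T$, so non-triviality of $\xi_0|_{T_{x_0}X_\alpha}$ transfers to non-triviality of $d\phi(q)|_{T_qX_\beta}$ for $q$ near $x_0$. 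To conclude $(x_0,\xi_0)\notin SS(\F^\cdot)$, I must show $(R\Gamma_{\{\phi\ge 0\}}\F^\cdot)_{x_0}=0$, which I do by induction on the depth of the stratification. The distinguished triangle relating $\F^\cdot$ to its restrictions along $X_\alpha$ and to its costalks on deeper strata reduces the vanishing to two ingredients: on each single stratum, local constancy of $\mathscr H^j(\F^\cdot)|_{X_\beta}$ plus transversality of $\phi$ to $X_\beta$ gives the cone-contractibility vanishing directly; the gluing across strata is the inductive step.

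The main obstacle is this inductive gluing, since it requires controlling costalk complexes on several strata simultaneously in a single neighborhood of $x_0$. This is precisely where Whitney (b) enters, via Thom's first isotopy lemma and Mather's control data: (b) guarantees that a cone neighborhood of $x_0$ admits a controlled tubular structure along each $X_\beta$, so that $\F^\cdot$ is constructible with respect to a product-like stratification on that neighborhood. This reduces the vanishing to a finite iteration of the single-stratum argument. Condition (a) alone handles the pointwise transversality but does not suffice for the topological propagation; condition (b) is exactly what makes the inductive step work.
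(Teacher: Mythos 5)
The paper does not prove this statement; it is a direct citation of Kashiwara--Schapira, Proposition 8.4.1, and the remark immediately following is what makes the citation legitimate. Kashiwara--Schapira establish the equivalence for \emph{$\mu$-stratifications}, a $C^1$-level strengthening of the Whitney conditions, and the paper bridges the gap via the chain of equivalences, valid for \emph{complex analytic} stratifications, that Whitney (b) is equivalent to Kuo--Verdier (w) (Teissier, Henry--Merle) and that (w) is equivalent to the microlocal condition ($\mu$) (Trotman). That reduction is the whole content of the paper's treatment of the theorem.

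Your proposal instead attempts a direct proof from the Whitney conditions, and there are genuine gaps. In the easy direction you write that since $d\phi(x)$ is never conormal to $X_\alpha$ one has $d\phi(x)\notin SS(\F^\cdot)$ near $x_0$; but the hypothesis only gives $SS(\F^\cdot)\subset\coprod_\beta T^*_{X_\beta}W$, so you must also exclude $d\phi(x)$ from the conormals of the other strata $X_\beta$ accumulating at $x_0$ --- the Whitney (a) limiting argument you describe in the other direction is exactly what is needed here and is omitted. You should also recall that to show $(x_0,\xi_0)\notin SS(\F^\cdot)$ one must obtain the vanishing $(R\Gamma_{\{\psi\ge 0\}}\F^\cdot)_{x_1}\simeq 0$ for all $C^1$ test functions $\psi$ with $(x_1,d\psi(x_1))$ in a neighborhood of $(x_0,\xi_0)$, not just for one fixed $\phi$ at $x_0$.

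The more serious problem is the inductive gluing in the $(\Rightarrow)$ direction, which is precisely where Kashiwara--Schapira use the $\mu$-condition. Your appeal to Thom's first isotopy lemma and Mather's control data cannot replace it: Thom--Mather yields only $C^0$ (topological) local triviality of the stratification, whereas micro-support is defined through $C^1$ test functions and its propagation estimates for $Rj_*$ across a stratum boundary hinge on the $C^1$-level conormal estimates packaged in condition (w) or ($\mu$). In the real category Whitney (b) is strictly weaker than (w) (Brodersen--Trotman constructed Whitney-regular stratifications that fail (w)), so a proof that used only Whitney (a)+(b) and nothing about complex analyticity would be claiming more than is known. Your argument nowhere invokes the complex analytic hypothesis, so it cannot be complete as written; the correct route is the one the paper takes, namely to upgrade Whitney (b) to (w) and then to ($\mu$) using complex analyticity, after which Kashiwara--Schapira's proof applies verbatim.
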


\begin{rmk}
Kashiwara and Schapira \cite{KS90} originally stated the theorem for \(\mu\)-stratification \(X=\coprod_{\alpha\in S}X_\alpha\) over real manifolds, which is stronger than the Whitney conditions (a) and (b). However, for complex analytic stratifications, the Whitney condition (b) is equivalent to the Kuo-Verdier condition (w) by Teissier \cite{Teissier82} and Henry-Merle \cite{HM83}. The equivalence of the Kuo-Verdier condition (w) and the microlocal condition (\(\mu\)) is proven by Trotman \cite{Trotman89}, so the theorem holds for Whitney stratifications.
\end{rmk}

We note that all of the above hold in the context of complex algebraic varieties. In the following corollary, we state the classical result on the characteristic variety of the intersection complex, whose proof is included for completeness.

\begin{cor}
\label{cor: characteristic variety of intersection complex}
For a complex algebraic variety \(X\) with a Whitney stratification \(X=\coprod_{\alpha\in S}X_\alpha\) embedded in a smooth variety \(W\), we have
\[
Ch(\IC_X)\subset\coprod_{\alpha\in S}T^*_{X_\alpha}W. 
\]
\end{cor}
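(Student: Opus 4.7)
The plan is to deduce the inclusion by combining the two results stated just above it: the Riemann--Hilbert identification of the characteristic variety of a regular holonomic D-module with the singular support of its de Rham complex (Theorem \ref{thm: singular support and characteristic variety}), together with Kashiwara--Schapira's bound on the singular support of a complex which is constructible with respect to a Whitney stratification (Theorem \ref{thm: Kashiwara Shapira singular support}). Viewing $\IC_X$ as a regular holonomic $\D_W$-module supported on $X$, its de Rham complex is the intersection cohomology perverse sheaf $\IC_X$ on $W$, so $Ch(\IC_X) = SS(\IC_X)$ and the question reduces to controlling the singular support on the sheaf side.

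To apply Theorem \ref{thm: Kashiwara Shapira singular support}, I would first enlarge the given Whitney stratification of $X$ to a Whitney stratification of $W$ by adjoining the open stratum $W \setminus X$. On that stratum, the cohomology sheaves of $\IC_X$ vanish and are therefore trivially locally constant. On each stratum $X_\alpha$, the cohomology sheaves of $\IC_X$ are locally constant by the classical constructibility of the intersection cohomology complex with respect to any Whitney stratification of $X$ (which I would quote from \cite{KS90}). Theorem \ref{thm: Kashiwara Shapira singular support} then gives
\[
SS(\IC_X) \subset T^*_{W \setminus X} W \cup \coprod_{\alpha\in S} T^*_{X_\alpha} W,
\]
and since $\IC_X$ has empty support off of $X$, the contribution from the zero section $T^*_{W \setminus X} W$ cannot intersect $SS(\IC_X)$, leaving exactly the desired inclusion.

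The main obstacle, if there is one at all, is verifying the constructibility of $\IC_X$ with respect to the given Whitney stratification. This is a classical but nontrivial input that must be cited rather than re-derived; once it is granted, the remainder of the argument is a formal translation between the D-module and perverse sheaf sides via the Riemann--Hilbert correspondence, together with the bookkeeping observation that enlarging the stratification of $X$ to a stratification of $W$ contributes nothing because $\IC_X$ is supported on $X$.
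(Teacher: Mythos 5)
Your proof is correct and follows essentially the same route as the paper: pass from $Ch(\IC_X)$ to $SS(\IC_X)$ via the Riemann--Hilbert correspondence (Theorem~\ref{thm: singular support and characteristic variety}), observe that the cohomology sheaves of $\IC_X$ are locally constant along each Whitney stratum (by the Deligne--Goresky--MacPherson construction), and apply the Kashiwara--Schapira criterion (Theorem~\ref{thm: Kashiwara Shapira singular support}). Your extra bookkeeping of adjoining $W\setminus X$ as a stratum and then discarding its conormal contribution is already absorbed into the paper's formulation of that criterion for objects of $D^b_c(X)$ embedded in $W$, so the two arguments coincide.
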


\begin{proof}
From the definition of the intersection complex via a minimal extension (i.e. Deligne-Goresky-MacPherson extension), \(\mathscr H^j(\IC_X)|_{X_\alpha}\) is locally constant for all \(\alpha\in S\), \(j\in \Z\). Therefore, by Theorem \ref{thm: singular support and characteristic variety} and \ref{thm: Kashiwara Shapira singular support}, we obtain the result.
\end{proof}

At last, combining the results above, we obtain the following proposition on the support of the graded complex associated to \(\IC_X^H\). Notice that the support of an object in \(D^bG(f^*\gr^F\D_Y)\) is a subvariety in \(X\times_YT^*_Y\), since it is the relative Spec of the sheaf of \(\O_X\)-algebra \(f^*\gr^F\D_Y\) on \(X\).

\begin{prop}
\label{prop: support of graded complex of IC}
Let \(f:X\to Y\) be a projective Whitney equisingular morphism. Then, we have
\[
\supp \left(\gr^F\DR_{X/Y}(\IC_X^H)\right)=X\times_YY\subset X\times_YT^*_Y.
\]
In other words, the support is the zero section of the vector bundle \(f^*T^*_Y\) on \(X\).
\end{prop}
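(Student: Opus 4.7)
The plan is to reduce the support calculation to the well-understood behavior of the characteristic variety $Ch(\IC_X)\subset T^*W$, where $X\hookrightarrow W$ is a local closed embedding into a smooth variety $W$ admitting a smooth morphism $p:W\to Y$ extending $f$. Since the statement is local on $Y$, such factorizations are always available, and by definition $\gr^F\DR_{X/Y}(\IC_X^H)=\gr^F\DR_{W/Y}(\IC_X^H)$ computed for the $\D_W$-module $\IC_X^H$. The key algebraic observation is that the graded transfer module is
\[
\gr^F\D_{W\to Y}\isom \O_{p^*T^*Y}
\]
as a $\gr^F\D_W=Sym\,T_W$-algebra, where the embedding $p^*T^*Y\hookrightarrow T^*W$ is dual to the surjection $T_W\twoheadrightarrow p^*T_Y$; this comes from a direct computation with the filtration in \eqref{eqn: relative Spencer resolution} together with Remark \ref{rmk: boundary morphism}. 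Consequently, for any coherently filtered $\D_W$-module $M$, the complex $\gr^F\DR_{X/Y}(M)$ is the derived restriction of $\gr^F M$ (a coherent sheaf on $T^*W$ supported on $Ch(M)$) to the subbundle $p^*T^*Y$, so that
\[
\supp\bigl(\gr^F\DR_{X/Y}(M)\bigr)=Ch(M)\cap p^*T^*Y,
\]
viewed inside $W\times_Y T^*Y$.

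I would then apply this to $M=\IC_X^H$. The Riemann--Hilbert correspondence (Theorem \ref{thm: singular support and characteristic variety}) together with Corollary \ref{cor: characteristic variety of intersection complex} yields
\[
Ch(\IC_X)\subset\coprod_{\alpha\in S}T^*_{X_\alpha}W
\]
for the Whitney stratification $\{X_\alpha\}$ provided by the hypothesis. The linear-algebraic step where Whitney equisingularity enters is this: at a point $x\in X_\alpha$, a covector $dp^*(\xi)\in T^*_xW$ with $\xi\in T^*_{f(x)}Y$ lies in $T^*_{X_\alpha,x}W$ exactly when $\xi$ annihilates $df_x(T_xX_\alpha)$. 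Since $f|_{X_\alpha}:X_\alpha\to Y$ is smooth by Whitney equisingularity, $df_x(T_xX_\alpha)=T_{f(x)}Y$, forcing $\xi=0$. Taking the union over $\alpha$, we conclude $\supp\gr^F\DR_{X/Y}(\IC_X^H)\subseteq X\times_Y Y$.

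For the reverse containment, Remark \ref{rmk: lowest filtration of intersection complex} identifies the piece in the minimal graded degree as $\gr^F_{p(\IC_X^H)}\DR_{X/Y}(\IC_X^H)\isom\mu_*\omega_{\widetilde X}$, a nonzero coherent $\O_X$-module (torsion-free of rank one on each irreducible component of $X$). Since nothing sits in lower graded degree for $p^*T_Y$ to map into, this piece survives nontrivially upon fiberwise restriction to the zero section, placing every point of $X\times_Y Y$ in the support. The main obstacle in this plan is the bookkeeping required in the first paragraph: one must carefully track the filtration $F_k((N\otimes\Omega^i_W)\otimes p^{-1}\D_Y)=\sum_j F_{k+i-j}N\otimes\Omega^i_W\otimes p^{-1}F_j\D_Y$ through the filtered Spencer complex and verify that its associated graded realizes the scheme-theoretic restriction along $p^*T^*Y\hookrightarrow T^*W$; once this is in hand, the remaining Whitney-equisingular analysis is straightforward.
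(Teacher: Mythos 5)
Your proposal follows essentially the same route as the paper's proof: factor locally through a smooth $p:W\to Y$, use the Whitney stratification to bound $Ch(\IC_X)$ by the union of conormal bundles $T^*_{X_\alpha}W$, observe that $p^*T^*Y\cap T^*_{X_\alpha}W$ reduces to the zero section over $X_\alpha$ because $f|_{X_\alpha}$ is submersive, and then establish the reverse containment by identifying the lowest graded piece with $\mu_*\omega_{\widetilde X}$ via Proposition \ref{prop: lowest Hodge filtration of intersection complex} and Remark \ref{rmk: lowest filtration of intersection complex}. The one point where you deviate is in asserting the \emph{equality} $\supp\bigl(\gr^F\DR_{X/Y}(M)\bigr)=Ch(M)\cap p^*T^*Y$, which is what you flag as the main obstacle: establishing it would require verifying that $\gr^F$ commutes with the derived tensor product against $\D_{W\to Y}$ (a strictness statement about Saito's filtered derived category). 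But you never use that equality: the forward containment only needs $\supp\subseteq Ch(\IC_X)\cap p^*T^*Y$, which is immediate because a derived restriction cannot enlarge support (the terms of the relative Spencer complex are obtained by restricting those of the absolute one to $p^*T^*Y\subset T^*W$, so the cohomology is supported where $\gr^F\IC_X^H$ is), and your reverse containment goes through the lowest Hodge piece independently. So the bookkeeping you worry about can be replaced by this cheaper one-sided observation, which is exactly what the paper does.
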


\begin{proof}
Assume that \(f\) factors through a smooth embedding \(X\subset W\), where \(W\) is smooth over \(Y\). Since \(f\) is Whitney equisingular, there exists a Whitney stratification
\[
X=\coprod_{\alpha\in S}X_\alpha
\]
such that \(f|_{X_\alpha}\) is smooth for all \(\alpha\in S\). Notice that the filtered Spencer resolution of \(\IC_X^H\) as a filtered \(\D_W\)-module in \(W\) is supported in \(Ch(\IC_X)\subset T^*_W\). From the construction of the functor \(\gr^F\DR_{X/Y}\) in Section \ref{sec: derived pushforward and associated graded} and \ref{sec: intersection complex}, we have
\[
\supp \left(\gr^F\DR_{X/Y}(\IC_X^H)\right)\subset \left(W\times_Y T^*_Y\right)\cap Ch(\IC_X),
\]
because the functor \(\gr^F\DR_{X/Y}(\IC_X^H)\) pulls back the Spencer resolution to \(W\times_Y T^*_Y\). From the fact that \(X_\alpha\) is smooth over \(Y\), we can easily check that
\[
\left(W\times_Y T^*_Y\right)\cap T_{X_\alpha}^*W=X_\alpha\times_YY\subset W\times_Y T^*_Y.
\]
Therefore, Corollary \ref{cor: characteristic variety of intersection complex} implies the inclusion
\[
\supp \left(\gr^F\DR_{X/Y}(\IC_X^H)\right)\subset X\times_YY,
\] and the equality follows from Proposition \ref{prop: lowest Hodge filtration of intersection complex} and Remark \ref{rmk: lowest filtration of intersection complex}.
\end{proof}

\subsection{Properties of Whitney equisingular morphisms}
\label{sec: Whitney equisingular morphism}

In Definition \ref{defn: Whitney equisingularity}, Whitney equisingularity seems to depend on the choice of a Whitney stratification on \(X\). However, Teissier proves the existence of the \textit{coarsest} Whitney stratification, which allows us to check Whitney equisingularity for this stratification. For a relatively modern treatment of the theorem, see Lipman \cite{Lipman00}*{Proposition 2.7}.

\begin{thm}[Teissier \cites{Teissier82}]
\label{thm: coarsest Whitney stratification}
There exists the coarsest (i.e. minimal) Whitney stratification \((X_\alpha)_{\alpha\in S}\) for a complex analytic space (or a complex algebraic variety) \(X\). In other words, every Whitney stratification of \(X\) is a refinement of \((X_\alpha)_{\alpha\in S}\).

Moreover, for a complex algebraic variety, the coarsest analytic Whitney stratification is algebraic.
\end{thm}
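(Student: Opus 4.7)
The plan is to construct the coarsest Whitney stratification intrinsically via equisingularity invariants, and then to deduce the algebraic statement from the analytic one.

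First, I would establish the existence of \emph{some} Whitney stratification by Noetherian induction on dimension. The essential input is Whitney's original theorem: for any pair of locally closed smooth analytic subvarieties \((A,B)\) of an ambient complex manifold with \(B\subset \overline A\), the set of points in \(B\) at which condition (b) fails with respect to \(A\) is a proper closed analytic (resp.\ algebraic) subset of \(B\). Starting from the decomposition \(X = X^{\mathrm{sm}} \sqcup \mathrm{Sing}(X)\) and iteratively removing the (b)-bad loci with respect to all previously constructed strata, the process terminates after finitely many steps by dimension reasons and yields a Whitney stratification.

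Next, to obtain the coarsest one, I would invoke Teissier's numerical characterization of Whitney's conditions. To each point \(x\in X\) one attaches the sequence of local polar multiplicities of Teissier, which generalize the Milnor sequence \(\mu^{(*)}_x\) beyond the isolated hypersurface setting. These are upper semi-continuous, \(\Z\)-valued invariants of the germ \((X,x)\), so their common level sets define a finite constructible partition \(X = \coprod_\alpha X_\alpha\). Teissier's theorem then asserts that, after passing to connected components of smooth loci, this partition is a Whitney stratification. Conversely, constancy of all polar multiplicities along a smooth stratum is forced by Whitney's condition (b), so every Whitney stratification of \(X\) must refine \((X_\alpha)_{\alpha\in S}\); this is the minimality claim. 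The main obstacle lies precisely in this step: proving the numerical characterization of condition (b) in full generality is nontrivial and relies on passing through the Kuo--Verdier condition (w) via the Henry--Merle--Teissier equivalence, together with a direct polar-curve computation relating (b) to the constancy of the multiplicity sequence.

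Finally, for the algebraic statement I would observe that polar varieties are constructed algebraically, as closures of critical loci of generic linear projections, and their multiplicities at a point are algebraic local invariants. Consequently, when \(X\) is a complex algebraic variety, each level set \(X_\alpha\) of the polar multiplicity sequence is a constructible algebraic subset, and its connected components (in the Zariski sense) form an algebraic stratification; the previous paragraph then identifies this with the coarsest analytic Whitney stratification, which is therefore algebraic.
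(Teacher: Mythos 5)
The paper does not prove this statement; it records it as Teissier's theorem from \emph{Vari\'et\'es polaires II} and points to Lipman's survey (Proposition 2.7) for a modern treatment, so there is no internal argument to measure against. Your sketch correctly identifies Teissier's strategy via polar multiplicities, and two of your steps are sound as written: the minimality direction (Whitney (b) forces constancy of the polar multiplicity sequence along each stratum, so every Whitney stratification refines its level sets) and the algebraicity argument (polar varieties arise as closures of critical loci of generic linear projections, hence are algebraic when $X$ is).

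There is, however, a gap in the construction of the stratification itself. Teissier's numerical criterion is a statement about pairs: for a smooth locally closed $Y\subset X$, the Whitney conditions for $(X^{\mathrm{sm}},Y)$ hold along $Y$ if and only if $M^*_y(X)$ is locally constant on $Y$. This controls the conditions between the open stratum and each lower one, but says nothing directly about two strata $X_\alpha\supset X_\beta$ both lying in $\mathrm{Sing}(X)$: there the relevant invariant is $M^*_y(\overline{X_\alpha})$, not $M^*_y(X)$, and the level sets of $M^*_\cdot(X)$ alone give no control over it. Relatedly, the step ``after passing to connected components of smooth loci'' silently discards the singular locus of each level set, and you do not say where those points go. Teissier's actual construction is recursive: one stratifies by $M^*_\cdot(X)$, then re-stratifies the closure of each non-open piece by its own polar multiplicity sequence, and so on, and one must verify that the recursion terminates, that the pieces are smooth and locally closed, and that the frontier condition holds. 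Spelling out this iteration is the real content of the existence half of the theorem; your appeal to ``Teissier's theorem'' is doing that work without acknowledging it. Once the recursive stratification is in hand, your minimality argument goes through applied to the full recursive data, since Whitney (b) for a given stratification forces constancy of $M^*$ of each successive closure along each of its strata.
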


\begin{cor}
\label{cor: coarsest Whitney equisingular}
A morphism \(f:X\to Y\) is Whitney equisingular if and only if \(f|_{X_\alpha}:X_\alpha\to Y\) is smooth for all \(\alpha\in S\), where \(X=\coprod_{\alpha\in S}X_\alpha\) is the coarsest Whitney stratification of \(X\). 
\end{cor}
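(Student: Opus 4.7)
The plan is to verify both directions of the stated equivalence. The ``if'' direction is immediate from Definition \ref{defn: Whitney equisingularity}, since the coarsest Whitney stratification is itself a valid choice of Whitney stratification on $X$. All of the content lies in the ``only if'' direction.

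For the forward implication, I would begin by unpacking Whitney equisingularity: by hypothesis there exists \emph{some} Whitney stratification $X = \coprod_{\beta\in T} U_\beta$ on which every $f|_{U_\beta}$ is smooth. Invoking Theorem \ref{thm: coarsest Whitney stratification}, this stratification refines the coarsest one, so each $X_\alpha$ decomposes as $X_\alpha = \coprod_{\beta\in T_\alpha} U_\beta$ for a suitable index set $T_\alpha \subset T$. The substantive step is then to transfer smoothness from the finer strata $U_\beta$ up to the coarser stratum $X_\alpha$.

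For this transfer, I would work pointwise with tangent spaces. Fix $x \in X_\alpha$, and let $\beta \in T_\alpha$ be the unique index with $x \in U_\beta$. The inclusion $U_\beta \hookrightarrow X_\alpha$ of smooth subvarieties produces an inclusion $T_x U_\beta \subset T_x X_\alpha$. Smoothness of $f|_{U_\beta}$ at $x$ says that $df_x \colon T_x U_\beta \to T_{f(x)} Y$ is surjective, so $df_x \colon T_x X_\alpha \to T_{f(x)} Y$ is surjective as well. Since $X_\alpha$ is smooth (as a stratum of a Whitney stratification) and the differential of $f|_{X_\alpha}$ is surjective at every point, the Jacobian criterion gives that $f|_{X_\alpha}$ is smooth.

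I do not anticipate a genuine obstacle: once Theorem \ref{thm: coarsest Whitney stratification} is available, the proof collapses to tangent-space linear algebra together with the standard Jacobian criterion for smoothness. The only subtlety worth noting is that smoothness of $f|_{X_\alpha}$ on just the top-dimensional refining stratum of $X_\alpha$ would not suffice at lower-dimensional points; one genuinely uses that $f|_{U_\beta}$ is smooth for \emph{every} $\beta\in T_\alpha$, but the pointwise argument above handles all strata uniformly.
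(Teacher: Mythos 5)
Your proof is correct and takes essentially the same approach as the paper: both use Theorem \ref{thm: coarsest Whitney stratification} to realize the given Whitney stratification as a refinement of the coarsest one, and then transfer submersivity pointwise from the finer stratum $U_\beta$ to the coarser stratum $X_\alpha$ containing it. You merely spell out the tangent-space inclusion $T_xU_\beta \subset T_xX_\alpha$ and the Jacobian criterion, which the paper compresses into a single line.
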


\begin{proof}
Suppose there exists a Whitney stratification \(X=\coprod_{\alpha'\in S'}X_{\alpha'}\) such that \(X_{\alpha'}\) is smooth over \(Y\). Let \(x\in X_\alpha\). Then there exists \(\alpha'\in S\) such that \(x\in X_{\alpha'}\subset X_\alpha\). Since \(f|_{X_{\alpha'}}:X_{\alpha'}\to Y\) is a submersion at \(x\), \(f|_{X_\alpha}:X_\alpha\to Y\) is a submersion at \(x\) as well. Therefore, \(f|_{X_\alpha}\) is smooth. 
\end{proof}

It is important to notice that every morphism is generically Whitney equisingular. Indeed, for an arbitrary morphism \(f:X\to Y\), there exists an open subvariety \(V\subset Y\) such that \(f|_{f^{-1}(V)}\) is Whitney equisingular, by the generic smoothness of \(f|_{X_\alpha}\) for all \(\alpha\). Additionally, as an immediate consequence of Corollary \ref{cor: coarsest Whitney equisingular}, Whitney equisingularity is a local property: if \(X\to Y\) is locally Whitney equisingular, i.e. Whitney equisingular on a neighborhood of \(x\in X\) and \(y\in Y\), then it is globally Whitney equisingular.

Furthermore, the following shows that Whitney equisingularity is preserved under taking a fiber product.

\begin{prop}
\label{prop: Whitney equisingularity: fiber product}
Let \(f:X_1\to Y\) and \(f_2:X_2\to Y\) be a Whitney equisingular morphism. Then \(f_1\times_Y f_2:X_1\times_Y X_2\to Y\) is Whitney equisingular.
\end{prop}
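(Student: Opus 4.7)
The plan is to produce a Whitney stratification of $X_1 \times_Y X_2$ whose strata are smooth over $Y$, so that the resulting morphism is Whitney equisingular per Definition~\ref{defn: Whitney equisingularity}. Given Whitney stratifications $X_i = \coprod_{\alpha \in S_i} X_{i,\alpha}$ with $f_i|_{X_{i,\alpha}}$ smooth for every $\alpha$, the natural candidate is
\[
X_1 \times_Y X_2 = \coprod_{(\alpha,\beta) \in S_1 \times S_2} X_{1,\alpha} \times_Y X_{2,\beta}.
\]
Each stratum $X_{1,\alpha} \times_Y X_{2,\beta}$ is smooth as a fiber product of smooth morphisms over a smooth base, and the restriction of $f_1 \times_Y f_2$ to this stratum factors as $X_{1,\alpha} \times_Y X_{2,\beta} \to X_{1,\alpha} \to Y$, a composition of smooth morphisms. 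So the required smoothness of each stratum over $Y$ is immediate.

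The substantive step is to verify Whitney's conditions (a) and (b) for this stratification. I would fix closed embeddings $X_i \hookrightarrow W_i^0$ into smooth varieties and then re-embed $X_i \hookrightarrow W_i := W_i^0 \times Y$ via $x \mapsto (x, f_i(x))$; the projection $W_i \to Y$ is smooth and restricts to $f_i$ on $X_i$. Set $N := W_1 \times_Y W_2 \subset W_1 \times W_2$: this is smooth, being a fiber product of smooth morphisms over a smooth base, and by construction $X_1 \times_Y X_2 = (X_1 \times X_2) \cap N$ inside the smooth ambient $W_1 \times W_2$.

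Now I invoke two classical facts about Whitney stratifications. First, the product stratification $\{X_{1,\alpha} \times X_{2,\beta}\}$ is a Whitney stratification of $X_1 \times X_2 \subset W_1 \times W_2$. Second, each product stratum is transversal to $N$ in the ambient: the restriction of the smooth projection $W_1 \times W_2 \to Y \times Y$ to $X_{1,\alpha} \times X_{2,\beta}$ is the product of the smooth submersions $f_i|_{X_{i,\alpha}}$, which is transversal to $\Delta_Y \subset Y \times Y$, and a direct dimension count yields $T_p(X_{1,\alpha} \times X_{2,\beta}) + T_p N = T_p(W_1 \times W_2)$ at every point $p$ of the intersection. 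Applying the classical principle that a transversal intersection of a Whitney-stratified subspace with a smooth submanifold of the ambient inherits a Whitney stratification, the induced stratification on $X_1 \times_Y X_2$ is exactly $\{X_{1,\alpha} \times_Y X_{2,\beta}\}$, which is therefore Whitney.

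The main obstacle, such as it is, is the clean invocation of the two classical stratification facts (product and transversal intersection both preserve Whitney's conditions), which are standard in the literature (see e.g.\ Mather's lectures or Goresky--MacPherson). The small technical trick in the present setup is the ambient embedding $W_i = W_i^0 \times Y$: this makes $f_i$ extend to a global smooth projection on the smooth ambient, so that the transversality check can be carried out there rather than only on strata.
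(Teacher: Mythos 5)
Your proof is correct and follows essentially the same route as the paper: stratify $X_1 \times_Y X_2$ by the products of the given strata, realize $W_1 \times_Y W_2$ as a smooth closed submanifold of $W_1 \times W_2$, and invoke that transversal pullback (the paper's Proposition \ref{prop: transversal pullback}) preserves Whitney stratifications. The only cosmetic differences are that you realize the smooth extension of $f_i$ to the ambient globally via the graph embedding $W_i = W_i^0 \times Y$ where the paper instead appeals to locality of Whitney equisingularity, and you intersect $N$ with the Whitney-stratified $X_1 \times X_2$ rather than with the full product stratification of $W_1 \times W_2$.
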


Before proving this statement, we state the stability of the Whitney conditions (a) and (b) under a transversal pullback.

\begin{prop}[\cite{Schurmann03}*{Section 4.3.}]
\label{prop: transversal pullback}
Let \(f: W\to V\) be a holomorphic mapping of complex manifolds. Suppose \(f\) is transversal (i.e. non-characteristic) to a Whitney stratification \(V=\coprod_{\alpha\in S}V_{\alpha}\): for every point \(w\in W\),
\[
df(w)(T_wW)+T_{f(w)}V_\beta=T_{f(w)}V
\]
where \(V_{\beta}\) is a stratum containing \(f(w)\). Then, \(W=\coprod_{\alpha\in S}f^{-1}(V_\alpha)\) is a Whitney stratification.
\end{prop}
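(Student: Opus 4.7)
The plan is to verify both Whitney conditions (a) and (b) for the decomposition $W_\alpha := f^{-1}(V_\alpha)$. The hypothesized transversality at every $w \in W$ is with the stratum containing $f(w)$, so standard transversality theory implies that each $W_\alpha$ is a locally closed smooth submanifold of $W$, with tangent space
\[
T_pW_\alpha = df(p)^{-1}(T_{f(p)}V_\alpha) \quad\text{for}\quad p \in W_\alpha.
\]
That $\{W_\alpha\}_{\alpha\in S}$ partitions $W$ is immediate from $\{V_\alpha\}$ partitioning $V$.

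For condition (a), take $p_i \in W_\alpha$ converging to $q \in W_\beta$ (with $\alpha \neq \beta$) such that $T_{p_i}W_\alpha \to T$. Set $x_i := f(p_i) \to y := f(q)$; after passing to a subsequence, $T_{x_i}V_\alpha \to T'$ in $T_yV$, and (a) for $V$ gives $T_yV_\beta \subset T'$. The core claim is
\[
T = df(q)^{-1}(T').
\]
The inclusion $T \subset df(q)^{-1}(T')$ follows by writing any $v \in T$ as a limit of $v_i \in T_{p_i}W_\alpha$ and pushing forward: $df(p_i)(v_i) \in T_{x_i}V_\alpha$ converges to $df(q)(v) \in T'$. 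The reverse inclusion is a dimension count: transversality of $f$ to $V_\beta$ at $q$ together with $T_yV_\beta \subset T'$ yields $df(q)(T_qW) + T' = T_yV$, so
\[
\dim df(q)^{-1}(T') = \dim W + \dim T' - \dim V = \dim W - \dim V + \dim V_\alpha = \dim T.
\]
The claim then gives $T_qW_\beta = df(q)^{-1}(T_yV_\beta) \subset df(q)^{-1}(T') = T$, establishing (a).

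For condition (b), I work in local charts near $q$ and take $p_i \in W_\alpha$, $q_i \in W_\beta$ with $q_i \to q$, $T_{p_i}W_\alpha \to T$, and secant direction $u_i := (q_i - p_i)/|q_i - p_i| \to u$, so $\ell$ is the line spanned by $u$. Setting $x_i = f(p_i)$, $y_i = f(q_i)$, a Taylor expansion at $p_i$ gives
\[
\frac{y_i - x_i}{|q_i - p_i|} = df(p_i)(u_i) + O(|q_i - p_i|) \longrightarrow df(q)(u).
\]
Passing to a further subsequence so $T_{x_i}V_\alpha \to T'$, I split into two cases. If $df(q)(u) = 0$, then $u \in \ker df(q) \subset df(q)^{-1}(T') = T$, hence $\ell \subset T$. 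If $df(q)(u) \neq 0$, then $y_i \neq x_i$ for large $i$ and the secant directions $(y_i - x_i)/|y_i - x_i|$ converge to a unit vector parallel to $df(q)(u)$; Whitney (b) for $V$ applied to $(x_i, y_i)$ then gives $df(q)(u) \in T'$, so $u \in df(q)^{-1}(T') = T$ and $\ell \subset T$.

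The main obstacle is the identification $T = df(q)^{-1}(T')$: this is the bridge that transfers the hypothesis on $V$ to the conclusion on $W$, and it rests on both the transversality of $f$ to $V_\beta$ at $q$ and on having already used Whitney (a) on $V$. The secondary subtlety in (b), namely the possible degeneration of the secant line on $V$ (either $y_i = x_i$ or $df(q)(u) = 0$), is harmless because $\ker df(q) \subset T$, so no application of Whitney (b) on $V$ is needed in that regime.
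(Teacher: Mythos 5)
Your proof is correct. The paper itself offers no argument for this proposition: it is quoted as a known result with a pointer to Schürmann's book, where it is obtained from the microlocal picture (maps that are non-characteristic for the conormal variety \(\coprod_\alpha T^*_{V_\alpha}V\) pull back \(\mu\)-stratifications, which for complex analytic stratifications coincide with Whitney stratifications). Your argument is instead the elementary, chart-level verification, and it is the standard one in the classical literature on transversal pullbacks of the Whitney conditions. The key bridge \(T=df(q)^{-1}(T')\) is handled exactly right: continuity of \(df\) gives one inclusion, and the reverse inclusion is a dimension count that correctly uses both transversality to \(V_\beta\) at the limit point and condition (a) on \(V\) (to replace \(T_yV_\beta\) by \(T'\) in the transversality identity). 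The treatment of condition (b), with the dichotomy according to whether \(df(q)(u)\) vanishes and with \(\ker df(q)\subset df(q)^{-1}(T')=T\) absorbing the degenerate secants, is also complete; the Taylor-expansion comparison of the secant directions upstairs and downstairs is the right way to transfer (b) from \(V\) to \(W\). What the microlocal route buys is uniformity (it handles non-characteristic pullback of arbitrary constructible data, which is how the paper uses it elsewhere), while your route is self-contained and makes visible exactly where each hypothesis enters. Two minor points worth a sentence if you write this up: verifying (b) in a single pair of charts suffices because condition (b) at a point is independent of the choice of local coordinates (standard but not completely obvious), and if your definition of stratification also includes local finiteness and the frontier condition, the former is immediate from continuity of \(f\) and the latter follows from (a) and (b) for connected strata.
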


\begin{proof}[Proof of Proposition \ref{prop: Whitney equisingularity: fiber product}]
Suppose \(i_j:X_j\hookrightarrow W_j\) is a closed embedding into a smooth variety and \(g_j:W_j\to Y\) is a smooth morphism such that \(f_j=g_j\circ i_j\) (\(j=1,2\)). Since Whitney equisingularity is a local property, we only need to prove the statement in such cases. Then, we have a Whitney stratification
\[
W_j=(W_j\setminus X_j)\cup \coprod_{\alpha_j\in S_j}X_{j,\alpha_j}
\]
such that \(X_{j,\alpha_j}\) is smooth over \(Y\) for all \(\alpha_j\in S_j\). Consider a closed embedding \(\rho:W_1\times_Y W_2\hookrightarrow W_1\times W_2\). It is easy to see that \(\rho\) is transversal to the Whitney stratification of \(W_1\times W_2\) induced by those of \(W_1\) and \(W_2\), described above. Therefore, by Proposition \ref{prop: transversal pullback}, 
\[
X_1\times_Y X_2=\coprod_{\alpha_1\in S_1, \alpha_2\in S_2}X_{1,\alpha_1}\times_Y X_{2,\alpha_2}
\]
is a Whitney stratification. Since \(X_{1,\alpha_1}\times_Y X_{2,\alpha_2}\) is smooth over \(Y\), we are done.
\end{proof}

\begin{rmk}
Using the same strategy, we can easily show that Whitney equisingularity is stable under a pullback; for a morphism \(g:Y'\to Y\) of smooth varieties, the induced morphism \(X\times_Y Y'\to Y'\) is Whitney equisingular if \(f:X\to Y\) is Whitney equisingular.
\end{rmk}

\section{Construction of Viehweg-Zuo sheaves}
\label{sec: construction of Viehweg-Zuo sheaf}

For a smooth quasi-projective variety \(V\) with a smooth projective compactification \(Y\) and boundary a simple normal crossing divisor \(D\), a Viehweg-Zuo sheaf is a big line bundle (or more generally, a coherent sheaf which is big in the sense of Viehweg)
\[
A\subset \Omega_Y(\log D)^{\tensor N}
\]
for some positive integer \(N\). The existence of a Viehweg-Zuo sheaf implies that \(V\) is of log general type by Campana-P\u aun \cite{CP19}*{Theorem 7.11}. We explain how a Viehweg-Zuo sheaf is constructed from a logarithmic Higgs sheaf on \(Y\) with poles along \(D\) endowed with some positivity properties.

A \textit{graded Higgs sheaf} \(\F_\bullet=\oplus_k\F_k\) on \(Y\) is a graded \(\O_Y\)-module with a Higgs structure
\[
\phi_\bullet:\F_{\bullet}\to \F_{\bullet+1}\tensor \Omega_Y
\]
such that \(\phi\wedge \phi:\F_{\bullet}\to \F_{\bullet+2}\tensor \Omega_Y^2\) is the zero morphism. Equivalently, \(\F_\bullet\) is a graded \(\gr^F\D_Y\)-module. The \textit{support} of a coherent \(\gr^F\D_Y\)-module \(\F_\bullet\) is a Zariski closed subset of the cotangent bundle \(T_Y^*\). Furthermore, a \textit{graded logarithmic Higgs sheaf} with poles along \(D\) is a graded \(\O_Y\)-module \(\F_\bullet=\oplus_k\F_k\) with a logarithmic Higgs structure
\[
\phi_\bullet:\F_{\bullet}\to \F_{\bullet+1}\tensor \Omega_Y(\log D)
\]
such that \(\phi\wedge \phi=0\).

Now, we explain Popa and Schnell's construction of graded logarithmic Higgs sheaves via Hodge modules. To begin with, let \((\mathcal V,F)\) be a polarizable variation of Hodge structure on an open subvariety of \(Y\) such that \(\mathcal V\) is a flat vector bundle and \(F\) is the Hodge filtration. Shrinking this open subvariety, we assume that \((\mathcal V,F)\) is defined on the complement of a divisor \(D'\supset D\) in \(Y\).

Notice that \((\mathcal V,F)\) on \(Y\setminus D'\) admits Deligne's canonical extension \((\mathcal V_0,F)\) over \(Y^o\subset Y\), the complement of a codimension \(2\) subvariety, whose eigenvalues of residues are contained in \([0,1)\). Indeed, \(Y^o\) can be the complement of the singular locus of \(D'\) in \(Y\) so that \(D'|_{Y^o}\) is a smooth divisor. While the associated graded module \(\gr^F_\bullet\mathcal V_0\) is a graded logarithmic Higgs sheaf on \(Y^o\) with poles along \(D'\), Popa and Schnell provide a method to build that on \(Y\) with poles along \(D\).

\begin{thm}[\cite{PS17}*{Section 2.8}]
\label{thm: construction of logarithmic Higgs sheaf}
Under the above notation, let \(M\) be a polarizable Hodge module on \(Y\) associated to \((\mathcal V,F)\) and \((N,F)\) be the underlying filtered left \(\D_Y\)-module. Assume \(D\subset D'\) where \(D\) is an snc divisor. Suppose there is a sub-graded Higgs sheaf \(\G_\bullet\subset \gr^F_\bullet N|_{Y^o}\) on \(Y^o\subset Y\), the complement of a codimension \(2\) subvariety. If \(\G_\bullet|_{Y^o\setminus D}\) is supported on the zero section of the cotangent bundle \(T^*_{Y^o\setminus D}\), then
\[
\F_\bullet:=(\G_\bullet\cap \gr^F_\bullet \mathcal V_0)^{**}
\]
is a graded logarithmic Higgs sheaf on \(Y\) with poles along \(D\).
\end{thm}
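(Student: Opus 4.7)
The plan is to construct $\F_\bullet$ together with its Higgs field first on $Y^o$, by comparing the natural Higgs field on $\gr^F_\bullet N|_{Y^o}$ with the logarithmic Higgs field on $\gr^F_\bullet \mathcal V_0$, and then extend reflexively across the codimension-two complement $Y\setminus Y^o$.

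Since $(N,F)$ is a filtered left $\D_Y$-module, the symbol map endows $\gr^F_\bullet N$ with the structure of a graded $\gr^F\D_Y = \mathrm{Sym}^\bullet T_Y$-module, equivalently a graded Higgs field
\[
\phi_N\colon \gr^F_\bullet N \to \gr^F_{\bullet+1}N\otimes \Omega^1_Y
\]
with no poles on $Y$; by hypothesis $\G_\bullet$ is preserved, and I write $\phi_\G$ for the restriction. On $Y^o$, Deligne's canonical extension $\mathcal V_0$ carries a logarithmic connection with nilpotent residues along the smooth divisor $D'|_{Y^o}$, and Griffiths transversality produces a graded logarithmic Higgs field
\[
\phi_0\colon \gr^F_\bullet \mathcal V_0 \to \gr^F_{\bullet+1}\mathcal V_0\otimes \Omega^1_{Y^o}(\log D'|_{Y^o}).
\]
By Saito's identification of the Hodge filtration on $N$ with the one induced from the canonical extension, $\phi_N$ and $\phi_0$ coincide after inverting $D'$, so inside the common meromorphic extension $\gr^F_\bullet N|_{Y^o}(\ast D')$ the intersection $\G_\bullet\cap \gr^F_\bullet \mathcal V_0$ inherits a single graded Higgs field, valued a priori in $\bullet\otimes \Omega^1_{Y^o}(\log D'|_{Y^o})$.

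The support hypothesis is used to suppress the pole along $D'\setminus D$. The assumption that $\G_\bullet|_{Y^o\setminus D}$ is supported on the zero section of $T^*_{Y^o\setminus D}$ means that, as a $\mathrm{Sym}^\bullet T_Y$-module, $\G_\bullet$ is annihilated by a power of the augmentation ideal on $Y^o\setminus D$; equivalently $\phi_\G$ is locally nilpotent, and a degree-counting argument using the bounded Hodge range of $N$ forces the relevant residue contributions of $\phi_\G$ to vanish on $Y^o\setminus D$. Hence, in a punctured neighbourhood of any point of $(D'\setminus D)\cap Y^o$, the induced Higgs field on $\G_\bullet\cap\gr^F_\bullet\mathcal V_0$ has vanishing residue along $D'$, so no logarithmic pole along $D'\setminus D$ survives. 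The Higgs field thus takes values in $\bullet\otimes\Omega^1_{Y^o}\bigl(\log(D\cap Y^o)\bigr)$, and $\phi\wedge\phi=0$ is inherited from $\phi_0$.

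Finally, because $Y\setminus Y^o$ has codimension at least two, the reflexive hull $\F_\bullet=(\G_\bullet\cap \gr^F_\bullet\mathcal V_0)^{**}$ is a reflexive $\O_Y$-module on $Y$, and morphisms between reflexive sheaves extend uniquely across codimension-two loci by Hartogs; the logarithmic Higgs structure therefore extends from $Y^o$ to all of $Y$ with poles along $D$. The main obstacle I anticipate is the compatibility $\phi_N\equiv\phi_0$ after inverting $D'$, which requires the precise interaction between Saito's Hodge filtration on the underlying D-module of $M$ and the Kashiwara--Malgrange V-filtration near the smooth divisor $D'|_{Y^o}$; once this identification is in hand, the support hypothesis plugs in cleanly to cancel the residue along $D'\setminus D$.
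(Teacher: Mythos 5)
The paper does not actually prove this statement; it is attributed to Popa--Schnell and the paper's remark after it records only that ``the main idea of the proof is to analyze the Kashiwara--Malgrange $V$-filtration along $D'$.'' Measured against that, your proposal has a genuine gap at the central step, and you also misidentify where the difficulty lies. The compatibility $\phi_N\equiv\phi_0$ after inverting $D'$, which you flag as the main obstacle, is in fact immediate: over $Y\setminus D'$ both are literally the Higgs field of $\gr^F_\bullet\mathcal V$. The hard part is the containment $\phi\bigl(\G_\bullet\cap\gr^F_\bullet\mathcal V_0\bigr)\subset\bigl(\G_{\bullet+1}\cap\gr^F_{\bullet+1}\mathcal V_0\bigr)\otimes\Omega^1_{Y^o}(\log D)$. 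Locally near a point of $(D'\setminus D)\cap Y^o$, with $D'=\{t=0\}$, for $x\in\G_p\cap\gr^F_p\mathcal V_0$ the two Higgs structures only give $\phi_{\partial_t}(x)\in\G_{p+1}$ and $t\cdot\phi_{\partial_t}(x)\in\gr^F_{p+1}\mathcal V_0$; you must upgrade the latter to $\phi_{\partial_t}(x)\in\gr^F_{p+1}\mathcal V_0$.

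Your argument for this step---``$\phi_\G$ is locally nilpotent, and a degree-counting argument using the bounded Hodge range of $N$ forces the relevant residue contributions of $\phi_\G$ to vanish''---does not establish it. The Hodge filtration of the $\D_Y$-module $N$ is exhaustive but not bounded above, so there is no a priori degree bound on $\gr^F_\bullet N$. More fundamentally, nilpotency of a residue is not evidence that it vanishes: the residue of $\phi_0$ along $D'$ is always nilpotent (it shifts the grading of $\gr^F_\bullet\mathcal V_0|_{D'}$ strictly upward, and that grading is bounded) and is typically nonzero, being governed by the local monodromy. Also note that $\phi_\G$ itself has no poles, so ``residue contributions of $\phi_\G$'' is not the right object; what has to vanish is the residue of $\phi_0$ restricted to the image of $\G_\bullet$. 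The actual proof in Popa--Schnell deduces this from the support hypothesis combined with Saito's relations between $F_\bullet N$ and the $V$-filtration along $D'$, together with the identification of $\mathcal V_0$ with a step of the $V$-filtration. You correctly name the $V$-filtration as the needed tool, but you assert rather than carry out this step, and the substitute you offer does not work. The reflexive extension across $Y\setminus Y^o$ and the deduction of $\phi\wedge\phi=0$ are fine.
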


Here, \((\cdot)^{**}\) denotes the reflexive hull of the sheaf. The main idea of the proof is to analyze the Kashiwara-Malgrange V-filtration along \(D'\). See \cite{PS17} for complete details. Furthermore, Popa and Schnell explain that when the lowest nonzero graded piece \(\G_p\) of \(\G_\bullet\) contains a sufficiently positive line bundle, we have a Viehweg-Zuo sheaf.

\begin{thm}
\label{thm: Viehweg-Zuo sheaf from Higgs sheaf}
Under the assumptions of Theorem \ref{thm: construction of logarithmic Higgs sheaf}, let \(\L\subset\G_p\) be a line bundle on \(Y^o\) contained in the lowest nonzero graded piece \(\G_p\) of \(\G_\bullet\). If \(\L(-D)\) is big, then there exists a big line bundle \(A\) with
\[
A\subset \Omega_Y(\log D)^{\tensor N}
\]
for some positive integer \(N\). In particular, \(\omega_Y(D)\) is big.
\end{thm}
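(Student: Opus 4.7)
My plan is to adapt the iterated Higgs field construction of Viehweg-Zuo \cite{VZ01, VZ02}, in the form refined by Popa-Schnell \cite{PS17}, to produce a big sub-line-bundle of a tensor power of \(\Omega_Y(\log D)\) from the positivity of \(\L(-D)\). By Theorem \ref{thm: construction of logarithmic Higgs sheaf}, the graded logarithmic Higgs sheaf \(\F_\bullet\) comes with an integrable Higgs field \(\phi_\bullet:\F_\bullet\to\F_{\bullet+1}\tensor_{\O_Y}\Omega_Y(\log D)\) satisfying \(\phi\wedge\phi=0\). Integrability makes the \(k\)-fold iterates
\[
\phi^{(k)}:\F_p\longrightarrow \F_{p+k}\tensor_{\O_Y}\mathrm{Sym}^k \Omega_Y(\log D)
\]
well-defined, and since \(\F_\bullet\) has only finitely many nonzero graded pieces, \(\phi^{(k)}|_{\L}=0\) for \(k\) large. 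I would then let \(N\ge 0\) be the largest integer with \(\phi^{(N)}|_{\L}\ne 0\).

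From the resulting nonzero map \(\phi^{(N)}|_{\L}:\L\to\F_{p+N}\tensor \mathrm{Sym}^N\Omega_Y(\log D)\), I would extract an inclusion \(A\hookrightarrow \Omega_Y(\log D)^{\tensor N}\) for a torsion-free rank one subsheaf \(A\), which on the smooth variety \(Y\) becomes a line bundle after taking reflexive hull. Concretely, I would compose \(\phi^{(N)}|_{\L}\) with a generically nonzero projection \(\F_{p+N}\twoheadrightarrow \mathcal{M}\) onto a rank one quotient and saturate the image inside \(\Omega_Y(\log D)^{\tensor N}\) (using that \(\mathrm{Sym}^N\) embeds in \((\cdot)^{\tensor N}\)); the sheaf \(A\) is then essentially a twist of \(\L\) by the inverse of \(\mathcal{M}\).

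The main obstacle, and the core of the argument, is to prove that \(A\) is big. This is where the assumption that \(\L(-D)\) is big enters, together with tight control on the denominator \(\mathcal{M}\) coming from the Hodge-theoretic origin of \(\F_\bullet\). Because \(\F_\bullet\) is built in Theorem \ref{thm: construction of logarithmic Higgs sheaf} by intersecting a sub-Higgs sheaf (supported on the zero section) with Deligne's canonical extension, the later graded pieces \(\F_{p+k}\) differ from \(\L\) by corrections whose positive parts are supported on \(D\), and each of the \(N\) log-pole contributions of \(\phi\) adds at most one further copy of \(D\). Carrying out the bookkeeping of these contributions as in Popa-Schnell \cite{PS17}*{Section 2.9}, I expect the bigness of \(\L(-D)\) to propagate (after possibly passing to a tensor power) to the bigness of \(A\); this is the technically most delicate step of the whole construction.

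Finally, the last assertion that \(\omega_Y(D)=\det\Omega_Y(\log D)\) is big follows immediately from the inclusion \(A\hookrightarrow \Omega_Y(\log D)^{\tensor N}\) with \(A\) big, by the big-sheaves theorem of Campana-P\u aun \cite{CP19}*{Theorem 7.11}.
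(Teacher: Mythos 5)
Your setup is right as far as it goes: you iterate the Higgs field, you use integrability to land in (symmetric, or tensor, powers of) \(\Omega_Y(\log D)\), you observe that the iterates must eventually vanish because \(\F_\bullet\) has only finitely many graded pieces, and you take \(N\) to be the last nonvanishing stage. That is exactly how the paper (following Popa--Schnell) sets up the chain
\[
0\to \F_p\xrightarrow{\phi}\F_{p+1}\tensor \Omega_Y(\log D)\xrightarrow{\phi\tensor \mathrm{id}}\F_{p+2}\tensor \Omega_Y(\log D)^{\tensor 2}\to\cdots,
\]
with the input \(\L(-D)\subset\F_p\). The citation of Campana--P\u{a}un for the final step is also correct.

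Where the proposal goes wrong is in the heart of the matter: how to get bigness of \(A\). You propose to track ``denominators'' coming from Deligne's canonical extension and the log poles of \(\phi\), arguing that the later graded pieces \(\F_{p+k}\) differ from \(\L\) only by corrections supported on \(D\). This is not the case --- the sheaves \(\F_{p+k}\) are Hodge-theoretic graded pieces of potentially large rank, not twists of \(\L\) by divisors supported on \(D\), and there is no elementary bookkeeping that forces a rank-one quotient \(\M\) of \(\F_{p+N}\) to be controlled by \(D\) alone. The bookkeeping you are describing is really the content of showing \(\L(-D)\subset\F_p\) in the first place (the Popa--Schnell comparison with the canonical extension along \(D'\)), not a mechanism for propagating bigness through the Higgs chain.

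The key ingredient you are missing is a deep \emph{negativity} statement: once the image of \(\L(-D)\) under the iterated Higgs field lands in \(\ker\phi_{p+N}\tensor\Omega_Y(\log D)^{\tensor N}\), one invokes the theorem that the dual \((\ker\phi_\bullet)^{*}\) of the kernel of the logarithmic Higgs field on \(\gr^F\mathcal V_0\) is weakly positive. This is due to Zuo, with the version needed here established by Brunebarbe and by Popa--Wu. Weak positivity of \((\ker\phi_{p+N})^{*}\) is precisely what prevents the bigness of \(\L(-D)\) from being ``absorbed'' into \(\ker\phi_{p+N}\); it forces the excess positivity into the \(\Omega_Y(\log D)^{\tensor N}\) factor, yielding a big sub-line-bundle \(A\subset\Omega_Y(\log D)^{\tensor N}\). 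Without this theorem, the inclusion \(\L(-D)\hookrightarrow\ker\phi_{p+N}\tensor\Omega_Y(\log D)^{\tensor N}\) gives no control on the positivity of the resulting rank-one subsheaf of \(\Omega_Y(\log D)^{\tensor N}\), and your argument cannot close. You should identify the stopping stage as landing in the kernel of the Higgs field, and bring in the weak positivity theorem at that point.
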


\begin{proof}[Sketch of the proof]
The proof of \cite{PS17}*{Proposition 2.15} shows that \(\L(-D)\subset \F_p\). We consider the following chain of coherent \(\O_Y\)-module homomorphisms:
\[
0\to \F_p\xrightarrow{\phi}\F_{p+1}\tensor \Omega_Y(\log D)\xrightarrow{\phi\tensor id}\F_{p+2}\tensor \Omega_Y(\log D)^{\tensor2} \to \cdots
\]
The dual of the kernel of \(\phi_\bullet\) is weakly positive by Zuo \cite{Zuo00},  Brunebarbe \cite{Brunebarbe18}, and Popa-Wu \cite{PW16}. Therefore, we eventually have
\[
\L(-D)\subset \ker \phi_{p+k}\tensor \Omega_Y(\log D)^{\tensor k}
\]
for some positive integer \(k\), from which we obtain a big line bundle
\[
A\subset \Omega_Y(\log D)^{\tensor N}.
\]
Thus, \(\w_Y(D)\) is big by \cite{CP19}*{Theorem 7.11}.
\end{proof}

\section{Proof of Theorem \ref{thm: Viehweg hyperbolicity for Gorenstein rational singularities}}
\label{sec: proof of the theorem}

Let \(Y\) be a good compactification of \(V\) with boundary a simple normal crossing divisor \(D\). We aim to prove that \(\omega_{Y}(D)\) is a big line bundle via the construction of a Viehweg-Zuo sheaf, as in the previous section. The following lemma is the analogue of Popa and Schnell's version of Viehweg's fiber product trick in \cite{PS17}*{Section 4.2}, for a flat family \(f:U\to V\) of projective varieties with Gorenstein rational singularities.

\begin{lem}
\label{lem: main construction}
For any line bundle \(A\) on \(Y\), there exists a projective variety \(X\) with \(\Q\)-Gorenstein canonical singularities and a morphism \(g:X\to Y\) with the following properties.
\begin{enumerate}
    \item \(g|_{g^{-1}(V)}:g^{-1}(V)\to V\) is a flat Whitney equisingular family of projective varieties with Gorenstein rational singularities.
    \item On the complement of a codimension 2 subvariety of \(Y\), there exists a positive integer \(q\) such that we have an inclusion
    \[
    A^{\tensor q}\hookrightarrow {\tilde g}_*\omega_{\widetilde X/Y}^{\tensor q},
    \]
    where \(\mu:\widetilde X\to X\) is a desingularization and \(\tilde g=g\circ \mu\).
\end{enumerate}
\end{lem}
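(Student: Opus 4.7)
The plan is to adapt the Viehweg fiber product trick as refined in \cite{PS17}*{Section 4.2}, with careful attention to how the singularity hypotheses propagate through the construction. Form the $s$-fold fiber product $f^{(s)}\colon U^{(s)} := U \times_V \cdots \times_V U \to V$. By iterating Proposition \ref{prop: Whitney equisingularity: fiber product}, $f^{(s)}$ is Whitney equisingular, and flatness is preserved under fiber products. Since each geometric fiber of $f^{(s)}$ is a product of Gorenstein rational varieties, which is again Gorenstein rational (a K\"unneth argument: products of resolutions compute both $R\mu_*\O$ of the product and its dualizing sheaf correctly), $f^{(s)}$ is a flat Whitney equisingular family of projective varieties with Gorenstein rational singularities. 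Because $V$ is smooth and the fibers are canonical Gorenstein, the total space $U^{(s)}$ acquires Gorenstein canonical singularities.

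\textbf{Compactification and weak positivity.} Choose a projective compactification $g\colon X \to Y$ whose restriction over $V$ equals $f^{(s)}$ and such that $X$ is $\Q$-Gorenstein canonical; this is achieved by taking any compactification and applying a log resolution that is an isomorphism over $V$, which preserves property (1). Fix a resolution $\mu\colon \widetilde X \to X$ and set $\tilde g := g\circ \mu$. Viehweg's weak positivity theorem for relative pluri-canonical pushforwards asserts that $\tilde g_* \w_{\widetilde X/Y}^{\tensor m}$ is weakly positive for every $m\ge 1$. Maximal variation of $f$, together with taking $s$ sufficiently large, upgrades weak positivity to bigness of $\tilde g_* \w_{\widetilde X/Y}^{\tensor m}$ for some $m$, via the standard Viehweg--Koll\'ar bigness criterion that compares symmetric powers with further fiber products.

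\textbf{Inclusion and exponent matching.} Once $\F := \tilde g_* \w_{\widetilde X/Y}^{\tensor m}$ is big, on the complement of a codimension $2$ subset of $Y$ there is an inclusion $A^{\tensor N} \hookrightarrow \mathrm{Sym}^N \F$ for some large $N$. The natural map $\mathrm{Sym}^N \F \hookrightarrow \tilde g^{(N)}_* \w_{\widetilde X^{(N)}/Y}^{\tensor m}$, arising from the symmetric action on the $N$-fold relative fiber product, allows us to absorb the symmetric power into a further fiber product: replacing $s$ by $sN$ and setting $q = mN$ yields, after renaming $X$ to be this enlarged fiber product, the desired injection $A^{\tensor q} \hookrightarrow \tilde g_* \w_{\widetilde X/Y}^{\tensor q}$ on the complement of a codimension $2$ subset. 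The main obstacle --- and the point where we genuinely leave Popa--Schnell's smooth setting --- is verifying that the Gorenstein rational and Whitney equisingular hypotheses are preserved under iterated fiber products; for the singularities this reduces to K\"unneth for dualizing sheaves and for resolutions of rational Gorenstein varieties, and for the stratification it is exactly the content of Proposition \ref{prop: Whitney equisingularity: fiber product}.
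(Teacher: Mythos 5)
Your overall strategy is the right one and matches the paper's: run Viehweg's fiber product trick, and check that the singularity hypotheses propagate. You correctly identify the two key preservation facts, namely Proposition \ref{prop: Whitney equisingularity: fiber product} for Whitney equisingularity and the K\"unneth-type argument for Gorenstein rational (hence Gorenstein canonical) singularities of the open part. The weak-positivity-plus-maximal-variation portion of your argument is sketchy in the exponent bookkeeping (the map $\mathrm{Sym}^N\F \to \tilde g^{(N)}_*\omega^{\otimes m}$ should really be a tensor-power-to-fiber-product comparison, not a symmetric power, and ``renaming $X$ to the enlarged fiber product'' glosses over the need to re-compactify), but this part can be cited essentially verbatim from \cite{PS17}*{Section 4.2}, as the paper does, so I will not dwell on it.

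The genuine gap is your construction of the $\Q$-Gorenstein canonical compactification. You propose to take an arbitrary compactification and ``apply a log resolution that is an isomorphism over $V$,'' but no such thing exists: $U^{(s)}$ itself is singular, and the closure $\overline{\mathrm{Sing}(U^{(s)})}$ in any compactification generally meets the boundary, so a resolution cannot simultaneously resolve the boundary singularities and be an isomorphism over $V$. Even if you restrict to blowups along centers contained in the boundary, there is no reason the resulting total space is $\Q$-Gorenstein, let alone canonical, and the $\Q$-Gorenstein canonical property is used crucially in the subsequent proof of Theorem \ref{thm: Viehweg hyperbolicity for Gorenstein rational singularities} (to write $\omega_{\widetilde X}^{\otimes q}\cong\mu^*\omega_X^{\otimes q}(\widetilde E)$ with $\widetilde E$ effective). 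The paper instead takes the \emph{relative canonical model} $X'$ of the compactified fiber product $X^r$ over $Y$, via \cite{BCHM}. This does three things at once: it produces a projective $X'$ with $\Q$-Gorenstein canonical singularities by construction; since $U^{(s)}$ already has canonical singularities, the canonical model is an isomorphism over $V$, so $g'^{-1}(V)=U^{(s)}$ and property (1) is preserved; and the resolution $\widetilde X^{(r)}$ used in \cite{PS17} automatically factors through $X'$, so the injection of property (2) descends correctly. Without this step your construction does not produce the required $\Q$-Gorenstein canonical $X$, and the rest of the proof of the main theorem cannot proceed.
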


\begin{proof}
To begin with, there exists a projective compactification \(X\) of \(U\) such that \(f:U\to V\) extends to a morphism \(g:X\to Y\), so that \(g|_{U}=f\). We modify \(X\) and \(g\) to satisfy the properties (1) and (2).

Let \(\mu:\widetilde X\to X\) be a desingularization and \(\tilde g:=g\circ \mu:\widetilde X\to Y\). Recall the results from \cite{PS17}*{Section 4.2}, which is essentially an application of semistable reduction and Viehweg's fiber product trick (e.g. Mori \cite{Mori87}*{Section 4}): there exists positive integers \(q\) and \(r\) such that for any resolution
\[
\tilde g^{(r)}:\widetilde X^{(r)}\to Y
\]
of \(r\)-fold fiber product \(\tilde g^r:\widetilde X^r\to Y\) of \(\tilde g\), we have an injection
\begin{equation}
\label{eqn: inclusion to pushforward pluricanonical}
A^{\tensor q}\hookrightarrow {\tilde g^{(r)}}_*\omega_{\widetilde X^{(r)}/Y}^{\tensor q}.
\end{equation}
on the complement of a codimension 2 subvariety of \(Y\). Here, \(r\)-fold fiber product \(X^r\) refers to the main irreducible component of
\[
X\times_YX\times_Y \dots \times_YX
\]
iterated \(r\)-times. We denote by \(\mu^r:\widetilde X^{(r)}\to X^r\), a resolution of singularities, which factors through \(\widetilde X^r\).

Let \(X'\) be the relative canonical model of \(X^r\), i.e. the relative Proj of the sheaf of relative canonical rings with respect to a desingularization (see Birkar-Cascini-Hacon-McKernan \cite{BCHM}). Let \(g':X'\to Y\) be the induced morphism. Recall that a product of varieties with Gorenstein rational singularities has Gorenstein rational singularities. Consequently, \(U^r\) already has Gorenstein canonical singularities, so that
\[
g'^{-1}(V)=U^r(:=U\times_VU\times_V\dots\times_VU)\subset X^r.
\]
Thus,
\[
g'|_{g'^{-1}(V)}:g'^{-1}(V)\to V
\]
is a flat Whitney equisingular family of projective varieties with Gorenstein rational singularities, by Proposition \ref{prop: Whitney equisingularity: fiber product}.

Therefore, we replace \(X\) and \(g\) with \(X'\) and \(g'\); the property (1) is immediate. Additionally, from the construction, \(X'\) has \(\Q\)-Gorenstein canonical singularities and \(\mu^r\) factors through \(X'\). Hence, \eqref{eqn: inclusion to pushforward pluricanonical} implies the property (2).
\end{proof}

By adjunction, the condition (2) is equivalent to saying that
\[
(\omega_{\widetilde X/Y}\tensor \tilde g^*A^{-1})^{\tensor q}
\]
has a nonzero section over the complement of a codimension 2 subvariety of \(Y\). This continues to hold for any multiple of \(q\).

\begin{proof}[Proof of Theorem \ref{thm: Viehweg hyperbolicity for Gorenstein rational singularities}]
We choose a line bundle \(A\) on \(Y\) such that \(A(-D)\) is big. Let \(g:X\to Y\) be a morphism obtained from Lemma \ref{lem: main construction}, satisfying the properties (1) and (2). We redefine \(U:=g^{-1}(V)\) and \(\widetilde U:=\tilde g^{-1}(V)\). By taking a multiple of \(q\), we assume \(\omega_{X}^{\tensor q}\) is a line bundle and
\[
(\omega_{\widetilde X/Y}\tensor \tilde g^*A^{-1})^{\tensor q}
\]
has a nonzero section over \(Y^o\), the complement of a codimension \(2\) subvariety in \(Y\). Since \(X\) has canonical singularities, we have
\[
\omega_{\widetilde X}^{\tensor q}\isom\mu^*\omega_{X}^{\tensor q}(\widetilde E)
\]
where \(\widetilde E\) is an effective exceptional divisor of \(\mu\). Hence, we have a nonzero section of
\[
(\omega_{X/Y}\tensor g^*A^{-1})^{\tensor q}\isom\mu_*\left((\omega_{\widetilde X/Y}\tensor \tilde g^*A^{-1})^{\tensor q}\right)
\]
over \(Y^o\).
Write \(\widetilde E=E+(\widetilde E-E)\) where \(E\) is the sum of divisors mapping nontrivially to \(g^{-1}(V)\). Since \(U=g^{-1}(V)\) has Gorenstein rational singularities, all the coefficients of \(E\) are some multiple of \(q\), and thus \(E=qE_0\), where \(\omega_{\widetilde U}(-E_0)\isom\mu^*\omega_{U}\). As a consequence, we have
\[
(\omega_{X/Y}\tensor g^*A^{-1})^{\tensor q}\isom\mu_*\left((\omega_{\widetilde X/Y}(-E_0)\tensor \tilde g^*A^{-1})^{\tensor q}\right),
\]
and thus,
\(
(\omega_{\widetilde X/Y}(-E_0)\tensor \tilde g^*A^{-1})^{\tensor q}
\)
has a nonzero section \(s\) over \(Y^o\).

Note that a logarithmic Higgs sheaf constructed on \(Y^o\) uniquely extends to that on \(Y\) via taking the reflexive hull as in Theorem \ref{thm: construction of logarithmic Higgs sheaf}. Therefore, without loss of generality, we replace \(X,Y,U,V\) with the respective preimages of \(Y^o\). Denote \(\L:=\omega_{\widetilde X/Y}(-E_0)\tensor \tilde g^*A^{-1}\) and let \(\tilde\phi:Z\to \widetilde X\) be a desingularization of \(q\)-cyclic cover associated to the section \(s:\O_{\widetilde X}\to \L^{\tensor q}\).
\begin{displaymath}
\xymatrix{
{Z}\ar[r]^-{\tilde\phi}\ar[rrd]_{h}\ar@(ur,ul)[rr]^{\phi}& {\widetilde X}\ar[r]^-{\mu} \ar[rd]^{\tilde g} & {X} \ar[d]^{g} \\
& & {Y}
}
\end{displaymath}
By Corollary \ref{cor: cyclic cover and Hodge module}, we have
\[
\L^{-1}\tensor \gr^F\DR_{\widetilde X/Y}(\Q_{\widetilde X}^{H}[n])\to \gr^F\DR_{\widetilde X/Y}(\tilde\phi_+\Q_{Z}^{H}[n])
\]
in \(D^bG(\tilde g^*\gr^F\D_{Y})\). Taking the derived pushforward \(\tilde g_*\), we obtain a morphism
\begin{equation*}
\tilde g_*\left(\L^{-1}\tensor \gr^F\DR_{\widetilde X/Y}(\Q_{\widetilde X}^{H}[n])\right) \to \gr^F\DR_{Y/Y}(h_+\Q_{Z}^{H}[n])\isom \gr^F(h_+\Q_{Z}^{H}[n])
\end{equation*}
in \(D^bG(\gr^F\D_{Y})\), by Proposition \ref{prop: graded pushforward commuting diagram}. Due to Saito's Decomposition Theorem \ref{thm: Saito's decomposition theorem}, \(h_+\Q_{Z}^{H}[n]\) decomposes into a direct sum of Hodge modules. Let the polarizable Hodge module \(M\) be the direct summand of \(\mathscr H^0(h_+\Q_{Z}^{H}[n])\) associated to the variation of Hodge structure induced by the middle cohomology of \(h\). Then, we have the following morphism of graded \(\gr^F\D_Y\)-modules:
\begin{equation}
\label{eqn: Popa Schnell G}
\mathscr H^0\left(\tilde g_*\left(\L^{-1}\tensor \gr^F\DR_{\widetilde X/Y}(\Q_{\widetilde X}^{H}[n])\right)\right) \to \gr^FM.
\end{equation}
Here, \(M\) is considered as the filtered right \(\D_{Y}\)-module associated to the Hodge module. Let \(\G_\bullet\) be its image.

As in \cite{PS17}*{Proposition 2.9}, the lowest nonzero graded piece \(\G_p\) of \(\G_\bullet\) is the image of the morphism
\[
\tilde g_*(\L^{-1}\tensor \omega_{\widetilde X})\to h_*\omega_{Z}.
\]
which is injective, due to the injectivity of \(\tilde\phi^*(\L^{-1}\tensor \omega_{\widetilde X})\to \omega_{Z}\) and the adjunction. Therefore, \(\G_p=\tilde g_*(\L^{-1}\tensor \omega_{\widetilde X})\isom\omega_{Y}\tensor A\).

On a different note, observe that
\[
\L|_{\widetilde U}=\mu^*(\omega_{U/V}\tensor  g^* A^{-1}).
\]
Let \(\L_U:=\omega_{U/V}\tensor  g^* A^{-1}\) so that \(\L|_{\widetilde U}=\mu^*\L_U\). From Corollary \ref{cor: cyclic cover and intersection complex} and its proof, there exists a morphism of graded \(\gr^F\D_V\)-modules,
\begin{equation}
\label{eqn: Popa Schnell IC}
\mathscr H^0\left(g_*\left(\L_U^{-1}\tensor \gr^F\DR_{U/V}(\IC^H_U)\right)\right)\to \gr^FM|_V,
\end{equation}
which is obtained by the same strategy used to obtain \eqref{eqn: Popa Schnell G}. Furthermore, it is immediate that \eqref{eqn: Popa Schnell IC} factors through \eqref{eqn: Popa Schnell G} restricted to \(V\).

From the Whitney equisingularity of \(g:U\to V\) and Proposition \ref{prop: support of graded complex of IC}, the left hand side of \eqref{eqn: Popa Schnell IC} is supported on the zero section of the cotangent bundle \(T_{V}^*\). Consequently, its image \(\H_\bullet\) in \(\gr^F_\bullet M|_{V}\) is supported on the zero section of \(T_{V}^*\), and we have the following inclusions
\[
\H_\bullet\subset \G_\bullet|_{V}\subset \gr^F_\bullet M|_{V}.
\]
By Proposition \ref{prop: lowest Hodge filtration of intersection complex} and Remark \ref{rmk: lowest filtration of intersection complex}, we have
\[
\gr^F_p\DR_{U/V}(\IC_{U}^{H}))\isom \w_{U}
\]
so that the lowest nonzero graded piece \(\H_p\) of \(\H_\bullet\) satisfies
\[
\H_p=\G_p|_{V}\isom\w_{V}\tensor A\subset \gr^F_pM|_{V}.
\]

Let \(j:V\to Y\) be the open embedding. Define a graded (right) \(\gr^F\D_{Y}\)-module
\[
\mathcal E_\bullet:=j_*\H_\bullet \cap \G_\bullet\subset j_*j^*\G_\bullet.
\]
Then, the lowest nonzero graded piece satisfies \(\mathcal E_p\isom\w_{Y}\tensor A\), and additionally \(\mathcal E_\bullet\subset \gr^F_\bullet M\).
Via the left-right correspondence of D-modules, we take a tensor product \(\cdot\tensor \omega_{Y}^{-1}\):
\[
\mathcal E^l_\bullet:=\mathcal E_\bullet\tensor \w_Y^{-1}\subset \gr^F_\bullet N
\]
where \(N\) is the underlying filtered left module of \(M\). Still, \(\mathcal E^l_\bullet|_V\) is supported on the zero section of \(T^*_V\), and thus by Theorem \ref{thm: construction of logarithmic Higgs sheaf}, we obtain a graded logarithmic Higgs sheaf \(\F_\bullet\) on \(Y\) with poles along \(D\).

Furthermore, the lowest nonzero graded piece of \(\mathcal E_\bullet^l\) is \(A\). Recall that we have started with the line bundle \(A\) such that \(A(-D)\) is big. Therefore, Theorem \ref{thm: Viehweg-Zuo sheaf from Higgs sheaf} implies that \(\w_Y(D)\) is big.
\end{proof}

\begin{rmk}
In fact, the same proof works if \(f:U\to V\) satisfies the assumptions in Theorem \ref{thm: Viehweg hyperbolicity for Gorenstein rational singularities} away from a closed subset of codimension at least \(2\) in \(V\), but we do not have a gain in the conclusion. Notice that \(V\) is of log general type if and only if \(V^o\) is of log general type, where \(V^o\subset V\) is the complement of a codimension at least \(2\) subvariety.
\end{rmk}

\section{Application to families with isolated hypersurface singularities}
\label{sec: isolated hypersurface singularity}

This section is devoted to applications of Corollary \ref{cor: Viehweg hyperbolicity for isolated hypersurface singularities}. We first establish the numerical criterion of Whitney equisingularity in terms of the Milnor sequence; then, we use it to analyze certain strata of the moduli space of varieties of general type, and families of surfaces and threefolds with non-negative Kodaira dimension.

\subsection{Numerical criterion for Whitney equisingularity}
\label{subsec: numerical criteria}

We recall the definition of the Milnor sequence given in the introduction: for a germ \((X_0,x_0)\subset (\C^{n+1},0)\) of an isolated hypersurface singularity, the Milnor sequence
\[
\mu^{(*)}_{x_0}(X_0)=\left(\mu^{(n+1)}_{x_0}(X_0),\dots, \mu^{(i)}_{x_0}(X_0),\dots, \mu^{(0)}_{x_0}(X_0)\right)
\]
consists of the Milnor numbers \(\mu^{(i)}_{x_0}(X_0):=\mu_{x_0}(X_0\cap H)\), where \(H\) is a general plane of dimension \(i\) through \(x_0\). For a deformation of isolated hypersurface singularities, the numerical criterion of Whitney equisingularity was established by Teissier, Brian\c con, and Speder, via the constancy of the Milnor sequences \(\mu^{(*)}\).

\begin{thm} [\cites{Teissier73, BS76}]
\label{thm: deformation milnor sequence Whitney}
Let \(f:\mathscr X\to \mathscr Y\) be an analytic deformation of an isolated hypersurface singularity \((\mathscr X_0,x_0)\) with a section \(\sigma:\mathscr Y\to\mathscr X\) such that \(f|_{\mathscr X\setminus \sigma(\mathscr Y)}\) is smooth. Then \(\mathscr X=\mathscr X\setminus \sigma(\mathscr Y)\cup \sigma(\mathscr Y)\) is a Whitney stratification if and only if \(\mu_{\sigma(y)}^{(*)}(\mathscr X_y)\) is constant for all \(y\in \mathscr Y\)
\end{thm}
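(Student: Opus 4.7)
This statement is the classical numerical characterization of Whitney equisingularity for deformations of isolated hypersurface singularities, with the forward direction due to Teissier~\cite{Teissier73} and the converse due to Brian\c con-Speder~\cite{BS76}. The plan is to invoke these results directly; still, it is worth outlining how both implications proceed, since the paper uses this criterion to extract Corollary~\ref{cor: Viehweg hyperbolicity for isolated hypersurface singularities} from Theorem~\ref{thm: Viehweg hyperbolicity for Gorenstein rational singularities}.

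For the direction \emph{constant $\mu^{(*)}\Rightarrow$ Whitney}, my plan would follow Teissier's original strategy using polar varieties. To each analytic deformation $f:\mathscr X\to\mathscr Y$ of a hypersurface singularity one associates, for every $i\leq n+1$, the relative polar variety $\Gamma^{(i)}(f,\sigma)$ obtained as the closure of the critical locus of a general linear projection $\mathscr X\setminus\sigma(\mathscr Y)\to\mathscr Y\times\C^i$. Teissier's key identity expresses the multiplicity of $\Gamma^{(i)}(f,\sigma)$ along $\sigma(\mathscr Y)$ as $\mu^{(i)}_{\sigma(y)}(\mathscr X_y)+\mu^{(i-1)}_{\sigma(y)}(\mathscr X_y)$, so constancy of the full Milnor sequence implies that no polar variety acquires a new component along $\sigma(\mathscr Y)$. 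Via the principle of specialization of integral dependence for the Jacobian ideal, this is exactly what forces any limit of tangent spaces $T_{p_i}\mathscr X_{f(p_i)}$ at points $p_i\to\sigma(y)$ to contain $T_{\sigma(y)}\sigma(\mathscr Y)$, which is Whitney's condition (b); condition (a) then follows by a standard argument in this setting.

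For the direction \emph{Whitney $\Rightarrow$ constant $\mu^{(*)}$}, due to Brian\c con-Speder, the idea is to run the above relation in reverse. Assuming Whitney's condition (b) along $\sigma(\mathscr Y)$, one bounds from above the multiplicity of each polar variety $\Gamma^{(i)}(f,\sigma)$ along the section. Combined with the upper semicontinuity of each $\mu^{(i)}_{\sigma(y)}(\mathscr X_y)$ in the parameter $y$ (which comes from the standard theory of the Milnor fibration of the restrictions $\mathscr X_y\cap H$ to general linear subspaces $H$), this bound can only be realized when each $\mu^{(i)}$ is in fact constant on $\mathscr Y$. The main technical obstacle is the theory of specialization of integral dependence for ideals and its interplay with polar varieties, which is precisely the content of the papers cited above; for our purposes, it suffices to quote their theorems.
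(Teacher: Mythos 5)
The paper's treatment of this theorem is a pure citation — it simply refers the reader to Brian\c con--Speder~\cite{BS76}*{Corollary 3} for the full ``if and only if'' — and your proposal takes the same approach, deferring to the Teissier and Brian\c con--Speder references for the two implications. The expository sketch you add (polar varieties $\Gamma^{(i)}$, Teissier's multiplicity identity $m(\Gamma^{(i)})=\mu^{(i)}+\mu^{(i-1)}$, specialization of integral dependence, and the correct attribution of ``constant $\mu^{(*)}\Rightarrow$ Whitney'' to Teissier and the converse to Brian\c con--Speder) is consistent with how the paper frames the result in its introduction, so the proposal is in line with the paper's use of this statement.
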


See Brian\c con-Speder \cite{BS76}*{Corollary 3} for the proof. Using this theorem, we give a criterion for flat families of varieties with isolated hypersurface singularities to be Whitney equisingular.

\begin{thm}
\label{thm: Whitney equisingular, constant Milnor sequence}
Let \(Y\) be a smooth variety and \(f:X\to Y\) be a flat family of projective varieties with isolated hypersurface singularities. Then, \(f\) is Whitney equisingular if and only if each nonzero Milnor sequence \(\mu^{(*)}\) appears a fixed number of times on every fiber.
\end{thm}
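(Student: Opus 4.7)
The plan is to reduce both directions to the germ-level criterion of Teissier--Brian\c con--Speder (Theorem~\ref{thm: deformation milnor sequence Whitney}), using the locality of Whitney equisingularity from Corollary~\ref{cor: coarsest Whitney equisingular}, upper semicontinuity of Milnor numbers, and Thom's first isotopy lemma mentioned in the introduction. Throughout, the key object is the relative singular locus $\Sigma:=\mathrm{Sing}(X/Y)\subset X$, which is closed with $0$-dimensional fibers over $Y$, and the arguments are local in the analytic topology around each $x\in \Sigma$. For the forward direction, assume $f$ is Whitney equisingular and let $X=\coprod_{\alpha} X_\alpha$ be its coarsest Whitney stratification (Theorem~\ref{thm: coarsest Whitney stratification}). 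Each stratum $X_\alpha\subset \Sigma$ is smooth and $f|_{X_\alpha}$ is smooth with $0$-dimensional fibers, hence $X_\alpha\to Y$ is \'etale. Analytically near any point of $X_\alpha$ this provides a holomorphic section $\sigma$ of $f$, and Theorem~\ref{thm: deformation milnor sequence Whitney} gives that $\mu^{(*)}_{\sigma(y)}(X_y)$ is constant, so a single Milnor sequence $\mu^{(*)}_\alpha$ is attached to each component of each stratum of $\Sigma$. Since $f$ is proper and Whitney equisingular, Thom's first isotopy lemma makes $f$ a topological locally trivial fibration of stratified spaces; every non-empty stratum contained in $\Sigma$ is then a topological covering of $Y$ of constant degree, and summing degrees over components of $\Sigma$ with Milnor sequence $\mu^{(*)}$ gives a count on $X_y$ independent of $y$.

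For the converse, assume the multiset of Milnor sequences on each fiber is independent of $y$ and fix $x\in \Sigma\cap X_y$ with Milnor sequence $\mu^{(*)}_x$. Upper semicontinuity of each $\mu^{(i)}$ on $\Sigma$ forces any nearby singular point $x'\in \Sigma\cap X_{y'}$ to satisfy $\mu^{(*)}_{x'}\le \mu^{(*)}_x$ coordinatewise. The constancy of the multiset, applied simultaneously around every singular point of $X_y$, rules out both strict inequality at any $x'$ and the splitting of $x$ into two or more nearby singular points of $X_{y'}$. Consequently $\Sigma_{\mathrm{red}}$ is locally an analytic section $\sigma\colon (Y,y)\to (X,x)$ of $f$ along which $\mu^{(*)}$ is constant. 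Theorem~\ref{thm: deformation milnor sequence Whitney} then yields a local Whitney stratification $(X\setminus \sigma(Y))\cup \sigma(Y)$, and these local stratifications glue, via Corollary~\ref{cor: coarsest Whitney equisingular}, to a global Whitney stratification of $X$ with $f$ smooth along every stratum, which is precisely Whitney equisingularity of $f$.

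The main obstacle is the converse direction: extracting from the numerical hypothesis of constant multiset of Milnor sequences the analytic fact that, around each $x\in \Sigma$, $\Sigma_{\mathrm{red}}$ is a single holomorphic section of $f$ carrying a constant Milnor sequence. This step is exactly where upper semicontinuity and conservation of the total Milnor content on neighboring fibers must be combined, and it is the only place where the global hypothesis is used; once it is established, Theorem~\ref{thm: deformation milnor sequence Whitney} and the local nature of Whitney equisingularity handle the rest.
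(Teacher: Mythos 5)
Your proof is correct and follows the same reduction to the Teissier--Brian\c con--Speder germ criterion (Theorem~\ref{thm: deformation milnor sequence Whitney}) as the paper, but in the converse direction you take a genuinely more elementary route. Where you re-derive the non-splitting of the critical locus by a counting argument, the paper simply cites Teissier's non-splitting principle (\cite{Teissier75}*{Theorem 5 p.615}): constancy of the total Milnor number $\sum_{x_k\in X_y}\mu_{x_k}(X_y)$ alone forces $C_{f,\mathrm{red}}$ to be \'etale over $Y$. Your version uses the stronger hypothesis that the multiset is constant, hence that the number of singular points is constant, which lets you argue directly: upper semicontinuity of the local Milnor content forces the local sum near each singular point $x$ of the special fiber to be constant and positive, hence each such $x$ is approached by at least one singular point in nearby fibers; constancy of the total count then forces exactly one, ruling out splitting. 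This is a legitimate substitution of the theorem by a cheaper argument, at the price of using the stronger hypothesis. One step you elide and should make explicit: having established that $C_{f,\mathrm{red}}\to Y$ is finite and set-theoretically bijective, you need the (easy, but non-trivial) fact that a reduced finite subvariety bijective over a smooth base is isomorphic to it --- pass to the normalization $\widetilde{C}\to C_{f,\mathrm{red}}\to Y$, note $\widetilde{C}\to Y$ is finite birational to a normal target hence an isomorphism, and this produces a holomorphic section whose image is $C_{f,\mathrm{red}}$ (forcing $C_{f,\mathrm{red}}$ smooth since sections of separated morphisms are closed immersions). In the forward direction your appeal to Thom's first isotopy lemma is more than is needed: once $C_{f,\mathrm{red}}$ is \'etale (and proper) over $Y$, it is a finite covering with locally constant degree, which together with germ-wise constancy of $\mu^{(*)}$ along each local section gives the constant multiset directly, as the paper does.
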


In particular, this implies Corollary \ref{cor: Viehweg hyperbolicity for isolated hypersurface singularities}.

\begin{proof}
Let \(C_{f,\mathrm{red}}\) be the reduced critical locus of \(f\). Then, it is easy to see that \(f\) is Whitney equisingular if and only if
\[
X=X\setminus C_{f,\mathrm{red}}\coprod C_{f,\mathrm{red}},
\]
is a Whitney stratification and \(C_{f,\mathrm{red}}\) is \'etale over \(Y\). 

Suppose first that \(f\) is Whitney equisingular. Then, by Theorem \ref{thm: deformation milnor sequence Whitney}, the number of appearances of a particular Milnor sequence in the fiber \(X_y\) is locally analytically constant for \(y\in Y\). As a consequence, we have the conclusion.

Now, it suffices to prove the converse, assuming that each nonzero Milnor sequence appears a fixed number of times on every fiber. In particular, the sum of Milnor numbers
\begin{equation}
\label{eqn: sum of milnor numbers}
\sum_{x_k\in X_y} \mu_{x_k}(X_y)   
\end{equation}
is constant for all \(y\in Y\), where the sum is taken over the singular points \(x_k\) of the fiber \(X_y=\pi^{-1}(y)\). By Teissier's non-splitting principle \cite{Teissier75}*{Theorem 5 p.615}, \(C_{f,\mathrm{red}}\) is \'etale over \(Y\). Moreover, due to the analytic upper semicontinuity of the Milnor sequences \cite{Teissier75}*{2.7} (or \cite{Teissier77}*{5.12.3}) explained below, the Milnor sequence is locally analytically constant along \(C_{f,\mathrm{red}}\). Thus, we can apply Theorem \ref{thm: deformation milnor sequence Whitney} to conclude that \(f\) is Whitney equisingular.
\end{proof}

Teissier's non-splitting principle states that given a germ of an analytic deformation \(f:\mathscr X\to \mathscr Y\) of an isolated hypersurface singularity \((\mathscr X_0,x_0)\), the sum of Milnor numbers in \eqref{eqn: sum of milnor numbers} is analytically upper semicontinuous, and if the sum is constant, then \(C_{f,\mathrm{red}}\) is analytically isomorphic to \(\mathscr Y\). Hence, the \(\mu\)-constant stratum in \(\mathscr Y\) is an analytic locally closed subset of \(\mathscr Y\). Furthermore, the results in \cite{Teissier75}*{Section 2} (c.f. \cite{Teissier77}*{5.12.3}) prove that \(\mu^{(i)}\) is analytically upper semicontinuous for all \(i\) and the \(\mu^{(*)}\)-constant stratum in \(\mathscr Y\) is also well-defined as an analytic locally closed subset of \(\mathscr Y\). This is referred to as the analytic upper semicontinuity of the Milnor sequences.

As a consequence, given a flat family \(f:X\to Y\) of projective varieties with isolated hypersurface singularities and \(Y\) a possibly singular variety, \(Y\) admits a complex analytic stratification where each stratum
\[
Y^{\{\mu^{(*)}_s\}}:=\left\{y\in Y | \{\mu_{x_k}^{(*)}(X_y)\}=\{\mu^{(*)}_s\} \textrm{ as a multiset}\right\}
\]
consists of the points whose fibers have the same multiset \(\{\mu^{(*)}_s\}_{s\in S}\) of nonzero Milnor sequences appearing on the isolated hypersurface singularities. Here, a multiset is a set of elements counted with multiplicities. For instance, the number of singularities is fixed along the fibers over points in \(Y^{\{\mu^{(*)}_s\}}\). We prove that the above stratification is algebraic, i.e. each \(Y^{\{\mu^{(*)}_s\}}\) is a Zariski locally closed subset of \(Y\).

\begin{lem}
Under the above notation, \(Y^{\{\mu^{(*)}_s\}}\) is a Zariski locally closed subset in \(Y\), if non-empty.
\end{lem}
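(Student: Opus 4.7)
The plan is to upgrade the analytic upper semicontinuity of the Milnor sequences (stated in the preceding remark, due to Teissier) to Zariski local closedness. The analytic statement already tells us that $Y^{\{\mu^{(*)}_s\}}$ is analytically locally closed in $Y$, so it only remains to establish Zariski constructibility and then combine the two.

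For Zariski constructibility, I would work with the reduced critical locus $C := C_{f,\mathrm{red}} \subset X$, a closed algebraic subvariety which is quasi-finite over $Y$ since $f$ has isolated singularities in fibers. On $C$, define $\tilde\mu^{(i)}(x) := \mu^{(i)}_x(X_{f(x)})$. The top component $\tilde\mu^{(n+1)}$ is Zariski upper semicontinuous by the standard flat-family argument (length of the Jacobian algebra). For $0<i<n+1$, Teissier's identification of $\mu^{(i)}$ with polar multiplicities gives Zariski upper semicontinuity of $\tilde\mu^{(i)}$ on $C$ as well. Hence each level set $C^{\mu^{(*)}} := \{\tilde\mu^{(*)} = \mu^{(*)}\}$ is Zariski locally closed in $C$, and by Chevalley's theorem applied to the quasi-finite morphism $\pi: C \to Y$, each counting function $y \mapsto \#(\pi^{-1}(y) \cap C^{\mu^{(*)}})$ has Zariski constructible level sets. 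Since $Y^{\{\mu^{(*)}_s\}}$ is a finite intersection of such level sets (one for each Milnor sequence, prescribing the required multiplicity), it is Zariski constructible.

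To combine the two properties, I would invoke the following general principle: a Zariski constructible subset of a complex algebraic variety that is analytically locally closed is automatically Zariski locally closed. This holds because for a constructible set the Zariski closure coincides with the analytic closure, so being analytically open in the analytic closure forces being Zariski open in the Zariski closure. Applied to $Y^{\{\mu^{(*)}_s\}}$, this yields the conclusion.

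The main obstacle is the Zariski upper semicontinuity of $\tilde\mu^{(i)}$ for intermediate $i$: the definition via generic plane sections is analytic in flavor, and converting it into an algebraic statement requires Teissier's polar multiplicity interpretation. A more self-contained alternative would bypass this step by directly applying the constructibility / analytic-local-closedness principle to the full analytic Milnor-sequence stratification on $C$, exploiting the fact that only finitely many Milnor sequences appear on fibers (via the upper bound coming from $\tilde\mu^{(n+1)}$).
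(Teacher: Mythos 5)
Your approach is valid but genuinely different from the paper's. The paper runs a Noetherian induction: it reduces to showing the \emph{largest} stratum is Zariski open, identifies that stratum with the locus where \(f\) is locally (analytically) Whitney equisingular via Theorem \ref{thm: Whitney equisingular, constant Milnor sequence}, and then invokes Corollary \ref{cor: coarsest Whitney equisingular} together with Teissier's theorem that the coarsest Whitney stratification of an algebraic variety is algebraic (Theorem \ref{thm: coarsest Whitney stratification}) to promote the analytic openness to Zariski openness. This cleanly sidesteps any direct discussion of semicontinuity of the individual Milnor numbers. You instead argue constructibility directly on the critical locus \(C_{f,\mathrm{red}}\) via Zariski upper semicontinuity of the \(\mu^{(i)}\) and a Chevalley-type counting argument, then combine with the analytic local closedness using the (correct) principle that a Zariski-constructible, analytically locally closed subset of a variety is Zariski locally closed.

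Both routes work, but yours carries more load in the step you yourself flag: Zariski (not merely analytic) upper semicontinuity of the intermediate \(\mu^{(i)}\) on \(C_{f,\mathrm{red}}\). This is true — it follows from Teissier's identification of \(\mu^{(i)}\) with generic polar multiplicities and the algebraic nature of polar varieties (\cite{Teissier82}) — but it is exactly the kind of technical input that the paper's appeal to the algebraicity of the coarsest Whitney stratification avoids, since that theorem already packages the relevant algebraicity. Your ``more self-contained alternative'' at the end does not actually close this gap: knowing that finitely many Milnor sequences occur and that the level sets are analytically locally closed does not, by itself, yield Zariski constructibility of those level sets, which is precisely what needs proving. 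Also, be careful to make the final combination step rigorous: the right way to see that constructible plus analytically locally closed implies Zariski locally closed is to observe that the complement \(\overline{S}^{\mathrm{Zar}}\setminus S\) is both constructible and analytically closed, hence equal to its own Zariski closure. Finally, note the paper also passes to a desingularization of \(Y\) to apply Theorem \ref{thm: Whitney equisingular, constant Milnor sequence}, whereas your constructibility argument works over singular \(Y\) directly; that is a modest advantage of your route.
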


\begin{proof}
By the Noetherian induction, it suffices to prove that the largest stratum, say \(Y^{\{\mu^{(*)}_s\}}\), is a Zariski open subset of \(Y\). By the base change to a desingularization, we may assume \(Y\) smooth. Then, by Theorem \ref{thm: Whitney equisingular, constant Milnor sequence} and its proof, \(Y^{\{\mu^{(*)}_s\}}\) is the set of points \(y\in Y\) such that \(f:X\to Y\) is Whitney equisingular over an analytic neighborhood of \(y\in Y\). By Corollary \ref{cor: coarsest Whitney equisingular}, if \(f:X\to Y\) is Whitney equisingular over an analytic neighborhood of \(y\in Y\), then over a Zariski neighborhood of \(y\in Y\). Hence, \(Y^{\{\mu^{(*)}_s\}}\) is a Zariski open subset of \(Y\).
\end{proof}

As an immediate consequence, given a multiset \(\{\mu^{(*)}_s\}_{s\in S}\) of nonzero Milnor sequences, we have the moduli stack \(\M_{n,v}^{\{\mu^{(*)}_s\}}\) of KSB-stable varieties of dimension \(n\) and volume \(v\), with isolated rational hypersurface singularities whose multiset is \(\{\mu^{(*)}_s\}_{s\in S}\). If non-empty, this is a locally closed substack of \(\M_{n,v}\), the moduli stack of KSB-stable varieties of dimension \(n\) and volume \(v\):
\[
\M_{n,v}^{\{\mu^{(*)}_s\}}\hookrightarrow \M_{n,v}.
\]
Therefore, \(\M_{n,v}^{\{\mu^{(*)}_s\}}\) is hyperbolic in a birational geometric sense: for every generically finite map \(V\to \M_{n,v}^{\{\mu^{(*)}_s\}}\), \(V\) is of log general type.

\subsection{Families of surfaces and threefolds}
\label{subsec: surfaces and threefolds}

In what follows, we apply Corollary \ref{cor: Viehweg hyperbolicity for isolated hypersurface singularities} for families of surfaces and threefolds. As before, \(V\) is a smooth quasi-projective variety.

\begin{ex}[Families of surfaces with Du Val singularities]
\label{ex: surface DV}
Let \(f:U\to V\) be a flat Whitney equisingular family of projective surfaces with Du Val singularities and non-negative Kodaira dimension. Recall that Du Val singularities are isolated rational hypersurface singularities and are classified by the types \(A_n, D_n, E_6, E_7, E_8\).
From the local equations of Du Val singularities, we can easily compute the Milnor sequences for these types as follows:
\begin{center}
\begin{tabular}{ |c|c|c|c| } 
\hline
Types & Milnor sequence \(\mu^{(*)}\) \\
\hline
\(A_n\) & \((n,1,1,1)\) \\ 
\(D_n\) & \((n,2,1,1)\) \\ 
\(E_n\) & \((n,2,1,1)\) \\ 
\hline
\end{tabular}
\end{center}
Considering the miniversal deformation space of Du Val singularities (c.f. \cite{KM98}*{Section 4.3}), the constancy of the Milnor numbers implies the constancy of the type of Du Val singularities. Therefore, \(f\) is Whitney equisingular if and only if each fiber of \(f\) has a fixed number of each type of singularities.

Notice that Du Val singularities are canonically resolved by the series of blow-ups. For example, the family of varieties with one \(A_5\)-singularity on every fiber is simultaneously resolved by the sequence of blow-ups along the critical loci. Consequently, if \(f\) is Whitney equisingular, then \(f\) admits a simultaneous resolution of singularities. Therefore, \(V\) is of log general type if the variation of \(f\) is maximal, directly from the original Viehweg's hyperbolicity conjecture (Theorem \ref{thm: Viehweg's hyperbolicity}).
\end{ex}

For threefolds, Gorenstein terminal singularities (i.e. isolated compound Du Val singularities) are isolated rational hypersurface singularities. Therefore, we may apply Corollary \ref{cor: Viehweg hyperbolicity for isolated hypersurface singularities}. Unlike Du Val surface singularities, the resolution of singularities is very complicated, and thus, we are far from knowing any non-trivial criteria for the existence of a simultaneous resolution.

\begin{ex}[Families of threefolds with isolated compound Du Val singularities]
\label{ex: threefold cDV}
Let \(f:U\to V\) be a flat Whitney equisingular family of projective threefolds with isolated compound Du Val singularities and non-negative Kodaira dimension. Recall that isolated compound Du Val singularities are classified by the types \(cA_n, cD_n, cE_6, cE_7, cE_8\), each type meaning that if we slice the singularity by a general hyperplane, we get a Du Val singularity of the corresponding type. Therefore, these types determine the Milnor numbers \(\mu^{(i)}\) for \(i\le 3\), leaving the Milnor number \(\mu=\mu^{(4)}\) as the only indecisive term in the Milnor sequence. Due to the result of L\^{e}-Ramanujam \cite{LR76}, which states that the constancy of the Milnor number is equivalent to the constancy of the embedded topological type, this verifies the discussion following Corollary \ref{cor: Viehweg hyperbolicity for isolated hypersurface singularities}: the family \(f\) is Whitney equisingular if every fiber of \(f\) has a fixed number of each type of cDV singularities of a fixed embedded topological type.

In particular, for a birationally non-isotrivial family \(g:X\to \P^1\) of threefolds of non-negative Kodaira dimension, whose general fiber has isolated cDV singularities, then there exist at least three fibers that are not equisingular to the general fiber. The following proposition gives a boundary example, whose fibers, except for those over \(\{-2,2,\infty\}\), have exactly one \(cA_5\)-singularity of a fixed embedded topological type.
\end{ex}

\begin{prop}
\label{prop: boundary example}
Let \(f:U\to \P^1\setminus\{-2,2,\infty\}\) be a family of degree \(6\) hypersurfaces in \(\P^4\) parametrized by
\[
\left(w^4t^2-\frac{t^6}{3}+w^4x^2+\frac{x^6}{3}+y^6+\alpha y^3z^3+z^6\right)\subset \P^4\times\P^1\setminus\{-2,2,\infty\}
\]
with coordinates \([t:x:y:z:w]\in \P^4\) and \([\alpha:1]\in \P^1\). Then \(f\) is a flat Whitney equisingular family of canonically polarized varieties with one \(cA_5\)-singularity, and the variation of \(f\) is maximal.
\end{prop}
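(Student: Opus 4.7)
The plan is to verify the claims in order: unique $cA_5$-singularity with constant Milnor sequence, canonical polarization and flatness, Whitney equisingularity via Theorem \ref{thm: Whitney equisingular, constant Milnor sequence}, and finally maximal variation, which is the main substantive step.

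\textbf{Singular locus and local type.} A direct computation of the partial derivatives of $F_\alpha$ shows that for $\alpha \in \P^1\setminus\{-2,2,\infty\}$, the unique simultaneous zero on $\{F_\alpha = 0\}$ is $p = [0{:}0{:}0{:}0{:}1]$: the equation $\partial_w F_\alpha = 4w^3(t^2+x^2) = 0$ splits into $w = 0$ (which combined with the other partials forces the excluded condition $\alpha^2 = 4$) and $x^2 = -t^2$ (which together with $\partial_t F_\alpha = \partial_x F_\alpha = 0$ forces $t = x = 0$). In the chart $w = 1$, the local equation at $p$ has quadratic part $t^2 + x^2$ and no term mixing $\{t, x\}$ with $\{y, z\}$; the splitting lemma therefore reduces the germ analytically to $uv + g_\alpha(y, z)$, where $g_\alpha(y, z) := y^6 + \alpha y^3 z^3 + z^6$. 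A generic hyperplane section $y = cz$ gives $uv + (c^6 + \alpha c^3 + 1) z^6$, an $A_5$-singularity, so the germ is of type $cA_5$. For $\alpha \neq \pm 2$, the discriminant $\alpha^2 - 4$ of $u^2 + \alpha u + 1$ (with $u = (y/z)^3$) is non-zero, so $g_\alpha$ has six distinct linear factors and its Milnor number is $(6-1)^2 = 25$; hence $\mu(uv + g_\alpha) = 25$ is independent of $\alpha$. By L\^e--Ramanujam \cite{LR76} (applicable in ambient dimension four), the embedded topological type is constant, so by Example \ref{ex: threefold cDV} the Milnor sequence at the singular point of every fiber equals $(25, 5, 1, 1, 1)$.

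\textbf{Flatness, polarization, Whitney equisingularity.} Each $X_\alpha$ is a Gorenstein degree-$6$ hypersurface in $\P^4$, so $\w_{X_\alpha} \isom \O_{X_\alpha}(1)$ is ample; and $cA_5$ is terminal Gorenstein, hence rational. Constancy of the Hilbert polynomial yields flatness. Since every fiber has exactly one singular point with the same Milnor sequence, Theorem \ref{thm: Whitney equisingular, constant Milnor sequence} implies that $f$ is Whitney equisingular.

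\textbf{Maximal variation.} Since $\dim V = 1$, it suffices to prove non-isotriviality. Any isomorphism $X_\alpha \isom X_{\alpha'}$ is induced by a linear $L \in \mathrm{PGL}(\P^4)$ that must fix the distinguished singular point $p$. Preserving the quadratic form $t^2 + x^2$ at $p$ (whose kernel is $\langle y, z \rangle$) up to scalar forces $dL_p$ to preserve the subspace $\langle y, z \rangle$, with induced actions $A \in \mathrm{CO}(2)$ on the quotient $\langle t, x \rangle$ and $B \in \mathrm{GL}_2$ on $\langle y, z \rangle$. Matching the weighted-homogeneous degree-$6$ part of the local equation (with weights $(3, 3, 1, 1)$) then forces $B$ to carry the binary sextic $g_\alpha$ to a scalar multiple of $g_{\alpha'}$. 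Consequently the genus-$2$ hyperelliptic curves $C_\alpha : s^2 = g_\alpha(y, 1)$ satisfy $C_\alpha \isom C_{\alpha'}$. The automorphism $y \mapsto \zeta_3 y$ exhibits $C_\alpha$ as a triple cover of the quotient conic $\{s^2 = u^2 + \alpha u + 1\}$ (with $u = y^3$), branched at the four points lying over $y = 0$ and $y = \infty$. A projection computation on the conic shows the cross-ratio of these four points equals $(\alpha - 2)/(\alpha + 2)$, a non-constant rational function of $\alpha$ whose $S_4$-orbit is also non-constant. Hence $C_\alpha$ varies non-trivially in moduli, so the family $\{X_\alpha\}$ is non-isotrivial, and $\mathrm{Var}(f) = 1$.

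The main obstacle is the last step: converting the global condition $X_\alpha \isom X_{\alpha'}$ into a constraint on $\alpha$. The key observations are that the unique singular point is intrinsically distinguished, that the weighted tangent cone at it is an analytic invariant carrying the binary sextic $g_\alpha$, and that the $\Z/3$-symmetry of $g_\alpha$ reduces the moduli-separation problem to an explicit cross-ratio computation.
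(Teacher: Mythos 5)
Your proposal is correct and reaches the same conclusion, but the maximal-variation step uses a genuinely different route than the paper. Where the paper quotes Teissier's ``economy of miniversal deformations'' (\cite{Teissier77}*{Theorem 4.8.4}) as a black box to conclude that distinct small values of $\alpha$ give analytically non-isomorphic germs at the unique singular point, you instead give an explicit, self-contained argument: an isomorphism $X_\alpha\isom X_{\alpha'}$ must be a projective linear map (by canonical polarization) fixing $p$, preserving the rank-$2$ quadratic tangent cone and hence its kernel $\langle y,z\rangle$, and then restriction to $\{t=x=0\}$ forces the binary sextics $g_\alpha,g_{\alpha'}$ to be $\mathrm{GL}_2$-equivalent up to scalar; you then separate them by a cross-ratio invariant built from the $\Z/3$-quotient. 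This is more elementary and makes the non-isotriviality fully explicit, at the cost of length. The initial steps (singular locus, Milnor number $25$, Whitney equisingularity via Theorem \ref{thm: Whitney equisingular, constant Milnor sequence}) track the paper's proof closely; you compute $\mu=25$ via the ordinary $6$-fold point $(6-1)^2=25$ and Thom--Sebastiani, while the paper uses Arnold's quasihomogeneous formula, both giving the same answer.

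Two places in the maximal-variation step deserve tightening. First, the descent to the $\Z/3$-quotient implicitly requires that the $\Z/3$-action on the root set of $g_\alpha$ is intrinsic, so that a $\mathrm{PGL}_2$-equivalence of the sextics conjugates the $\Z/3$-actions; this holds because for $\alpha^2\neq 4$ the $\mathrm{PGL}_2$-stabilizer of the six roots $\{\zeta_3^j r^{\pm 1}\}$ is the dihedral group $S_3$ generated by $y\mapsto\zeta_3 y$ and $y\mapsto 1/y$, whose $\Z/3$ is the unique $3$-Sylow, but you should say so. Second, you could avoid the quotient-conic detour entirely: a direct cross-ratio of four of the six roots, e.g.\ $\chi(r,\zeta_3 r,\zeta_3^2 r,1/r)=(1+\zeta_3^{-1})\frac{\zeta_3 r^2-1}{r^2-1}$, already varies non-constantly with $r$ (hence with $\alpha$), which removes the need to justify the descent at all. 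Your stated cross-ratio $(\alpha-2)/(\alpha+2)$ for the four branch points on the conic is in fact correct (parametrizing from $(0,1)$ one finds parameters $\{2/\alpha,0,1,-1\}$), but the proposal leaves the verification to the reader.
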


\begin{proof}
It is easy to check that the fiber \(U_\alpha\subset \P^4\) has an isolated \(cA_5\)-singularity at \([0:0:0:0:1]\in \P^4\). In addition, the Milnor number of the hypersurface singularity \(t^2+x^2+y^6+\alpha y^3z^3+z^6\) at the origin is \(25\), for all \(\alpha\neq-2,2\). See Arnold et al. \cite{Arnold12}*{p.200 Corollary 3} for the computation of the Milnor number of a quasihomogeneous function. Therefore, \(f\) is a flat Whitney equisingular morphism.

It remains to prove that \(f\) is of maximal variation. Since the fibers of \(f\) are canonically polarized, it suffices to show that two general fibers of \(f\) are not isomorphic. By Teissier's ``economy of miniversal deformations" \cite{Teissier77}*{Theorem 4.8.4}, no other small deformation of the isolated hypersurface singularity, \(t^2+x^2+y^6+\alpha y^3z^3+z^6\) at the origin, is analytically isomorphic to the original. Since each fiber of \(f\) has exactly one isolated singularity of this form, two general fibers of \(f\) are not isomorphic to each other, which concludes the proof.
\end{proof}

\begin{bibdiv}
    \begin{biblist}

\bib{Arnold12}{book}{
   author={Arnold, V. I.},
   author={Gusein-Zade, S. M.},
   author={Varchenko, A. N.},
   title={Singularities of differentiable maps. Volume 1},
   series={Modern Birkh\"{a}user Classics},
   note={Classification of critical points, caustics and wave fronts;
   Translated from the Russian by Ian Porteous based on a previous
   translation by Mark Reynolds;
   Reprint of the 1985 edition},
   publisher={Birkh\"{a}user/Springer, New York},
   date={2012},
   pages={xii+382},
   isbn={978-0-8176-8339-9},
   review={\MR{2896292}},
}

\bib{BBD}{article}{
   author={Be\u{\i}linson, A. A.},
   author={Bernstein, J.},
   author={Deligne, P.},
   title={Faisceaux pervers},
   language={French},
   conference={
      title={Analysis and topology on singular spaces, I},
      address={Luminy},
      date={1981},
   },
   book={
      series={Ast\'{e}risque},
      volume={100},
      publisher={Soc. Math. France, Paris},
   },
   date={1982},
   pages={5--171},
   review={\MR{751966}},
}    
    
\bib{BCHM}{article}{
   author={Birkar, Caucher},
   author={Cascini, Paolo},
   author={Hacon, Christopher D.},
   author={McKernan, James},
   title={Existence of minimal models for varieties of log general type},
   journal={J. Amer. Math. Soc.},
   volume={23},
   date={2010},
   number={2},
   pages={405--468},
   issn={0894-0347},
   review={\MR{2601039}},
   doi={10.1090/S0894-0347-09-00649-3},
}

\bib{BS76}{article}{
   author={Brian\c{c}on, J.},
   author={Speder, J.-P.},
   title={Les conditions de Whitney impliquent ``$\mu (^{\ast} )$
   constant''},
   language={French, with English summary},
   journal={Ann. Inst. Fourier (Grenoble)},
   volume={26},
   date={1976},
   number={2},
   pages={xi, 153--163},
   issn={0373-0956},
   review={\MR{419825}},
}

\bib{Brunebarbe18}{article}{
   author={Brunebarbe, Yohan},
   title={Symmetric differentials and variations of Hodge structures},
   journal={J. Reine Angew. Math.},
   volume={743},
   date={2018},
   pages={133--161},
   issn={0075-4102},
   review={\MR{3859271}},
   doi={10.1515/crelle-2015-0109},
}

\bib{CP19}{article}{
   author={Campana, Fr\'{e}d\'{e}ric},
   author={P\u{a}un, Mihai},
   title={Foliations with positive slopes and birational stability of
   orbifold cotangent bundles},
   journal={Publ. Math. Inst. Hautes \'{E}tudes Sci.},
   volume={129},
   date={2019},
   pages={1--49},
   issn={0073-8301},
   review={\MR{3949026}},
   doi={10.1007/s10240-019-00105-w},
}

\bib{HM83}{article}{
   author={Henry, J.-P. G.},
   author={Merle, M.},
   title={Limites de normales, conditions de Whitney et \'{e}clatement
   d'Hironaka},
   language={French},
   conference={
      title={Singularities, Part 1},
      address={Arcata, Calif.},
      date={1981},
   },
   book={
      series={Proc. Sympos. Pure Math.},
      volume={40},
      publisher={Amer. Math. Soc., Providence, RI},
   },
   date={1983},
   pages={575--584},
   review={\MR{713094}},
}

\bib{HTT}{book}{
   author={Hotta, Ryoshi},
   author={Takeuchi, Kiyoshi},
   author={Tanisaki, Toshiyuki},
   title={$D$-modules, perverse sheaves, and representation theory},
   series={Progress in Mathematics},
   volume={236},
   note={Translated from the 1995 Japanese edition by Takeuchi},
   publisher={Birkh\"{a}user Boston, Inc., Boston, MA},
   date={2008},
   pages={xii+407},
   isbn={978-0-8176-4363-8},
   review={\MR{2357361}},
   doi={10.1007/978-0-8176-4523-6},
}

\bib{KS90}{book}{
   author={Kashiwara, Masaki},
   author={Schapira, Pierre},
   title={Sheaves on manifolds},
   series={Grundlehren der mathematischen Wissenschaften [Fundamental
   Principles of Mathematical Sciences]},
   volume={292},
   note={With a chapter in French by Christian Houzel},
   publisher={Springer-Verlag, Berlin},
   date={1990},
   pages={x+512},
   isbn={3-540-51861-4},
   review={\MR{1074006}},
   doi={10.1007/978-3-662-02661-8},
}

\bib{KK08}{article}{
   author={Kebekus, Stefan},
   author={Kov\'{a}cs, S\'{a}ndor J.},
   title={Families of canonically polarized varieties over surfaces},
   journal={Invent. Math.},
   volume={172},
   date={2008},
   number={3},
   pages={657--682},
   issn={0020-9910},
   review={\MR{2393082}},
   doi={10.1007/s00222-008-0128-8},
}

\bib{KK10}{article}{
   author={Kebekus, Stefan},
   author={Kov\'{a}cs, S\'{a}ndor J.},
   title={The structure of surfaces and threefolds mapping to the moduli
   stack of canonically polarized varieties},
   journal={Duke Math. J.},
   volume={155},
   date={2010},
   number={1},
   pages={1--33},
   issn={0012-7094},
   review={\MR{2730371}},
   doi={10.1215/00127094-2010-049},
}

\bib{Kollar21}{book}{
   author={Koll\'{a}r, J\'{a}nos},
   title={Families of varieties of general type},
   series={book in preparation},
   date={2022},
}

\bib{KM98}{book}{
   author={Koll\'{a}r, J\'{a}nos},
   author={Mori, Shigefumi},
   title={Birational geometry of algebraic varieties},
   series={Cambridge Tracts in Mathematics},
   volume={134},
   note={With the collaboration of C. H. Clemens and A. Corti;
   Translated from the 1998 Japanese original},
   publisher={Cambridge University Press, Cambridge},
   date={1998},
   pages={viii+254},
   isbn={0-521-63277-3},
   review={\MR{1658959}},
   doi={10.1017/CBO9780511662560},
}

\bib{Kovacs02}{article}{
   author={Kov\'{a}cs, S\'{a}ndor J.},
   title={Logarithmic vanishing theorems and Arakelov-Parshin boundedness
   for singular varieties},
   journal={Compositio Math.},
   volume={131},
   date={2002},
   number={3},
   pages={291--317},
   issn={0010-437X},
   review={\MR{1905025}},
   doi={10.1023/A:1015592420937},
}

\bib{KT21}{article}{
   author={Kov\'{a}cs, S\'{a}ndor J.},
   author={Taji, Behrouz},
   title={Hodge sheaves underlying flat projective families},
   journal={preprint arXiv:2103.03515},
   date={2021},
}

\bib{Laufer83}{article}{
   author={Laufer, Henry B.},
   title={Weak simultaneous resolution for deformations of Gorenstein
   surface singularities},
   conference={
      title={Singularities, Part 2},
      address={Arcata, Calif.},
      date={1981},
   },
   book={
      series={Proc. Sympos. Pure Math.},
      volume={40},
      publisher={Amer. Math. Soc., Providence, R.I.},
   },
   date={1983},
   pages={1--29},
   review={\MR{713236}},
}

\bib{Laufer87}{article}{
   author={Laufer, Henry B.},
   title={Strong simultaneous resolution for surface singularities},
   conference={
      title={Complex analytic singularities},
   },
   book={
      series={Adv. Stud. Pure Math.},
      volume={8},
      publisher={North-Holland, Amsterdam},
   },
   date={1987},
   pages={207--214},
   review={\MR{894294}},
   doi={10.2969/aspm/00810207},
}

\bib{LR76}{article}{
   author={L\^{e} D\~{u}ng Tr\'{a}ng},
   author={Ramanujam, C. P.},
   title={The invariance of Milnor's number implies the invariance of the
   topological type},
   journal={Amer. J. Math.},
   volume={98},
   date={1976},
   number={1},
   pages={67--78},
   issn={0002-9327},
   review={\MR{399088}},
   doi={10.2307/2373614},
}

\bib{Lipman00}{article}{
   author={Lipman, Joseph},
   title={Equisingularity and simultaneous resolution of singularities},
   conference={
      title={Resolution of singularities},
      address={Obergurgl},
      date={1997},
   },
   book={
      series={Progr. Math.},
      volume={181},
      publisher={Birkh\"{a}user, Basel},
   },
   date={2000},
   pages={485--505},
   review={\MR{1748631}},
   doi={10.1007/978-3-0348-8399-3-17},
}

\bib{Mather12}{article}{
   author={Mather, John},
   title={Notes on topological stability},
   journal={Bull. Amer. Math. Soc. (N.S.)},
   volume={49},
   date={2012},
   number={4},
   pages={475--506},
   issn={0273-0979},
   review={\MR{2958928}},
   doi={10.1090/S0273-0979-2012-01383-6},
}

\bib{Mori87}{article}{
   author={Mori, Shigefumi},
   title={Classification of higher-dimensional varieties},
   conference={
      title={Algebraic geometry, Bowdoin, 1985},
      address={Brunswick, Maine},
      date={1985},
   },
   book={
      series={Proc. Sympos. Pure Math.},
      volume={46},
      publisher={Amer. Math. Soc., Providence, RI},
   },
   date={1987},
   pages={269--331},
   review={\MR{927961}},
}

\bib{PS17}{article}{
   author={Popa, Mihnea},
   author={Schnell, Christian},
   title={Viehweg's hyperbolicity conjecture for families with maximal
   variation},
   journal={Invent. Math.},
   volume={208},
   date={2017},
   number={3},
   pages={677--713},
   issn={0020-9910},
   review={\MR{3648973}},
   doi={10.1007/s00222-016-0698-9},
}

\bib{PW16}{article}{
   author={Popa, Mihnea},
   author={Wu, Lei},
   title={Weak positivity for Hodge modules},
   journal={Math. Res. Lett.},
   volume={23},
   date={2016},
   number={4},
   pages={1139--1155},
   issn={1073-2780},
   review={\MR{3554504}},
   doi={10.4310/MRL.2016.v23.n4.a8},
}

\bib{Saito88}{article}{
   author={Saito, Morihiko},
   title={Modules de Hodge polarisables},
   language={French},
   journal={Publ. Res. Inst. Math. Sci.},
   volume={24},
   date={1988},
   number={6},
   pages={849--995 (1989)},
   issn={0034-5318},
   review={\MR{1000123}},
   doi={10.2977/prims/1195173930},
}

\bib{Saito90}{article}{
   author={Saito, Morihiko},
   title={Mixed Hodge modules},
   journal={Publ. Res. Inst. Math. Sci.},
   volume={26},
   date={1990},
   number={2},
   pages={221--333},
   issn={0034-5318},
   review={\MR{1047415}},
   doi={10.2977/prims/1195171082},
}

\bib{Saito91}{article}{
   author={Saito, Morihiko},
   title={On Koll\'{a}r's conjecture},
   conference={
      title={Several complex variables and complex geometry, Part 2},
      address={Santa Cruz, CA},
      date={1989},
   },
   book={
      series={Proc. Sympos. Pure Math.},
      volume={52},
      publisher={Amer. Math. Soc., Providence, RI},
   },
   date={1991},
   pages={509--517},
   review={\MR{1128566}},
}

\bib{Schurmann03}{book}{
   author={Sch\"{u}rmann, J\"{o}rg},
   title={Topology of singular spaces and constructible sheaves},
   series={Instytut Matematyczny Polskiej Akademii Nauk. Monografie
   Matematyczne (New Series) [Mathematics Institute of the Polish Academy of
   Sciences. Mathematical Monographs (New Series)]},
   volume={63},
   publisher={Birkh\"{a}user Verlag, Basel},
   date={2003},
   pages={x+452},
   isbn={3-7643-2189-X},
   review={\MR{2031639}},
   doi={10.1007/978-3-0348-8061-9},
}

\bib{Teissier73}{article}{
   author={Teissier, Bernard},
   title={Cycles \'{e}vanescents, sections planes et conditions de Whitney},
   language={French},
   conference={
      title={Singularit\'{e}s \`a Carg\`ese},
      address={Rencontre Singularit\'{e}s G\'{e}om. Anal., Inst. \'{E}tudes Sci.,
      Carg\`ese},
      date={1972},
   },
   book={
      publisher={Soc. Math. France, Paris},
   },
   date={1973},
   pages={285--362. Ast\'{e}risque, Nos. 7 et 8},
   review={\MR{0374482}},
}

\bib{Teissier75}{article}{
   author={Teissier, Bernard},
   title={Introduction to equisingularity problems},
   conference={
      title={Algebraic geometry},
      address={Proc. Sympos. Pure Math., Vol. 29, Humboldt State Univ.,
      Arcata, Calif.},
      date={1974},
   },
   book={
      publisher={Amer. Math. Soc., Providence, R.I.},
   },
   date={1975},
   pages={593--632},
   review={\MR{0422256}},
}

\bib{Teissier77}{article}{
   author={Teissier, Bernard},
   title={The hunting of invariants in the geometry of discriminants},
   conference={
      title={Real and complex singularities},
      address={Proc. Ninth Nordic Summer School/NAVF Sympos. Math., Oslo},
      date={1976},
   },
   book={
      publisher={Sijthoff and Noordhoff, Alphen aan den Rijn},
   },
   date={1977},
   pages={565--678},
   review={\MR{0568901}},
}

\bib{Teissier80}{article}{
   author={Teissier, Bernard},
   title={Résolution simultanée II},
   language={French},
   conference={
      title={Séminaire sur les Singularités des Surfaces},
      date={1976--1977},
   },
   book={
      series={Lecture Notes in Math.},
      volume={777},
      publisher={Springer, Berlin},
   },
   date={1980},
   pages={82--146},
   doi={10.1007/BFb0085881},
}

\bib{Teissier82}{article}{
   author={Teissier, Bernard},
   title={Vari\'{e}t\'{e}s polaires. II. Multiplicit\'{e}s polaires, sections planes, et
   conditions de Whitney},
   language={French},
   conference={
      title={Algebraic geometry},
      address={La R\'{a}bida},
      date={1981},
   },
   book={
      series={Lecture Notes in Math.},
      volume={961},
      publisher={Springer, Berlin},
   },
   date={1982},
   pages={314--491},
   review={\MR{708342}},
   doi={10.1007/BFb0071291},
}

\bib{Thom69}{article}{
   author={Thom, R.},
   title={Ensembles et morphismes stratifi\'{e}s},
   language={French},
   journal={Bull. Amer. Math. Soc.},
   volume={75},
   date={1969},
   pages={240--284},
   issn={0002-9904},
   review={\MR{239613}},
   doi={10.1090/S0002-9904-1969-12138-5},
}

\bib{Trotman89}{article}{
   author={Trotman, David},
   title={Une version microlocale de la condition $(w)$ de Verdier},
   language={French, with English summary},
   journal={Ann. Inst. Fourier (Grenoble)},
   volume={39},
   date={1989},
   number={3},
   pages={825--829},
   issn={0373-0956},
   review={\MR{1030852}},
}

\bib{VZ01}{article}{
   author={Viehweg, Eckart},
   author={Zuo, Kang},
   title={On the isotriviality of families of projective manifolds over
   curves},
   journal={J. Algebraic Geom.},
   volume={10},
   date={2001},
   number={4},
   pages={781--799},
   issn={1056-3911},
   review={\MR{1838979}},
}

\bib{VZ02}{article}{
   author={Viehweg, Eckart},
   author={Zuo, Kang},
   title={Base spaces of non-isotrivial families of smooth minimal models},
   conference={
      title={Complex geometry},
      address={G\"{o}ttingen},
      date={2000},
   },
   book={
      publisher={Springer, Berlin},
   },
   date={2002},
   pages={279--328},
   review={\MR{1922109}},
}

\bib{Whitney65}{article}{
   author={Whitney, Hassler},
   title={Tangents to an analytic variety},
   journal={Ann. of Math. (2)},
   volume={81},
   date={1965},
   pages={496--549},
   issn={0003-486X},
   review={\MR{192520}},
   doi={10.2307/1970400},
}

\bib{Zuo00}{article}{
   author={Zuo, Kang},
   title={On the negativity of kernels of Kodaira-Spencer maps on Hodge
   bundles and applications},
   note={Kodaira's issue},
   journal={Asian J. Math.},
   volume={4},
   date={2000},
   number={1},
   pages={279--301},
   issn={1093-6106},
   review={\MR{1803724}},
   doi={10.4310/AJM.2000.v4.n1.a17},
}

    \end{biblist}
\end{bibdiv}

\end{document}